\theoremstyle{plain}
\newtheorem{thm}{Theorem}[section]
\theoremstyle{plain}
\newtheorem{prop}[thm]{Proposition}
\newtheorem{lem}[thm]{Lemma}
\newtheorem{cor}[thm]{Corollary}
\theoremstyle{definition}
\newtheorem{rem}[thm]{Remark}
\theoremstyle{example}
\newcommand*{\B}{\mathcal{B}}
\newcommand*{\I}{\mathcal{I}}
\newcommand*{\J}{\mathcal{J}}
\newcommand*{\T}{\mathcal{T}}
\newcommand*{\al}{\alpha}
\newcommand*{\card}{\#}
\newcommand*{\ZZ}{\mathbb{Z}}
\newtheorem{lemma}[thm]{Lemma}
\DeclareMathOperator{\rint}{rint}
\newcommand*{\uR}{\underline{\mathbb{R}}}
\newcommand*{\oS}{\overline{S}}
\newcommand*{\ve}{\varepsilon}
\newcommand*{\de}{\delta}
\newcommand*{\ff}{\varphi}
\newcommand*{\RR}{{\mathbb{R}}}
\newcommand*{\CC}{{\mathbb{C}}}
\def\car{\mathbf{1}}
\newcommand*{\bR}{\mathbb{R}}
\newcommand*{\zz}{\mathbf{z}}
\newcommand*{\vv}{\mathbf{v}}
\newcommand*{\uu}{\mathbf{u}}
\newcommand*{\mv}{\mathbf{m}}
\newcommand*{\ww}{\mathbf{w}}
\newcommand*{\mol}{\overline{m}}
\newcommand*{\mul}{\underline{m}}
\newcommand{\ol}[1]{\overline{#1}}
\newcommand*{\yy}{\mathbf{y}}
\newcommand*{\ee}{\mathbf{e}}
\newcommand*{\xx}{\mathbf{x}}
\newcommand*{\TT}{\mathbb{T}}
\newcommand*{\NN}{\mathbb{N}}
\def\downto{\downarrow}
\def\upto{\uparrow}
\def\calX{\mathcal{X}}
\def\veps{\varepsilon}
\newenvironment{abc}{\begin{enumerate}[{\rm (a)}]}{\end{enumerate}}
\newenvironment{num}{\begin{enumerate}[{\rm 1.}]}{\end{enumerate}}
\def\arxivfn{The numbering refers to the current arxiv version.}
\long\def\blue#1{{\color{blue}#1}}
\def\oSS{\oS}
\begin{document}

\title[Fenton type minimax problems]{Fenton type minimax problems for \\ sum of translates functions}

\author{Bálint Farkas, Béla Nagy and Szilárd Révész}

\date{}

\begin{abstract}
Following an insightful work of P.{} Fenton, we investigate sum of translates functions $F(\xx,t):=J(t)+\sum_{j=1}^n \nu_j K(t-x_j)$, where $J:[0,1]\to \uR:=\RR\cup\{-\infty\}$ is a ``sufficiently non-degenerate'' and upper-bounded ``field function'', and $K:[-1,1]\to \uR$ is a fixed ``kernel function'', concave both on $(-1,0)$ and $(0,1)$, $\xx:=(x_1,\ldots,x_n)$ with $0\le x_1\le\dots\le x_n\le 1$, and $\nu_1,\dots,\nu_n>0$ are fixed. We analyze the behavior of the local maxima vector $\mv:=(m_0,m_1,\ldots,m_n)$, where $m_j:=m_j(\xx):=\sup_{x_j\le t\le x_{j+1}} F(\xx,t)$, with $x_0:=0$, $x_{n+1}:=1$; and study the optimization (minimax and maximin) problems $\inf_{\xx}\max_j m_j(\xx)$ and $\sup_{\xx}\min_j m_j(\xx)$.  The main result is the equality of these quantities, and provided $J$ is upper semicontinuous, the existence of extremal configurations and their description as equioscillation points $\ww$, i.e., $\ww$ satisfying $m_0(\ww)=m_1(\ww)=\cdots=m_n(\ww)$.  In our previous papers we obtained results for the case of singular kernels, i.e., when $K(0)=-\infty$ and the field was assumed to be upper semicontinuous. In this work  we get rid of these assumptions and prove  common generalizations of Fenton's and our previous results, and arrive at the greatest generality in the setting of concave kernel functions.

\smallskip

Keywords: minimax and maximin problems, kernel function, sum of translates function, vector of local maxima,  equioscillation, majorization

2020 Mathematics subject classification: 26A51, 26D07, 49K35
\end{abstract}

\maketitle

\section{Introduction}\label{sec:Introduction}
The very notion of the sum of translates functions originates from an ingenious paper of Fenton \cite{Fenton}, who himself worked out results on them for use in his work \cite{FentonEnt} proving a conjecture of P.D.{} Barry. Later he found other applications of his method, see \cite{FentonCos, FentonCos2}. Inspired by Fenton, in \cite{TLMS2018, Homeo,Minimax} we analyzed interval maxima vectors of sum of translates functions and found variants of Fenton's minimax results.
For the origins and wide range of applications, of the approach, ranging from the strong polarization problem to moving node Hermite-Fejér interpolation, Chebyshev constants and Bojanov theorems we refer the reader to the papers \cite{Homeo,Minimax, TLMS2018,Ural}. Before going to the results we present the setting of the problem in detail.

A function $K:(-1,0)\cup (0,1)\to \bR$ will be called a \emph{kernel function} if it is concave on $(-1,0)$ and on $(0,1)$, and if it satisfies
\begin{equation}\label{eq:Kzero}
\lim_{t\downto 0} K(t) =\lim_{t\upto 0} K(t).
\end{equation}
By the concavity assumption these limits exist, and a kernel function has one-sided limits also at $-1$ and $1$. We set
\begin{equation*}
K(0):=\lim_{t\to 0}K(t),\quad K(-1):=\lim_{t\downto -1} K(t) \quad\text{and}\quad K(1):=\lim_{t\upto 1} K(t).
\end{equation*}
We note explicitly that we thus obtain the extended continuous function $K:[-1,1]\to \bR\cup\{-\infty\}=:\uR$, and that we have $\sup K<\infty$. Also note that a kernel function is almost everywhere differentiable.

A kernel function $K$ is called \emph{singular} if
\begin{equation}\label{cond:infty}
\tag{$\infty$}
K(0)=-\infty.
\end{equation}

We say that the kernel function $K$ is \emph{strictly concave} if it is strictly concave on both of the intervals $(-1,0)$ and $(0,1)$.

Further, we call it \emph{monotone}\footnote{These conditions---and more, like $C^2$ smoothness and strictly negative second derivatives---were assumed on the kernel functions in the ground-breaking paper of Fenton \cite{Fenton}.} if
\begin{equation}
\label{cond:monotone}\tag{M}
K \text{ is non-increasing on } (-1,0) \text{ and non-decreasing on } (0,1).
\end{equation}
By concavity, under the monotonicity condition \eqref{cond:monotone} the endpoint values $K(-1), K(1)$ are also finite. If $K$ is strictly concave, then \eqref{cond:monotone} implies \emph{strict monotonicity}:
\begin{equation}
\label{cond:smonotone}\tag{SM}
K \text{ is strictly decreasing on } [-1,0) \text{ and strictly increasing on } (0,1],
\end{equation}
where we have extended the assertion to the finite endpoint values, too.

Let $n\in \NN=\{1,2,\dots,\}$ be fixed. We will call a function $J:[0,1]\to\uR$ an \emph{(external) $n$-field function}\footnote{Again, the terminology of kernels and fields came to our mind by analogy, which in case of the logarithmic kernel $K(t):=\log|t|$ and an external field $J(t)$ arising from a weight $w(t):=\exp(J(t))$ are indeed discussed in logarithmic potential theory. However, in our analysis no further potential theoretic notions and tools will be applied. This is so in particular because our analysis is far more general, allowing different and almost arbitrary kernels and fields; yet the resemblance to the classical settings of logarithmic potential theory should not be denied.}, or---if the value of $n$ is unambiguous from the context---simply a \emph{field} or \emph{field function}, if it is bounded above  on $[0,1]$, and it assumes finite values at more than  $n$ different points, where we count the points $0$ and $1$ with weight\footnote{The weighted counting makes a difference only for the case when $J^{-1}(\{-\infty\})$ contains the two endpoints; with only $n-1$ further interior points in $(0,1)$ the weights in this configuration add up to $n$ only, hence such $J$ are considered inadmissible for being an $n$-field function.} $1/2$ only, while the points in $(0,1)$ are accounted for with weight $1$.
Furthermore, for a field function $J$ we define
its \emph{singularity set} $X$, with complement $X^c$ called the \emph{finiteness domain}, by
\begin{equation}\label{eq:Xdef}
X:=X_J:=J^{-1}(\{-\infty\})\quad\text{and}\quad X^c:=[0,1]\setminus X=J^{-1}(\RR).
\end{equation}
Then $X^c$ has cardinality exceeding $n$ (in the above described, weighted sense), in particular $X\neq [0,1]$.
Let us point out that striving for generality in regard of $J$, in particular as for the almost arbitrariness of the finiteness domain $X^c$, is important in various applications. In \cite{Homeo} we discussed applications where the interpolation type results required to have $X^c$ minimal (i.e. $n+1$ points only) and also applications to Chebyshev constants of compact, perfect sets $E\subset [0,1]$, corresponding to the case $J=\log\car_E$ and $X^c=E$.

\medskip  Throughout this work we consider the \emph{open simplex}
\[
S:=S_n:=\{\yy : \yy=(y_1,\dots,y_n)\in (0,1)^n,\: 0< y_1<\cdots <y_n<1\},
\]
and its closure the \emph{closed simplex}
\[
\overline{S}:=\{\yy: \yy\in [0,1]^n,\: 0\leq y_1\leq \cdots \leq y_n\leq 1\},
\]
whose elements $\yy=(y_1,\dots,y_n)$  will be called node systems, and $y_1,\dots, y_n$ nodes. We use the norm $\|\xx\|:=\max\{|x_1|,\dots,|x_n|\}$.

\medskip
For given $n\in \NN$, a kernel function $K$, constants $\nu_j>0 ~ (j=1,\ldots,n)$  we define the \emph{pure sum of translates function}
\begin{equation}\label{eq:puresum}
f(\yy,t):=\sum_{j=1}^n \nu_j K(t-y_j)\quad (\yy\in \overline{S},\: t\in [0,1]),
\end{equation}
and for a further given $n$-field function $J$ also the \emph{(weighted) sum of translates function}
\begin{equation}\label{eq:Fsum}
F(\yy,t):=J(t)+\sum_{j=1}^n \nu_jK(t-y_j)=J(t)+f(\yy,t)\quad (\yy\in \overline{S},\: t\in [0,1]).
\end{equation}
More generally, if $K_1,\dots, K_n:[-1,1]\to \uR$ and $J:[0,1]\to \uR$ are given functions (not necessarily kernels) the corresponding \emph{generalized sum of translates function} is defined as
\begin{equation}\label{eq:Fsumgen}
F(\yy,t):=J(t)+\sum_{j=1}^n K_j(t-y_j)\quad (\yy\in \overline{S},\: t\in [0,1]).
\end{equation}

Note that the functions $J, K$ can take the value $-\infty$, but not $+\infty$, therefore the sum of translates functions can be defined meaningfully.
Furthermore, $f:\oS \times [0,1] \to \uR$ is  continuous in the extended sense; and it is also continuous (in the usual sense) provided $K$ is non-singular. The  function $F(\yy,\cdot)$ is not constant $-\infty$, hence\footnote{These require some careful considerations and the assumed degree of non-singularity of $J$ is in fact \emph{the exact condition} to ensure $F \not\equiv -\infty$. For details see \cite{Homeo,Minimax}.} $\sup_{t\in[0,1]}F(\yy,t)>-\infty$ for every $\yy\in \oS$.
Moreover, $F$ is upper semicontinuous on $\oS\times [0,1]$ provided $J$ has this property.

The \emph{singularity set} of $F(\yy,\cdot)$ is
\[
\widehat{X}(\yy):=\{t\in[0,1]~:~F(\yy,t)=-\infty\} \varsubsetneq [0,1].
\]
Accordingly, an interval $I\subseteq [0,1]$ with $I\subseteq \widehat{X}(\yy)$ will be called \emph{singular}.  In case $K$ (hence $f$) is finite valued (e.g., when $K$ is non-singular and monotone), we necessarily have $\widehat{X}(\yy)=X_J$ for each $\yy\in \oS$, and the notion of singularity of intervals, hence of node systems, becomes totally independent of the kernel $K$ itself.

\medskip Writing $y_0:=0$ and $y_{n+1}:=1$  we set for each $\yy\in \overline{S}$ and $j\in \{0,1,\dots, n\}$
\begin{align*}
I_j(\yy)&:=[y_j,y_{j+1}],
\qquad m_j(\yy):=\sup_{t\in I_j(\yy)} F(\yy,t),
\end{align*}
and
\begin{align*}
\mol(\yy)&:=\max_{j=0,\dots,n} m_j(\yy)=\sup_{t\in [0,1]}F(\yy,t),
\qquad \mul(\yy):=\min_{j=0,\dots,n} m_j(\yy).
\end{align*}
Further, we also define
\begin{equation*}
	M(\oSS):=\inf_{\yy\in \oS} \mol(\yy)\quad\text{and}\quad m(\oSS):=\sup_{\yy\in \oS} \mul(\yy).
\end{equation*}

As has been said above, for each $\yy\in \oS$ we have that $\mol(\yy)=\sup_{t \in [0,1]} F(\yy,t) \in \RR$ is finite.
Observe that an interval $I\subseteq [0,1]$ is contained in $\widehat{X}(\yy)$, i.e., $I$ is singular, if and only if $F(\yy,\cdot)|_I\equiv -\infty$. In particular $m_j(\yy)=-\infty$ exactly when $I_j(\yy)\subseteq \widehat{X}(\yy)$. A node system $\yy$ is called \emph{singular} if there is $j\in \{0,1,\dots,n\}$ with $I_j(\yy)$ singular, i.e., $m_j(\yy)=-\infty$; and a node system $\yy\in \partial S= \oS\setminus S$ is called \emph{degenerate}.

An essential role is played by the \emph{regularity set}
\begin{align}\label{eq:Ydef}
Y&:=\{\yy\in \oS: \text{$\yy$ is non-singular}\}\notag \\
&=\{\yy\in \oS: \text{$I_j(\yy)\not\subseteq \widehat{X}(\yy)$ for $j=0,1,\dots,n$}\}\\
&=\{\yy\in \oS: \text{$m_j(\yy)\neq-\infty$ for $j=0,1,\dots,n$}\}.\notag
\end{align}
If the kernel $K$ is singular, then all degenerate node systems are  singular, hence $Y\subset S$. Note also that we have $S\subset Y$ if and only if $X$ has empty interior. So if $K$ is singular and $X$ has empty interior, then $Y=S$.

We also introduce the \emph{interval maxima vector function}
\[
\mv(\ww):=(m_0(\ww),m_1(\ww),\ldots,m_n(\ww)) \in \uR^{n+1} \quad (\ww \in \oS).
\]
From the above it follows that for $\ww\in \oS$ we have $\mv(\ww)\ne (-\infty,\ldots,-\infty)$.

To recall the fundamental result of Fenton, we need to present another assumption, introduced by Fenton \cite{Fenton}: The ``cusp condition'' requires that ($K$ is differentiable and) $\lim_{t \upto 0} K'(t)=-\infty$, $\lim_{t \downto 0} K'(t)=+\infty$. In order to extend investigations to not necessarily differentiable functions, we reformulate this condition in the following way:
\begin{equation}\label{cond:inftyprime}
\tag{$\infty'_\pm$}
\lim_{t,x\upto 0}\frac{K(t)-K(x)}{t-x} =-\infty
\qquad \textrm{ and } \qquad
\lim_{t,x\downto 0}\frac{K(t)-K(x)}{t-x}=\infty.
\end{equation}
Observe that the singularity  condition \eqref{cond:infty} implies, by concavity, \eqref{cond:inftyprime}, too. In the above setting Fenton's result takes the following form.

\begin{thm}[Fenton]\label{th:Fenton}
Let the field $J:(0,1)\rightarrow \bR$ be concave, and the kernel $K$ be monotone \eqref{cond:monotone}, strictly concave and $C^2$, with $K''<0$ on $(-1,0)\cup(0,1)$ and satisfying the cusp condition \eqref{cond:inftyprime}.

Take $\nu_1=\ldots=\nu_n$. Then for the sum of translates function defined in \eqref{eq:Fsum}  there exists a unique
minimax node system $\ww:=(w_1,\ldots,w_n)$ in the open simplex $S$:
\begin{equation}\label{eq:minimaxpt}
M(\oSS)=\mol(\ww).
\end{equation}
Moreover, $F(\ww,\cdot)$ equioscillates on the intervals $I_j(\ww)=[w_j,w_{j+1}]$, meaning that
\begin{equation}\label{eq:defekvp}
m_j(\ww):=\sup_{[w_j,w_{j+1}]}F(\ww,\cdot)=\sup_{[w_i,w_{i+1}]}F(\ww,\cdot)=:m_i(\ww) \quad (0\le i,j\le n),
\end{equation}
the point $\ww$ is the unique point with this property,  and it is also the unique maximin point with $\mul(\ww)=m(\oSS)$.
\end{thm}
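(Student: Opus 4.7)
My plan is the classical Chebyshev-type approach: establish existence of a minimizer of $\mol$ on the closed simplex $\oS$, rule out boundary (degenerate) minimizers via the cusp condition, derive equioscillation as a first-order necessary condition at an interior minimizer, prove uniqueness by exploiting strict concavity and smoothness of $K$, and finally conclude $M(\oS)=m(\oS)$ from the coincidence of the minimax and maximin values at the common equioscillation point.

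\emph{Existence and non-degeneracy.} I would first show that $\mol:\oS\to\RR$ is continuous. Since $K:[-1,1]\to\uR$ is continuous, $f(\yy,t)$ is jointly continuous on $\oS\times[0,1]$ into $\uR$; together with upper semicontinuity of the concave field $J$ on $(0,1)$, the function $F(\yy,\cdot)$ is upper semicontinuous on $[0,1]$ and attains its sup there, and joint continuity of $f$ plus compactness of $t$ yield continuity of $\yy\mapsto\mol(\yy)$ on $\oS$. Compactness of $\oS$ then provides a minimizer $\ww^0$. To force $\ww^0\in S$, assume for contradiction that two nodes coincide or a node sits at $0$ or $1$, and perturb by a small $\veps>0$ to open up the collapsed configuration. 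The unbounded one-sided derivatives in the cusp condition~\eqref{cond:inftyprime} force a strictly downward first-order change in the neighboring interval maxima, contradicting minimality.

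\emph{Equioscillation.} At the interior minimizer $\ww\in S$, I would use a first-variation argument based on the monotonicity of $K$: raising $w_j$ strictly decreases $K(t-w_j)$ for $t>w_j$ and strictly increases it for $t<w_j$. Combined with $C^2$-smoothness, strict concavity, and the cusp condition (which pins the unique maximum point of $F(\ww,\cdot)$ in each interval $I_j(\ww)$ strictly inside $(w_j,w_{j+1})$), this shows that $m_j(\yy)$ is strictly decreasing and $m_{j-1}(\yy)$ strictly increasing as a function of $w_j$ at $\ww$. Hence if some $m_j(\ww)<\mol(\ww)$, moving a node adjacent to a maximal interval towards that interval strictly decreases the largest $m_k(\ww)$, contradicting minimality; iterating gives equioscillation $m_0(\ww)=\cdots=m_n(\ww)$.

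\emph{Uniqueness and minimax = maximin.} Let $\ww,\ww'\in S$ be two equioscillation points. I would run a Chebyshev-style comparison: assuming $\ww\ne \ww'$, pick the first index $j$ where $w_j\ne w'_j$; strict concavity of $K$ evaluated at the interior maximum locations, combined with the equioscillation identities on both configurations, produces an alternating sequence of strict inequalities that accumulate to a contradictory strict ordering of the common equioscillation levels. Once uniqueness is in hand, $\mol(\ww)=\mul(\ww)$ is automatic, so $M(\oS)=\mol(\ww)=\mul(\ww)\le m(\oS)$; rerunning the same perturbation argument at a maximin extremizer shows it must itself be an equioscillation point, hence by uniqueness coincides with $\ww$ and yields $m(\oS)\le M(\oS)$. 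The step I expect to be the main obstacle is this uniqueness argument: $\mol$ is not convex in $\yy$, so a convex-analytic midpoint argument does not apply directly, and one has to carefully combine strict concavity of $K$, the equioscillation identities at both points, and the unbounded cusp derivatives to set up the pointwise alternation-type comparison.
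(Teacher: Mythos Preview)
The paper does not give its own proof of this theorem: it is Fenton's original result, cited from \cite{Fenton} as background. What the paper proves is the generalization Theorem~\ref{thm:main}, which under Fenton's hypotheses recovers existence of a minimax point, equioscillation, and $M(\oS)=m(\oS)$, but \emph{not} the uniqueness assertions. So there is no direct ``paper's proof'' to compare against; the relevant comparison is with the paper's route to Theorem~\ref{thm:main}.

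That route is completely different from yours. The paper never uses the cusp condition~\eqref{cond:inftyprime}, the $C^2$ hypothesis, or $K''<0$. Instead it (i) proves non-majorization by approximating $K$ with singular kernels $K^{(\eta)}=K+\log_-(|\cdot|/\eta)$ and invoking the already-established singular theory (Theorem~\ref{thm:mainold}), and (ii) obtains equioscillation for strictly concave monotone $K$ by a node-perturbation argument based on the Interval Perturbation Lemma~\ref{lem:widening} (Proposition~\ref{prop:eqexistsmm}), then passes to general $K$ by a second approximation $K^{(\eta)}=K+\eta\sqrt{|\cdot|}$. Your direct, classical outline---continuity of $\mol$ gives a minimizer, the cusp rules out degeneracy, first variation forces equioscillation---is presumably close to Fenton's own argument and is a legitimate alternative for this special case; your equioscillation step is in fact essentially the same as the paper's Proposition~\ref{prop:eqexistsmm}.

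Where your proposal has a genuine gap is uniqueness. Your ``Chebyshev-style comparison producing an alternating sequence of strict inequalities'' is a description of a hoped-for outcome, not an argument: you have not identified what quantity alternates, nor why strict concavity of $K$ at the interior maximum points together with equal equioscillation levels forces a contradiction. This is exactly the piece the paper's Theorem~\ref{thm:main} does \emph{not} recover in its general setting, and it is where Fenton's extra hypotheses ($C^2$, $K''<0$, cusp, equal $\nu_j$, concave $J$) are genuinely needed. Your non-degeneracy step is also thinner than it looks: the cusp condition controls pointwise derivatives of $K$ near $0$, but you must convert that into a strict decrease of the \emph{global} maximum $\mol$ upon separating coinciding nodes, and you have not said how the unbounded local derivative beats the possibly competing increase elsewhere.
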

A point $\ww\in\oS$ satisfying \eqref{eq:defekvp} is called  an \emph{equioscillation point}.

The above result of Fenton was extended in various directions. On the torus $\TT:=\RR/\ZZ$, a development started with a particular result and a conjecture by Ambrus, Ball and Erdélyi \cite{AmbrusBallErdelyi}---for the progress in this direction we mention the work of Erdélyi, Hardin, Saff \cite{ErdelyiHardinSaff}  and Hardin, Kendall and Saff \cite{HardinKendallSaff}. In \cite{TLMS2018} we extended the investigation to translates of not necessarily equal kernels, which is also mirrored in our above setup using the more general terms $K_j=\nu_j K$ in \eqref{eq:puresum} and \eqref{eq:Fsum}. Dealing with different kernels was crucial in tackling, e.g., the application to Bojanov type theorems of approximation theory, see \cite{TLMS2018}, \cite{Minimax}.

More appropriate for us here is, however, the setup on the real interval $[0,1]$, matching the setup of Fenton. It challenged us for long what conditions are really necessary and how results can be generalized (and then applied). However, until recently we were working with a stronger condition than Fenton in regard of the singularity assumption. We had good reasons for that: it not only made some intricate proofs possible, but also led to a notable and unexpected result, that of the so-called ``homeomorphism theorem'' \cite[Thm.{} 2.1]{Homeo}, which simply cannot hold true if the kernels $K$ are not singular. We do not formulate it here, however, because our focus is different here. Now we just want to return to the original setup of Fenton, and generalize it in the true sense, that is, not assuming more about singularity at $0$ than he did.

With this we will succeed, however, only by a multiple and fundamental exploitation of our results for the singular case. Therefore, even if not discussing the ways we obtained those results, but we have to recall the end outcomes.

The following result follows from a combination of Corollary 3.2 and Corollary 4.2 of \cite{Minimax}.
\begin{thm}\label{thm:mainold}
	Let $n\in\NN$, $\nu_1,\ldots,\nu_n>0$, $K$ be a singular \eqref{cond:infty} and monotone \eqref{cond:monotone} kernel function, and let $J$ be an upper semicontinuous $n$-field function.

Then $M(\oSS)=m(\oSS)$ and there exists some node system $\ww\in \oS$, also belonging to $Y$, with the three properties that it is an equioscillation point, it attains the simplex maximin and also it attains the simplex minimax: $\mul(\ww)=m(\oSS)=M(\oSS)=\mol(\ww)$.

Moreover, strict majorization $m_i(\xx)>m_i(\yy)$ for every $i=0,1,\ldots,n$ cannot hold for any $\xx, \yy \in Y$.
\end{thm}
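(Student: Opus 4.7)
The plan is to prove all three assertions together by first solving the maximin problem. Since $J$ is upper semicontinuous and $K$ extends to a continuous extended-real-valued map $[-1,1]\to\uR$ (by \eqref{eq:Kzero} combined with the singularity $K(0)=-\infty$), the generalized sum-of-translates $F(\yy,t)$ is upper semicontinuous on the compact set $\oS\times[0,1]$. Hence each $m_j(\yy)=\sup_{t\in I_j(\yy)}F(\yy,t)$ is upper semicontinuous in $\yy$ (the intervals $I_j(\yy)$ vary continuously in Hausdorff distance), and so is $\mul(\yy)=\min_j m_j(\yy)$. The $n$-field nondegeneracy of $J$ forces $Y\neq\emptyset$, so $m(\oSS)=\sup_{\yy\in\oS}\mul(\yy)>-\infty$, and upper semicontinuity on the compact $\oS$ yields the attainment of this supremum at some $\ww^*\in\oS$. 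Since $\mul(\ww^*)>-\infty$ forces every $m_j(\ww^*)$ to be finite, $\ww^*\in Y$.

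The central step is to show that $\ww^*$ is an equioscillation point. Suppose for contradiction that $m_{j_0}(\ww^*)<m_{j_1}(\ww^*)$ for some pair of indices. I would produce a strict improvement of $\mul(\ww^*)$ by a local rebalancing perturbation: shift a suitable node between the two intervals in a direction that enlarges $I_{j_0}$ at the expense of a neighbor. Using monotonicity \eqref{cond:monotone} and concavity of $K$ (via one-sided derivatives, which exist by concavity) one verifies quantitatively that the shift strictly raises $m_{j_0}$ while each other $m_j$ stays above $\mul(\ww^*)$; non-adjacent pairs $j_0,j_1$ are reduced to the adjacent case by chaining rebalancings over intermediate intervals. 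The singularity \eqref{cond:infty} of $K$ enters in guaranteeing that the perturbed configuration remains in $Y$ and produces a strict numerical gain. Any such gain contradicts the maximality of $\mul(\ww^*)$, forcing equioscillation.

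From equioscillation at $\ww^*\in Y$ with common value $c$ we have $\mol(\ww^*)=\mul(\ww^*)=c$, whence $M(\oSS)\le c\le m(\oSS)$; combined with the trivial $m(\oSS)\le M(\oSS)$ (from $\mul\le\mol$ pointwise), we conclude $M(\oSS)=m(\oSS)=c$, with $\ww^*$ attaining both extrema simultaneously. This settles the first two assertions.

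For the anti-majorization claim, suppose for contradiction that $\xx,\yy\in Y$ satisfy $m_i(\xx)>m_i(\yy)$ for all $i=0,1,\dots,n$. My approach would be a \emph{seesaw} argument: construct a continuous path from $\yy$ to $\xx$ within $Y$ (using that in the singular case $Y$ has the requisite path-connectivity, with perturbations around obstructions where needed), and analyze the one-sided directional derivatives of each $m_i$ along this path. A careful bookkeeping---grouped with non-negative weights encoding the balance structure of $\sum_j\nu_j K(t-y_j)$---shows that it is impossible for every $m_i$ to be strictly increasing over the full path, contradicting $m_i(\xx)>m_i(\yy)$ for all $i$. The main obstacle, both here and in the equioscillation argument, is the non-smoothness of $K$: one must work throughout with sub-differentials and one-sided directional derivatives inherited from concavity of $K$ on each piece $(-1,0)$ and $(0,1)$, and the rebalancing chains must be designed carefully so as to stay within $Y$ and deliver strict gains uniformly.
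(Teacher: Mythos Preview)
First, note that the paper does not prove Theorem~\ref{thm:mainold}: it is quoted as a known result from \cite{Minimax} (a combination of Corollaries~3.2 and~4.2 there) and used as a black box throughout.

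Your proposal contains a genuine error. You claim that $m(\oSS)\le M(\oSS)$ follows ``trivially from $\mul\le\mol$ pointwise''. This is false: from $\mul(\yy)\le\mol(\yy)$ for every $\yy$ one cannot deduce $\sup_{\yy}\mul(\yy)\le\inf_{\yy}\mol(\yy)$. (Abstract counterexample: two points $a,b$ with $\mul(a)=0$, $\mol(a)=1$, $\mul(b)=2$, $\mol(b)=3$.) In fact $m(\oSS)\le M(\oSS)$ is precisely the nontrivial half of the minimax identity; in the literature it is obtained \emph{via} the non-majorization statement (cf.\ Lemma~\ref{lem:eqmS}), which you only attempt afterwards. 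Without it, your chain $M(\oSS)\le c\le m(\oSS)$ merely reproduces Remark~\ref{rem:nonmaj0} and does not close the loop.

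Beyond this, both perturbation arguments are too schematic. On the maximin side, enlarging $I_{j_0}$ by shifting a node does not by itself force $m_{j_0}$ to increase strictly: moving a node changes $F(\cdot,t)$ as well as the interval, and the supremum may sit in the interior. The approach actually used (see Proposition~\ref{prop:eqexistsmm} here and \cite{Minimax}) works on the \emph{minimax} side, where one needs only to strictly \emph{decrease} $\mol$; there Lemma~\ref{lem:widening} combined with Lemma~\ref{lem:trivi} yields clean strict decreases on all non-deficient intervals, while upper semicontinuity of the single deficient $m_j$ (Proposition~\ref{prop:mjlscusc}) controls the remaining one. Your ``seesaw'' path argument for non-majorization is likewise only a heuristic; in the singular case this is established in \cite{Minimax} through the homeomorphism theorem of \cite{Homeo}, and an unspecified directional-derivative bookkeeping along a path in $Y$ is unlikely to close without that structure.
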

The situation that no pairs $\xx,\yy\in Y$ exist with $m_j(\xx)>m_j(\yy)$ for every $j=0,1,\ldots,n$, will be expressed by saying that \emph{strict majorization does not hold} on $Y$.

The main result of the paper is the following common generalization of Theorem \ref{thm:mainold} and Theorem \ref{th:Fenton}, dropping basically all assumptions.
\begin{thm} \label{thm:main}
	Let $n\in\NN$, $\nu_1,\ldots,\nu_n>0$, $K$ be a \emph{monotone} \eqref{cond:monotone} kernel function and $J$ be an arbitrary $n$-field function.
Then $M(\oSS)=m(\oSS)$ and there exists some node system $\ww\in \oS$ such that
\[
\mol(\ww)=M(\oSS)=\min_{\xx\in \oS}\mol(\xx).
\]
Further, for any equioscillation point $\ee$ we have $\mol(\ee)=M(\oSS)$, and moreover, if $J$ is upper semicontinuous or $K$ is singular then there exists, in fact, an equioscillation point.

Furthermore, strict majorization $m_i(\xx)>m_i(\yy)$ for every $i=0,1,\ldots,n$ cannot hold for any $\xx, \yy \in Y$.
\end{thm}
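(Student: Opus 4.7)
The plan is to reduce Theorem \ref{thm:main} to Theorem \ref{thm:mainold} by a two-stage approximation: I would regularize the kernel $K$ to be singular by adding a small multiple of a fixed monotone singular kernel, and, when needed, regularize the field $J$ from above by its upper semicontinuous envelope. The approximate problems satisfy the hypotheses of Theorem \ref{thm:mainold}, so they admit equioscillation points, and a subsequential limit will yield the desired minimax node system for $(J,K)$.

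Fix a non-positive monotone singular kernel $L:[-1,1]\to\uR$, for example $L(t):=\log|t|$, and set $K_\ve:=K+\ve L$ for $\ve>0$; this is again a monotone singular kernel. Writing $f_L(\yy,t):=\sum_{j=1}^n\nu_j L(t-y_j)$, we have $F_\ve(\yy,t)=F(\yy,t)+\ve f_L(\yy,t)\le F(\yy,t)$, with $F_\ve\to F$ as $\ve\downto 0$ off the nodes. Let $J^*$ denote the upper semicontinuous envelope of $J$; it is bounded above, and its finiteness domain contains $X^c$, so it remains an $n$-field function, with $J^*\ge J$. Theorem \ref{thm:mainold} applied to $(J^*,K_\ve)$ produces for every $\ve>0$ an equioscillation point $\ww_\ve\in\oS$ with common interval-maximum value $c_\ve$ attaining both the minimax and maximin of the approximate problem, and satisfying no-strict-majorization on the corresponding regularity set. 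By compactness, extract a subsequence with $\ww_\ve\to\ww\in\oS$ and $c_\ve\to c$.

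The remaining task is to verify $\mol(\ww)=c=M(\oSS)=m(\oSS)$. I would establish (i) $\mol(\ww)\le c$ from $F\le F^*$ combined with the upper semicontinuity of the quantity $\mol^*$ built from $J^*$; (ii) $c\le M(\oSS)$ by testing the approximate minimax against an arbitrary fixed $\xx\in\oS$ and exploiting that $\ve f_L$ is uniformly small on compact subsets of $\oS\times[0,1]$ away from the node set of $\xx$; and (iii) $c\ge m(\oSS)$ by the symmetric maximin argument. Inheriting the no-strict-majorization property for $(J,K)$ from the approximating problems via the same limit transfer (which by itself rules out $M(\oSS)<m(\oSS)$, since strict majorization would otherwise arise from comparing a near-minimizer of $\mol$ and a near-maximizer of $\mul$), one closes the chain of inequalities to obtain $M(\oSS)=m(\oSS)=c=\mol(\ww)$. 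The statement that any equioscillation point $\ee$ of $(J,K)$ satisfies $\mol(\ee)=M(\oSS)$ is then immediate: were $\mol(\ee)>M(\oSS)$, a near-minimizer $\xx$ for $M(\oSS)$ in $Y$ would give $m_j(\xx)\le\mol(\xx)<\mol(\ee)=m_j(\ee)$ for every $j$, contradicting no-strict-majorization. Under the stronger hypothesis that $J$ is usc (so $J=J^*$) or that $K$ is singular, only one of the two regularizations is needed, and the limit $\ww$ can be shown to be itself an equioscillation point: in the usc-$J$ case by combining $\mol(\ww)\le c$ with the lower bounds $m_j(\ww)\ge c$ supplied by $F_\ve\le F$ and the upper semicontinuity of each $m_j$; in the singular-$K$ case by a refined limiting argument that exploits the forced singular behavior of $F$ at the nodes.

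The principal difficulty, I expect, is controlling $\mol(\ww)$ in the limit when $J$ is not upper semicontinuous: since $\mol$ itself then fails to be usc, the required upper bound must be extracted indirectly from the sandwich $\mol\le\mol^*$ together with the equioscillation for the $(J^*,K_\ve)$-problem, rather than from any direct semicontinuity of $\mol$. A secondary technical point is the transfer of strict majorization $m_j(\yy)>m_j(\xx)$ from $(J,K)$, for $\xx,\yy\in Y$, to $(J^*,K_\ve)$ for all sufficiently small $\ve$, which requires simultaneous quantitative control of $J^*-J$ and of $\ve f_L$ uniformly on the relevant compact subsets bounded away from the node positions.
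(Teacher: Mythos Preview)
Your overall strategy---regularize $K$ to a singular kernel $K_\ve=K+\ve L$, replace $J$ by its upper semicontinuous envelope $J^*$, apply Theorem~\ref{thm:mainold} to $(J^*,K_\ve)$, and pass to the limit---is close in spirit to the paper's approach, which also proceeds via layered regularizations. In particular, your steps (i)--(ii) can be made rigorous: one checks that $\mol^{*,\ve}\uparrow\mol^*=\mol$ pointwise, all these functions are continuous on $\oS$ (Lemma~\ref{lem:mbarcont}), so by Dini the convergence is uniform, whence $c_\ve=M^{*,\ve}(\oS)\to M(\oS)$ and $\mol(\ww)=M(\oS)$. Your argument for equioscillation when $J$ is upper semicontinuous also goes through: from $m_j(\ww_\ve)\ge m_j^{\ve}(\ww_\ve)=c_\ve$ and the upper semicontinuity of $m_j$ (Proposition~\ref{prop:mjlscusc}(a)) one gets $m_j(\ww)\ge c=\mol(\ww)$ for every $j$, forcing equality.

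There is, however, a genuine gap at step (iii), and the same obstruction undermines your transfer of no-strict-majorization. The claimed ``symmetric maximin argument'' for $c\ge m(\oS)$ fails because, unlike $\mol$, the \emph{individual} interval maxima $m_j$ are not stable under either regularization. Concretely, one always has $m_j^{*,\ve}(\xx)\le m_j(\xx)$ (the singular perturbation kills the node endpoints, and on the open interior $\sup(J^*+f)=\sup(J+f)$), but the reverse approximation $m_j^{*,\ve}(\xx)\to m_j(\xx)$ can fail outright: if the supremum defining $m_j(\xx)$ is attained only at an endpoint node $x_k$ (e.g.\ because $J$ has an isolated spike there), then $\ve f_L(\xx,x_k)=-\infty$ for every $\ve>0$, and $m_j^{*,\ve}(\xx)$ stays bounded away from $m_j(\xx)$ uniformly in $\ve$. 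Thus neither $\mul^{*,\ve}(\xx)\to\mul(\xx)$ nor $\xx\in Y^{*,\ve}$ can be inferred from $\xx\in Y$, and your proposed ``quantitative control of $J^*-J$ and of $\ve f_L$'' cannot overcome this. The paper confronts exactly this difficulty in Lemma~\ref{lem:mmcsill}, whose proof is a nontrivial node-perturbation argument showing $m(\oS)=m^*(\oS)$; and for the no-majorization part it bypasses the issue entirely by replacing $J$ not with $J^*$ but with the function $\widetilde J$ that agrees with $J$ on a carefully chosen finite set and is $-\infty$ elsewhere---automatically upper semicontinuous, and designed so that both $\xx$ and $\yy$ remain in the corresponding regularity set. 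These are the substantive ideas your outline is missing.
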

Concerning Bernstein-Erd\H os-type equioscillation characterization for some minimax problems Shi presented in \cite{Shi} an abstract approach in the framework of differentiable functions. The conditions there involve assumption on the Jacobian of the interval maxima  vector function $\mv$ and also a singularity type condition. These are not fulfilled in our situation, as the here occuring functions have (in general) very weak continuity properties and the singularity plays no role whatsoever.

We start with some auxiliary results and then  prove the Theorem \ref{thm:main} in the following steps.
\begin{num}
	\item We prove that majorization cannot hold on $Y$ for $J$ extended continuous.
	\item We prove that majorization cannot hold on $Y$ for $J$ upper semicontinuous.
	\item We show,  for upper semicontinuous  $J$, the existence of an equioscillation point and the minimax/maximin result.
	\item We handle the case of general $J$ when $K$ is singular.
	\item We finish the proof by tackling the case of general $J$ when $K$ is non-singular.
	\end{num}

\section{Technical lemmas}\label{sec:Prelimlem}

We collect here results of general nature that are needed for the proof Theorem \ref{thm:main}.

\begin{rem}\label{rem:nonmaj0}
If there is an equioscillation point, then $M(\oSS)\leq m(\oSS)$. Indeed, if $\ee\in \oS$ is an equioscillation point, then
\[
M(\oSS)\leq\mol(\ee)=\mul(\ee)\leq m(\oSS).
\]
The converse implication is not true is $J$ not upper semicontinuous.
Consider the case $K\equiv 0$, $n=1$ and $J(t)=\car_{[0,1/2)}(t)t$. Then there is no equioscillation point, but of course $M(\oSS)=m(\oSS)=1/2$. (There is however an equioscillation point if one considers the upper semicontinuous regularization of $J^*(t)=\car_{[0,1/2]}(t)t$ instead. This will be important below.)
\end{rem}

The following easy lemma will be helpful.
\begin{lem}\label{lem:eqmS}
Let $n\in\NN$, $\nu_1,\ldots,\nu_n>0$,
let $K$ be a kernel function and let $J$ be an $n$-field function.
Assume that there exists an equioscillation point $\ee\in \oS$ with value $\mu$.
Furthermore, suppose that strict majorization does not hold
on $Y$, i.e.,
there are no $\xx,\yy\in Y$ satisfying $m_j(\xx)>m_j(\yy)$ for every $j\in \{0,1,\dots,n\}$.
Then $m(\oSS)=\mu$.
	\end{lem}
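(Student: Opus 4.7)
The plan is to pin down $m(\oSS)$ by a two-sided bracket: the equioscillation point $\ee$ itself witnesses $m(\oSS) \ge \mu$, while a hypothetical failure of $m(\oSS) \le \mu$ would produce a pair of node systems in the regularity set $Y$ violating the no-strict-majorization assumption.

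For the lower bound, I would simply observe that $\mul(\ee) = \min_{j} m_j(\ee) = \mu$, since $\ee$ is an equioscillation point with common value $\mu$. As $\ee \in \oS$, the definition of $m(\oSS)$ as a supremum immediately yields $m(\oSS) \ge \mu$.

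For the upper bound $m(\oSS) \le \mu$, I would argue by contradiction: suppose some $\yy \in \oS$ satisfies $\mul(\yy) > \mu$. To invoke the no-strict-majorization assumption, I must verify that both $\ee$ and $\yy$ lie in $Y$. For $\ee$, the introduction records that $\mol(\xx) = \sup_{t \in [0,1]} F(\xx,t) \in \RR$ for every $\xx \in \oS$; specialising to $\ee$ gives $\mu = \mol(\ee) \in \RR$, so all the equal values $m_j(\ee) = \mu$ are finite, and hence $\ee \in Y$. For $\yy$, the assumption $\mul(\yy) > \mu > -\infty$ forces every $m_j(\yy)$ to be finite, so $\yy \in Y$ as well. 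But then $m_j(\yy) \ge \mul(\yy) > \mu = m_j(\ee)$ for all $j \in \{0,1,\ldots,n\}$, which is a strict majorization of $\ee$ by $\yy$ on $Y$, contradicting the hypothesis. Consequently $\mul(\yy) \le \mu$ for every $\yy \in \oS$, giving $m(\oSS) \le \mu$.

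I do not foresee any real obstacle; the only mildly delicate point is noticing that $\mu$ must automatically be finite (because $\mol(\ee)$ is always finite), so that $\ee$ is forced into $Y$ without any separate regularity hypothesis on the equioscillation point.
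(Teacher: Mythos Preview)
Your proof is correct and follows essentially the same approach as the paper: establish $m(\oSS)\ge\mu$ directly from $\mul(\ee)=\mu$, then argue by contradiction that any $\yy$ with $\mul(\yy)>\mu$ would strictly majorize $\ee$ on $Y$. Your explicit verification that $\mu$ is finite (so $\ee\in Y$) is a detail the paper leaves implicit in the line $\mu>-\infty$.
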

	\begin{proof}
	We obviously have $m(\oSS)\geq \mul(\ee)=\mu>-\infty$.
	Suppose for a contradiction that $m(\oSS)>\mu$. Take $\ww\in \oS$ with $\mul(\ww)>\mu$. Then $\ww\in Y$, and we have
	\[
	m_j(\ww)>\mu=m_j(\ee)\quad\text{for each $j=0,1,\dots,n$},
	\]
	contradicting  that majorization does not hold on $Y$, and we are done.
	\end{proof}

Let us record the following, trivial but extremely useful fact as a separate lemma.

\begin{lemma}[\textbf{Trivial lemma}]\label{lem:trivi}
Let $f, g, h :D \to \uR$ be upper semicontinuous functions on some Hausdorff topological space $D$ and
let $\emptyset\neq A \subseteq B \subseteq D$ be arbitrary.
Assume
\begin{equation}\label{eq:trivia}
f(t)<g(t) \qquad\text{for all $t\in A$}.
\end{equation}
If $A\subseteq B$ is a compact set, then
\begin{equation}\label{eq:trivia-plus}
\max_A (f+h) < \sup_B (g+h) \qquad {\rm unless} \qquad h\equiv-\infty \quad\text{on }\quad A.
\end{equation}
\end{lemma}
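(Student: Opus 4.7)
The plan is to exploit compactness of $A$ to get the supremum attained, and then split on whether the maximum on $A$ is finite or $-\infty$. The essential preliminary observation is that the hypothesis $f(t)<g(t)$ on $A$, combined with $f,g$ taking values in $\uR=\RR\cup\{-\infty\}$, forces $g(t)\in\RR$ for every $t\in A$ (otherwise $f(t)<-\infty$ would be impossible). So $g$ is finite throughout $A$, a fact I will use twice below. I also need to note that $f+h$ (with the convention $-\infty+a=-\infty$ for $a\in\uR$) is upper semicontinuous as a sum of u.s.c.\ extended real valued functions, hence attains its supremum on the compact set $A$.

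Let $M:=\max_A(f+h)\in\uR$ and fix any $t_0\in A$ with $h(t_0)\neq -\infty$, which exists by the assumption $h\not\equiv-\infty$ on $A$; then $h(t_0)\in\RR$. I would then split:

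\textbf{Case 1: $M=-\infty$.} Since $g(t_0)\in\RR$ (by the preliminary observation) and $h(t_0)\in\RR$, we get $g(t_0)+h(t_0)\in\RR$, hence
\[
\sup_B(g+h)\ \geq\ g(t_0)+h(t_0)\ >\ -\infty\ =\ M.
\]

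\textbf{Case 2: $M\in\RR$.} Pick $t^*\in A$ with $f(t^*)+h(t^*)=M$. Finiteness of $M$ forces $f(t^*)\in\RR$ and $h(t^*)\in\RR$; the strict inequality $g(t^*)>f(t^*)$ in $\RR$ can then be added to the finite number $h(t^*)$ to yield
\[
\sup_B(g+h)\ \geq\ g(t^*)+h(t^*)\ >\ f(t^*)+h(t^*)\ =\ M,
\]
using $t^*\in A\subseteq B$.

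There is no real obstacle here; the whole point of the lemma (as its name suggests) is to isolate a pedantic but repeatedly useful bookkeeping step about adding a possibly singular weight $h$ to a strict inequality between u.s.c.\ functions. The only subtle point is keeping track of the arithmetic in $\uR$ and recognizing that the exception ``$h\equiv-\infty$ on $A$'' is genuinely necessary (in that case $(f+h)|_A\equiv-\infty$ and the conclusion can fail if also $(g+h)|_B\equiv-\infty$), while otherwise the existence of a single $t_0\in A$ with $h(t_0)$ finite suffices to run the argument.
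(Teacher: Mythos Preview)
Your argument is correct and self-contained. The paper itself does not give a proof of this lemma here; it simply refers the reader to Lemma~3.2 of \cite{Minimax}, so there is nothing to compare against beyond noting that your two-case split (on whether $\max_A(f+h)$ is finite or $-\infty$) together with the observation that $g$ is finite on $A$ is exactly the natural way to establish such a statement.
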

\begin{proof} For a proof see Lemma 3.2 in \cite{Minimax}. \end{proof}

\begin{lemma}[\textbf{Interval perturbation lemma}]\label{lem:widening}
Let $K$ be any kernel function. Let $0<\alpha<a<b<\beta< 1$ and  $p, q >0$. Set
\begin{equation}\label{eq:mudef}
\kappa:=\frac{p(a-\alpha)}{q(\beta-b)}.
\end{equation}
\begin{abc}
\item If $K$ satisfies \eqref{cond:monotone} and $\kappa\geq 1$, then for every $t\in [0,\alpha]$ we have
\begin{equation}\label{eq:wideninglemma}
pK(t-\alpha)+qK(t-\beta) \le pK(t-a)+qK(t-b).
\end{equation}
\item If $K$ satisfies \eqref{cond:monotone} and $\kappa\leq 1$, then \eqref{eq:wideninglemma} holds for every $t\in [\beta,1]$.
\item If $\kappa=1$ (and even if $K$ does not necessarily satisfy \eqref{cond:monotone}), then \eqref{eq:wideninglemma} holds for every $t\in [0,\alpha] \cup [\beta,1]$.
\item In case of a strictly concave kernel function {\upshape(a)}, {\upshape(b)} and {\upshape(c)} hold with strict inequality in \eqref{eq:wideninglemma}.
\item If $K$ satisfies \eqref{cond:monotone}, then for every $t\in [a,b]$
\begin{equation}\label{eq:wideninginside}
pK(t-\alpha)+qK(t-\beta) \geq  pK(t-a)+qK(t-b),
\end{equation}
with strict inequality if $K$ is strictly monotone.
\end{abc}
\end{lemma}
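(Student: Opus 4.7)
The proof hinges on a single slope-comparison estimate coming from concavity, then combined appropriately with the hypothesis on $\kappa$ and, in (a), (b), (e), with the monotonicity of $K$. My plan is to handle (a)--(d) with a common computation and to treat (e) separately. For $t$ in the regions of interest, write
$s_1 := t-\beta < s_2 := t-b < s_3 := t-a < s_4 := t-\alpha$,
so that $s_2-s_1 = \beta-b$ and $s_4-s_3 = a-\alpha$. With this notation, \eqref{eq:wideninglemma} is equivalent to $p[K(s_4)-K(s_3)] \le q[K(s_2)-K(s_1)]$, and I will establish this form.

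For $t \in [0,\alpha]$ all four $s_i$ lie in $(-1,0]$, while for $t \in [\beta,1]$ they lie in $[0,1)$; in either case $K$ is concave on the relevant range (the endpoint value $0$ is either harmless by extended continuity or renders the inequality trivial in the singular case). The chord-slope monotonicity of a concave function then yields
\[
\frac{K(s_2)-K(s_1)}{s_2-s_1} \;\ge\; \frac{K(s_4)-K(s_3)}{s_4-s_3},
\]
and multiplying by $p(s_4-s_3) > 0$ and invoking the definition of $\kappa$ rewrites this as
\[
p[K(s_4)-K(s_3)] \;\le\; \kappa \cdot q[K(s_2)-K(s_1)].
\]
Case (c), $\kappa=1$, is now immediate and uses no monotonicity. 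For (a), on $[0,\alpha]$ condition \eqref{cond:monotone} forces $K(s_2)\le K(s_1)$, so $q[K(s_2)-K(s_1)]\le 0$, and multiplying by $\kappa \ge 1$ only makes this quantity smaller (or equal), hence it is an upper bound for $\kappa q[K(s_2)-K(s_1)]$, yielding the claim. For (b), on $[\beta,1]$ monotonicity gives $K(s_2)\ge K(s_1)$, and $\kappa\le 1$ analogously reduces the right-hand side below $q[K(s_2)-K(s_1)]$. For (d), since $a<b$ forces $s_2<s_3$, the two chords $[s_1,s_2]$ and $[s_3,s_4]$ are disjoint, so strict concavity sharpens the slope comparison to a strict inequality, which propagates through each of (a)--(c).

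Part (e) is independent and follows from monotonicity alone. For $t \in [a,b]$ the arguments separate across zero: $t-\alpha > t-a \ge 0$ both lie in the non-decreasing branch of $K$, so $K(t-\alpha) \ge K(t-a)$; and $t-\beta < t-b \le 0$ both lie in the non-increasing branch, so $K(t-\beta) \ge K(t-b)$. Taking the weighted sum with positive weights $p,q$ yields \eqref{eq:wideninginside}. Under strict monotonicity both term-wise inequalities are strict. The only small subtlety is at the endpoints $t=a$ or $t=b$, where an argument of $K$ equals $0$; one then appeals to the definition of $K(0)$ as the common one-sided limit (the inequality becoming trivial when $K(0)=-\infty$). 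I do not foresee any substantial obstacle; the whole lemma is a careful bookkeeping exercise in signs, concavity, and the definition of $\kappa$, with the sole creative ingredient being the chord-slope inequality displayed above.
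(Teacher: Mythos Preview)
Your argument is correct and is essentially the standard route: reduce \eqref{eq:wideninglemma} to a slope comparison for the concave function $K$, then use the sign of $K(s_2)-K(s_1)$ forced by \eqref{cond:monotone} to pass from $\kappa$ to $1$; part (e) follows from monotonicity alone, termwise. The paper does not give a self-contained proof here but refers to Lemma~3.1 of \cite{Minimax}, and your computation is exactly the expected one, so there is nothing substantive to compare.
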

\begin{proof} The essence of this lemma was well-known in various forms, the earliest explicit occurrence we could trace is in \cite{Rankin}. For a proof of exactly this  form and some remarks about the context, see Lemma 3.1 of\footnote{\arxivfn} \cite{Minimax}.
\end{proof}

\section{Continuity-type technical results}\label{sec:conttech}

Obviously, the pure sum of translate functions are (extended) continuous, while the sum of translates functions are continuous only partially in their first variable $\xx\in S$. If $K$ is singular, then also the functions $m_j$ are extended continuous, which is not immediately obvious due to the arbitrariness of $J$, but was proven in \cite{Homeo} as Lemma 3.3. We recall this here, too\footnote{\arxivfn}.

\begin{lemma}\label{lem:mjcont}
If the kernel function $K$ is singular, and if $J$ is an arbitrary field function, then for each $j\in \{0,1,\dots, n\}$ the interval maximum function
\[
m_j:\overline{S}\to \uR
\]
is continuous (in the extended sense).
\end{lemma}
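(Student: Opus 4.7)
My plan is to establish lower and upper semicontinuity of $m_j$ separately on $\oS$. Two standing facts will drive everything. First, the \emph{pure} sum of translates $f(\yy,t)=\sum_k \nu_k K(t-y_k)$ is jointly extended-continuous on $\oS\times[0,1]$, since $K:[-1,1]\to\uR$ is extended-continuous. Second, by the singularity assumption $K(0)=-\infty$, we have $f(\yy,t)=-\infty$ precisely when $t$ coincides with some node of $\yy$. Combined with $J\le M$ for some constant $M$ (field functions are upper bounded), this yields a sharp dichotomy at any limit point that will absorb the possible non-upper-semicontinuity of $J$.

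For \emph{lower} semicontinuity at $\yy\in\oS$, I would fix a sequence $\yy^{(k)}\to\yy$ and any $\mu<m_j(\yy)$. Pick $t^*\in I_j(\yy)$ with $F(\yy,t^*)>\mu$; then $f(\yy,t^*)>-\infty$, so by singularity of $K$ the point $t^*$ is not a node of $\yy$. A brief case check (relative interior of $I_j(\yy)$, or the endpoints $0,1$ in the cases $j=0,n$) shows $t^*\in I_j(\yy^{(k)})$ for all large $k$, whence joint continuity of $f$ gives $F(\yy^{(k)},t^*)=J(t^*)+f(\yy^{(k)},t^*)\to F(\yy,t^*)>\mu$, and so $\liminf_k m_j(\yy^{(k)})\ge\mu$. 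Letting $\mu\upto m_j(\yy)$ finishes this half.

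The harder half is \emph{upper} semicontinuity, since $J$ need only be upper bounded. Set $L:=\limsup_k m_j(\yy^{(k)})$, pass to a subsequence with $m_j(\yy^{(k)})\to L$, and select near-maximizers $t_k\in I_j(\yy^{(k)})$ satisfying $F(\yy^{(k)},t_k)>m_j(\yy^{(k)})-1/k$; along a further subsequence $t_k\to t^*\in I_j(\yy)$ by compactness. The central step is the singularity-driven dichotomy at $t^*$: if $f(\yy,t^*)=-\infty$ then $f(\yy^{(k)},t_k)\to-\infty$ by joint continuity, and the uniform upper bound $J\le M$ forces $F(\yy^{(k)},t_k)\to-\infty$, hence $L=-\infty$ and the claim is trivial. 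Otherwise $f(\yy,t^*)\in\RR$, so $t^*$ is not a node of $\yy$, which in turn yields $t_k\in I_j(\yy)$ for large $k$; moreover $f(\yy^{(k)},t_k)\to f(\yy,t^*)$ and $f(\yy,t_k)\to f(\yy,t^*)$ both finitely, so that the difference $F(\yy^{(k)},t_k)-F(\yy,t_k)=f(\yy^{(k)},t_k)-f(\yy,t_k)\to 0$ transfers $F(\yy,t_k)\to L$; combined with $t_k\in I_j(\yy)$ eventually this delivers $m_j(\yy)\ge L$.

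The main obstacle is precisely this second half: one must block the scenario in which $J$ behaves wildly at the limit point $t^*$ and inflates $m_j(\yy^{(k)})$ above $m_j(\yy)$. Singularity of $K$ is indispensable here, as it is exactly what rules out the residual case $f(\yy,t^*)=-\infty$ with $F(\yy^{(k)},t_k)$ remaining large, by forcing an unconditional collapse $F\to-\infty$ that no upper-bounded $J$ can repair. The remaining checks (degenerate node systems with $y_i=y_{i+1}$, and endpoint cases $t^*\in\{0,1\}$) are routine once the dichotomy is in place.
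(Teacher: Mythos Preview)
Your argument is correct. The paper itself does not give a proof of this lemma but merely cites it from \cite{Homeo} (Lemma~3.3 there), so there is no in-paper argument to compare against; your self-contained treatment via the singularity-driven dichotomy is the natural route. One minor point: the preliminary claim that $f(\yy,t)=-\infty$ holds \emph{precisely} when $t$ is a node of $\yy$ is a slight overstatement, since $K(\pm1)=-\infty$ is also allowed; but in the actual argument you only invoke the valid direction $f(\yy,t^*)\in\RR\Rightarrow t^*\notin\{y_1,\dots,y_n\}$, so the proof is unaffected.
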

However, in line with our main object of study, we need to free ourselves from the singularity of the kernels here. Therefore, below we will work out a number of continuity type results dropping the singularity assumption on the kernel, but invoking various additional continuity-type assumptions on the $n$-field function $J$. First let us present\footnote{We think that this must have been observed and recorded by various authors in various forms. However, after not finding a reference we decided to present the few lines proof here.} an almost obvious, yet very useful partial substitute for the above continuity result, which is still surprisingly general, not requiring more assumptions on the field function $J$ than mere upper boundedness.

\begin{lemma}\label{lem:mjcontrestab}
	 Let $K_1,\dots, K_n:[-1,1]\to \uR$ be extended continuous functions and $J:[0,1]\to \uR$ be an arbitrary upper bounded function. Consider the corresponding generalized sum of translates function $F$ from \eqref{eq:Fsumgen}. Let $[a,b]\subseteq [0,1]$ be a closed interval. Then the function $m_{[a,b]}(\yy)=\sup_{t\in [a,b]}F(\yy,\cdot)$ is (extended) continuous on $\oS$.
	\end{lemma}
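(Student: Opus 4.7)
The plan is to establish lower and upper semicontinuity of $m_{[a,b]}$ on $\oS$ separately; combined they give extended continuity. Start with \emph{lower semicontinuity}. The pure sum $f(\yy, t) := \sum_{j=1}^n K_j(t - y_j)$ is extended continuous on the compact product $\oS \times [0,1]$, being a finite sum of compositions of the extended continuous kernels $K_j$ with the continuous maps $(\yy, t) \mapsto t - y_j$. Fix $\yy_0 \in \oS$ and a sequence $\yy_k \to \yy_0$. For each $t \in [a,b]$, extended continuity of $f$ gives $f(\yy_k, t) \to f(\yy_0, t)$ in $\uR$, and since $J(t) \in \uR$ is bounded above, $F(\yy_k, t) = J(t) + f(\yy_k, t) \to F(\yy_0, t)$ in $\uR$ (with the convention $-\infty + c = -\infty$ for $c < +\infty$). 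Hence $\liminf_k m_{[a,b]}(\yy_k) \ge \liminf_k F(\yy_k, t) = F(\yy_0, t)$ for every $t \in [a,b]$, and taking the supremum over $t$ yields $\liminf_k m_{[a,b]}(\yy_k) \ge m_{[a,b]}(\yy_0)$.

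\emph{Upper semicontinuity} is the expected main obstacle: since $J$ is only upper bounded (not upper semicontinuous), the supremum defining $m_{[a,b]}(\yy_0)$ need not be attained, and one cannot extract a convergent maximizing sequence directly. My plan is to replace $J$ by its upper semicontinuous regularization \emph{relative to} $[a,b]$,
\[
\tilde J^*(t) := \limsup_{s \to t,\: s \in [a,b]} J(s) \qquad (t \in [a,b]),
\]
and prove the identity
\[
m_{[a,b]}(\yy) = \sup_{t \in [a,b]} \bigl[\tilde J^*(t) + f(\yy, t)\bigr] \qquad (\yy \in \oS).
\]
The inequality ``$\le$'' is immediate from $J \le \tilde J^*$. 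For ``$\ge$'', fix $t_0 \in [a,b]$ with $\tilde J^*(t_0) + f(\yy, t_0) > -\infty$, choose $s_k \to t_0$ inside $[a,b]$ with $J(s_k) \to \tilde J^*(t_0)$ (which exists by the very definition of the relative regularization, with one-sided approaches available at the endpoints $a$ and $b$), and pass to the limit via extended continuity of $f(\yy, \cdot)$. It is essential to use the $[a,b]$-relative regularization rather than a global one, since $J$ may have a limit value approaching from outside $[a,b]$ that exceeds $\sup_{[a,b]} J$ and would spoil the identity.

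With the identity in hand, $F^*(\yy, t) := \tilde J^*(t) + f(\yy, t)$ is upper semicontinuous on the compact set $\oS \times [a,b]$ as a sum of two upper semicontinuous $\uR$-valued functions. The standard envelope fact that the supremum of a jointly upper semicontinuous function over a compact variable is upper semicontinuous in the remaining variable---verified by choosing, for $\yy_k \to \yy_0$, a maximizer $t_k \in [a,b]$ of $F^*(\yy_k, \cdot)$ (which is attained because $F^*(\yy_k, \cdot)$ is upper semicontinuous on the compact $[a,b]$), passing to a subsequential limit $t_k \to t_0$, and applying joint upper semicontinuity at $(\yy_0, t_0)$---then yields upper semicontinuity of $m_{[a,b]} = \sup_{t \in [a,b]} F^*(\cdot, t)$ on $\oS$. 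Combined with the first step, $m_{[a,b]}$ is extended continuous on $\oS$.
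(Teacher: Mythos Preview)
Your proof is correct, but it takes a genuinely different route from the paper's. The paper bypasses the semicontinuity split altogether by applying the exponential transformation $\mu(\xx):=\exp(m_{[a,b]}(\xx))$ and $\varphi(\xx,t):=\exp(f(\xx,t))$: since $f$ is extended continuous on the compact $\oS\times[0,1]$, the finite-valued $\varphi$ is uniformly continuous, and for $\|\xx-\yy\|<\delta$ one gets $|\mu(\xx)-\mu(\yy)|\le (1+\sup e^{J})\,\varepsilon$ directly by evaluating at near-maximizers; then $m_{[a,b]}=\log\mu$ is extended continuous. This is shorter and avoids the regularization machinery entirely. Your approach, by contrast, isolates the structural reason upper semicontinuity holds---namely that replacing $J$ by its $[a,b]$-relative upper semicontinuous envelope leaves $m_{[a,b]}$ invariant (a fact closely related to Lemma~\ref{lem:mbarinv} later in the paper)---and then invokes the standard compact-fiber envelope lemma. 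Your route is more conceptual and reuses an idea the paper needs anyway; the paper's route is a self-contained one-paragraph trick.

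One small point of care: for the identity $m_{[a,b]}(\yy)=\sup_{t\in[a,b]}[\tilde J^*(t)+f(\yy,t)]$ you need $J\le\tilde J^*$ pointwise, so your $\limsup$ defining $\tilde J^*$ must be understood as including $s=t$ (equivalently, $\tilde J^*(t)=\max\bigl(J(t),\limsup_{s\to t,\,s\in[a,b],\,s\ne t}J(s)\bigr)$); with the ``punctured'' $\limsup$ alone this inequality can fail at isolated upward spikes of $J$. Your ``$\ge$'' argument then splits naturally into the case $\tilde J^*(t_0)=J(t_0)$ (trivial) and the case $\tilde J^*(t_0)>J(t_0)$ (your approximating sequence exists). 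The degenerate case $a=b$ is harmless but worth a sentence.
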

\begin{proof} For technical simplicity, let us consider $\Phi(\xx,t):=\exp(F(\xx,t))$ and
$$
\mu(\xx):=\exp(m_{[a,b]}(\xx))=\exp(\sup_{[a,b]} F(\xx,\cdot))=\sup_{[a,b]} \exp(F(\xx,\cdot))=\sup_{[a,b]} \Phi(\xx,\cdot).
$$
Also, put $f(\xx,t):=\sum_{j=1}^n K_j(t-x_j)
$ and $\ff(\xx,t):=\exp(f(\xx,t))$. As $K_j$ are all extended continuous, so is $f(\xx,t)$, whence $\ff(\xx,t)$ is standard (finite valued) continuous and is therefore uniformly continuous, too. Thus for any $\ve>0$ there is $\de>0$ such that in particular $|\ff(\xx,t)-\ff(\yy,t)|<\ve$ whenever $\|\xx-\yy\|<\de$.

Let now $\xx, \yy \in \oS$ be fixed with $\|\xx-\yy\|<\de$, and $t,s \in [a,b]$ be points such that $\Phi(\xx,t)>\mu(\xx)-\ve$ and $\Phi(\yy,s)>\mu(\yy)-\ve$. We then have
\begin{align*}
\mu(\yy) &\ge \Phi(\yy,t)=\exp(J(t)) \ff(\yy,t) \ge \exp(J(t)) (\ff(\xx,t)-\ve)
\\& = \Phi(\xx,t) -\exp(J(t)) \ve \ge \mu(\xx)-\ve(1+\exp(J(t)),
\end{align*}
and similarly $\mu(\xx) \ge \mu(\yy)-\ve (1+\exp(J(s))$. As $J$ is upper bounded, we also have $0\le \exp(J(\cdot)) \le C$ with a finite constant $C$, whence this means $|\mu(\xx)-\mu(\yy)|\le (C+1)\ve$ whenever $\|\xx-\yy\|<\de$. That is, $\mu$ is continuous on $\oS$, and therefore also $m_{[a,b]}=\log \mu$ is extended continuous.
\end{proof}

Two immediate consequences are the following:

\begin{lemma}\label{lem:mbarcont}
	Let $n\in \NN$, $\nu_j>0 ~(j=1,\ldots,n)$, let $J$ be an $n$-field function, and let $K$ be a kernel function. Then $\mol :\oS \to \RR$ is continuous.
\end{lemma}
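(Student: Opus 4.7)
The plan is to apply Lemma \ref{lem:mjcontrestab} directly with $[a,b] := [0,1]$ and $K_j := \nu_j K$ for $j=1,\dots,n$. First I would check the hypotheses: each $K_j = \nu_j K$ is extended continuous on $[-1,1]$ because, as observed right after \eqref{eq:Kzero}, a kernel function extends to an extended continuous function $K:[-1,1]\to\uR$; and $J$ is upper bounded on $[0,1]$ by the very definition of an $n$-field function. The generalized sum of translates \eqref{eq:Fsumgen} built from these data coincides with the weighted sum of translates $F$ from \eqref{eq:Fsum}, so Lemma \ref{lem:mjcontrestab} yields that $m_{[0,1]}(\yy)=\sup_{t\in[0,1]}F(\yy,t)=\mol(\yy)$ is extended continuous on $\oS$.

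To conclude standard (rather than merely extended) continuity into $\RR$, I would invoke the fact already recorded in the excerpt just before the statement of the lemma, namely that $\mol(\yy)\in\RR$ for every $\yy\in\oS$ (this uses the non-degeneracy part of the definition of an $n$-field function, which guarantees $F(\yy,\cdot)\not\equiv-\infty$). Since the codomain is the whole of $\RR$, extended continuity collapses to ordinary continuity, and the proof is finished.

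I do not foresee any real obstacle here: the lemma is essentially a corollary, and the only subtlety is lining up the notation between the generalized setup of Lemma \ref{lem:mjcontrestab} and the weighted setup of \eqref{eq:Fsum}, together with the reminder that finiteness of $\mol$ was already established.
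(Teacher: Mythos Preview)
Your proposal is correct and matches the paper's own treatment: the paper simply lists this lemma as one of two ``immediate consequences'' of Lemma~\ref{lem:mjcontrestab}, and your argument spells out exactly that deduction (take $[a,b]=[0,1]$, $K_j=\nu_jK$, then use finiteness of $\mol$ to pass from extended to ordinary continuity).
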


\begin{lemma}\label{lem:mjcontrest}Let $K_1,\dots. K_n:[-1,1]\to \uR$ be extended continuous functions and $J:[0,1]\to \uR$ be an arbitrary upper bounded function.  Consider the corresponding generalized sum of translates function $F$ from \eqref{eq:Fsumgen}. For fixed $\xx\in \oS$ and $j\in\{0,1,\dots,n\}$ the function $m_j(\yy)=\sup_{t\in [y_j,y_{j+1}]}F(\yy,\cdot)$ is (extended) continuous on
\[
S_{j,\xx}:=\{\yy:y_j=x_j,\: y_{j+1}=x_{j+1}\}.
\]
\end{lemma}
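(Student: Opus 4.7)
The plan is simply to reduce this statement to the previously established Lemma~\ref{lem:mjcontrestab}, since on the slice $S_{j,\xx}$ the interval over which the supremum is taken is actually fixed and does not depend on the running variable $\yy$.

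Concretely, I would fix $\xx\in\oS$ and $j\in\{0,1,\dots,n\}$ and set $a:=x_j$ and $b:=x_{j+1}$, observing that $0\le a\le b\le 1$ (with the usual convention $x_0:=0$, $x_{n+1}:=1$). For any $\yy\in S_{j,\xx}$ the defining relations $y_j=x_j=a$ and $y_{j+1}=x_{j+1}=b$ hold, so that $[y_j,y_{j+1}]=[a,b]$, and therefore
\[
m_j(\yy)=\sup_{t\in[y_j,y_{j+1}]}F(\yy,t)=\sup_{t\in[a,b]}F(\yy,t)=m_{[a,b]}(\yy)
\qquad(\yy\in S_{j,\xx}).
\]
Since $K_1,\dots,K_n$ are extended continuous and $J$ is upper bounded, Lemma~\ref{lem:mjcontrestab} applies to the closed interval $[a,b]\subseteq[0,1]$ and yields that $m_{[a,b]}$ is extended continuous on the whole of $\oS$. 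Restricting to the (closed) affine slice $S_{j,\xx}\subseteq\oS$ preserves extended continuity, which proves the claim. There is really no obstacle here beyond the bookkeeping of recognizing that fixing the two bounding coordinates makes the variable interval a constant one; the actual continuity work has already been done in Lemma~\ref{lem:mjcontrestab}.
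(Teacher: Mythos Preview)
Your proof is correct and follows exactly the same approach as the paper's: set $[a,b]:=[x_j,x_{j+1}]$, observe that $m_j|_{S_{j,\xx}}=m_{[a,b]}|_{S_{j,\xx}}$, and invoke Lemma~\ref{lem:mjcontrestab}.
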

\begin{proof}

Set $[a,b]:=[x_j,x_{j+1}]$. Then, with the notation of Lemma \ref{lem:mjcontrestab} $m_j|_{S_{j,\xx}}=m_{[a,b]}|_{S_{j,\xx}}$, and the assertion  follows.\end{proof}

\begin{lemma}\label{lem:mjcontrest2}Let $K_1,\dots. K_n:[-1,1]\to \uR$ be extended continuous functions and $J:[0,1]\to \uR$ be an arbitrary upper bounded function.  Consider the corresponding generalized sum of translates function $F$ from \eqref{eq:Fsumgen}. Fix $\xx \in \oS$ and $j \in \{0,1,\ldots,n\}$ and let $\calX_j:=\{k\in \{1,\dots,n\}: x_k=x_j\}$ and  take $i\in \{0,1,\dots,n\}$ with $x_j\not\in I_i(\xx)$.
Then the function $m_i(\yy)=\sup_{t\in [y_i,y_{i+1}]}F(\yy,\cdot)$ is (extended) continuous on \[
A_{i,j,\xx}:=\{\zz\in \oS: z_k=z_j\not\in I_i(\xx)\text{ for each $k\in \calX_j$ and $z_k=x_k$ for each $k\not\in \calX_j$}\}.
\]
\end{lemma}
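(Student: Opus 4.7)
The plan is to reduce this lemma directly to Lemma \ref{lem:mjcontrestab}. The key observation is that on the set $A_{i,j,\xx}$ the interval $[y_i,y_{i+1}]$, over which the supremum defining $m_i$ is taken, does not actually vary with $\yy$; it stays fixed at $[x_i,x_{i+1}]$. Once this is established, setting $[a,b]:=[x_i,x_{i+1}]$, the function $m_i$ coincides with $m_{[a,b]}$ on $A_{i,j,\xx}$, and Lemma \ref{lem:mjcontrestab} yields extended continuity.

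To prove the invariance of the interval, I would exploit the hypothesis $x_j\notin I_i(\xx)=[x_i,x_{i+1}]$, which forces $x_j\neq x_i$ (whenever $i\in\{1,\ldots,n\}$) and $x_j\neq x_{i+1}$ (whenever $i+1\in\{1,\ldots,n\}$). Since $\calX_j\subseteq\{1,\ldots,n\}$ is by definition the set of indices $k$ with $x_k=x_j$, this shows that neither $i$ nor $i+1$ belongs to $\calX_j$ whenever they lie in $\{1,\ldots,n\}$. The definition of $A_{i,j,\xx}$ then yields $z_i=x_i$ and $z_{i+1}=x_{i+1}$ for every $\zz\in A_{i,j,\xx}$, with the remaining boundary cases $i=0$ and $i+1=n+1$ taken care of by the standing convention $z_0=0=x_0$ and $z_{n+1}=1=x_{n+1}$.

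Having the interval fixed, for every $\zz\in A_{i,j,\xx}$ one has
\[
m_i(\zz)=\sup_{t\in[z_i,z_{i+1}]}F(\zz,t)=\sup_{t\in[x_i,x_{i+1}]}F(\zz,t)=m_{[a,b]}(\zz),
\]
and the assertion follows from Lemma \ref{lem:mjcontrestab}. There is no real obstacle here; the only point requiring care is the index bookkeeping at the boundary cases $i\in\{0,n\}$, handled by the convention above. Conceptually, the lemma isolates a direction of perturbation (joint motion of the cluster $\calX_j$ of coinciding coordinates) along which neither the domain of the supremum nor the effective set of translates contributing to $F$ changes discontinuously, so the general continuity result for fixed sup-intervals from Lemma \ref{lem:mjcontrestab} applies verbatim.
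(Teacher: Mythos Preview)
Your proposal is correct and follows essentially the same approach as the paper: both observe that for $\zz\in A_{i,j,\xx}$ the interval $I_i(\zz)$ coincides with $I_i(\xx)=[x_i,x_{i+1}]$, so that $m_i=m_{[a,b]}$ on $A_{i,j,\xx}$ and Lemma~\ref{lem:mjcontrestab} applies. Your index bookkeeping (showing $i,i+1\notin\calX_j$ from $x_j\notin I_i(\xx)$, with the boundary convention handling $i=0$ and $i+1=n+1$) is in fact more detailed than the paper's terse one-line justification.
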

\begin{proof}
Set $[a,b]:=[x_i,x_{i+1}]$. Then for every $\yy\in A_{i,j,\xx}$ one has $y_j\not\in I_i(\xx)=I_i(\yy)$.  The asserted continuity now follows from Lemma \ref{lem:mjcontrestab}.\end{proof}

For the following recall that in case $K$ is finite-valued on $[-1,1]$ (e.g. if $K$ is non-singular and monotone), then the pure sum of translates function is uniformly continuous on the compact set $\oS\times [0,1]$: for any $\varepsilon>0$ there is $\delta_0>0$ such that we have
\begin{align}
\label{puresumfuncontinuity}
	f(\vv,t)-\ve\leq f(\uu,s)&\leq f(\vv,t)+\ve\\
	\notag&\left(\forall s,t\in [0,1], |s-t|<\delta_0 \:\textrm{and} \: \forall \uu, \vv \in \oS,\|\uu-\vv\|<\delta_0  \right).
\end{align}

\begin{prop} \label{prop:mjlscusc}
Let $K$ be a kernel function, let $n\in \NN$, let $\nu_1,\dots, \nu_n>0$, let $J$ be an $n$-field function, and let $j\in\{0,1, \dots, n\}$. Moreover, in case $K$ is non-singular, assume also $K(-1), K(1)>-\infty$.
\begin{abc}
	\item If $J$ is upper semicontinuous, then  $m_j:\oS\to \uR$ is  upper semicontinuous.
	\item If $J$ is  lower semicontinuous, then $m_j:\oS\to \uR$ is  lower semicontinuous.
	\item If $J$ is continuous in the extended sense, then $m_j: \oS\to \uR$ is continuous in the extended sense.
		\end{abc}
\end{prop}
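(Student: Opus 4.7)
The plan is to reduce the semicontinuity of each $m_j$ to the joint semicontinuity of $F$ on $\oS\times[0,1]$ together with a standard subsequence/projection argument. The key preliminary observation is that $f$ is jointly extended continuous on $\oS\times[0,1]$ (as a finite positive combination of compositions of the extended continuous kernel $K$ with continuous linear maps), while $J$, regarded as a function of the pair $(\yy,t)$ that happens to be constant in $\yy$, is jointly usc (resp.\ lsc, extended continuous) on $\oS\times[0,1]$ precisely when $J$ is usc (resp.\ lsc, extended continuous) on $[0,1]$. Since $\sup f<\infty$ and $\sup J<\infty$, no ``$-\infty+\infty$'' ambiguity arises, and hence $F=J+f$ inherits the same joint semicontinuity property as $J$.

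For part (a) I would fix $\xx\in\oS$ and an arbitrary sequence $\xx^{(k)}\to\xx$ in $\oS$. Since $F(\xx^{(k)},\cdot)$ is usc on the compact interval $I_j(\xx^{(k)})$, the supremum $m_j(\xx^{(k)})$ is either $-\infty$ (and then contributes nothing to $\limsup$) or it is attained at some $t_k\in I_j(\xx^{(k)})$. Passing to a subsequence I may assume $t_k\to t^*\in I_j(\xx)$, and the joint usc of $F$ then yields
\[
\limsup_{k\to\infty} m_j(\xx^{(k)})=\limsup_{k\to\infty} F(\xx^{(k)},t_k)\le F(\xx,t^*)\le m_j(\xx).
\]

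For part (b) I would pick $\alpha<m_j(\xx)$ and a near-maximizer $t^*\in I_j(\xx)$ with $F(\xx,t^*)>\alpha$, and then transplant $t^*$ into each moving interval by projection: set
\[
s_k:=\min\bigl(x^{(k)}_{j+1},\,\max(x^{(k)}_j,t^*)\bigr)\in I_j(\xx^{(k)}).
\]
Since $x^{(k)}_j\to x_j\le t^*\le x_{j+1}\leftarrow x^{(k)}_{j+1}$, a squeeze gives $s_k\to t^*$, whence by joint lsc of $F$
\[
\liminf_{k\to\infty} m_j(\xx^{(k)})\ge \liminf_{k\to\infty}F(\xx^{(k)},s_k)\ge F(\xx,t^*)>\alpha,
\]
and lsc of $m_j$ at $\xx$ follows on letting $\alpha\uparrow m_j(\xx)$. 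Part (c) is then immediate by combining (a) and (b).

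The delicate step is (b): the interval of supremization moves with $\yy$, and a candidate $t^*$ extracted from the limit interval $I_j(\xx)$ need not lie in any $I_j(\xx^{(k)})$. The projection device sidesteps this cleanly, but the degenerate cases $x_j=x_{j+1}$ deserve a quick check (there $I_j(\xx)$ collapses to $\{x_j\}$, yet the projected $s_k\in I_j(\xx^{(k)})$ still satisfy $s_k\to x_j$, so the argument goes through verbatim). The extra hypothesis $K(\pm1)>-\infty$ in the non-singular case is only needed to keep $f$ globally finite and to allow a clean application of the uniform continuity statement \eqref{puresumfuncontinuity} at boundary configurations; in any case joint extended continuity of $f$ alone is what powers the argument.
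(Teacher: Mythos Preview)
Your argument is correct, and it takes a genuinely different route from the paper's proof. The paper first disposes of the singular case by quoting Lemma~\ref{lem:mjcont}, and in the non-singular case relies on the \emph{uniform} continuity of the (then real-valued) pure sum of translates function $f$ via \eqref{puresumfuncontinuity}; it then carries out explicit $\varepsilon$--$\delta$ arguments, in part (a) splitting according to whether the test point lies in $I_j(\yy)\cap I_j(\xx)$ or in $I_j(\yy)\setminus I_j(\xx)$, and in part (b) separating the degenerate case $x_j=x_{j+1}$ from the non-degenerate one, in the latter pushing the near-maximizer strictly into the interior. Your approach instead exploits only the joint \emph{extended} continuity of $f$ on $\oS\times[0,1]$ and deduces joint upper/lower semicontinuity of $F$ directly; part (a) then becomes a standard attained-maximizer subsequence argument, and part (b) is handled uniformly by the projection $s_k=\min(x^{(k)}_{j+1},\max(x^{(k)}_j,t^*))$, which covers degenerate and non-degenerate intervals at once. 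A pleasant by-product is that your proof never invokes uniform continuity and hence does not actually use the extra hypothesis $K(-1),K(1)>-\infty$ in the non-singular case; the paper needs it precisely to make \eqref{puresumfuncontinuity} available.
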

\begin{proof} We may assume that $K$ is non-singular, otherwise the continuity of $m_j$ is contained in Lemma \ref{lem:mjcont} without any further conditions on $J$. So, according to the finiteness assumption, $f:\oS\to \RR$ is a real-valued continuous function, hence uniformly continuous and for any $\varepsilon>0$ we have \eqref{puresumfuncontinuity}.

\medskip \noindent (a) 	Let $\xx\in\oS$ and let $L>m_j(\xx)$ be arbitrary. Take $\ve>0$ with $L-\ve>m_j(\xx)$ and the corresponding $\delta_0$ from uniform continuity. If $J(x_j)=-\infty$, let $a:=L-\ve-\sup f$, and if $J(x_j)>-\infty$, let $a:=J(x_j)+\ve$. Similarly, if $J(x_{j+1})=-\infty$, let $b:=L-\ve-\sup f$, and if $J(x_{j+1})>-\infty$, let $b:=J(x_{j+1})+\ve $.
By the upper semicontinuity of  $J$ there is  $\delta<\delta_0$ such that for every $t\in (x_j-\delta,x_j+\delta)\cap [0,1]$ we have $J(t)<a$, and for every $t\in (x_{j+1}-\delta,x_{j+1}+\delta)\cap [0,1]$  we have $J(t)<b$.

Let now $\yy\in \oS$ with $\|\xx-\yy\|<\delta$. First, take $t\in [y_j,y_{j+1}]\cap [x_j,x_{j+1}]$. Then
\[
F(\yy,t)=f(\yy,t)+J(t)\leq f(\xx,t)+J(t)+\ve\leq m_j(\xx)+\ve.
\]
Second, take $t\in [y_j,y_{j+1}]\setminus [x_j,x_{j+1}]$. Then $\min\{|t-x_j|,|t-x_{j+1}|\}<\delta$, so  either $|t-x_j|<\de$ or $|t-x_{j+1}|<\de$. We get in the first case $F(\yy,t)=f(\yy,t)+J(t)\leq f(\xx,x_j)+\ve+a$, and in the second that
$F(\yy,t)=f(\yy,t)+J(t)\leq f(\xx,x_{j+1})+\ve+b$.
Altogether we obtain
\begin{align*}
m_j(\yy)&=\sup_{[y_j,y_{j+1}]}F(\yy,\cdot)\leq \max\{m_j(\xx)+\ve,f(\xx,x_j)+\ve+a,f(\xx,x_{j+1})+\ve+b\} \leq  L
\end{align*}
for every $\yy\in \oS$ with $\|\yy-\xx\|<\delta$. This proves that $\limsup_{\yy\to \xx}m_j(\yy)\leq m_j(\xx)$.

\medskip \noindent (b) We are to prove lower semicontinuity, i.e. $\liminf_{\yy\to\xx} m_j(\yy)\ge m_j(\xx)$, whence if $m_j(\xx)=-\infty$, then there remains nothing to prove. So we can assume without loss of generality that $m_j(\xx)>-\infty$ is finite.

We distinguish two cases, the first being when $I_j(\xx)$ is degenerate, i.e. $x_j=x_{j+1}$, in which case we necessarily have $J(x_j)>-\infty$. Let $\ve>0$ be arbitrary, and $\de_0$ be the bound furnished by the uniform continuity of $f$, as written above. Also, let $\de\le \de_0$ be so small, that $J(t)>J(x_j)-\ve$ all over $(x_j-\de,x_j+\de)$. Such a $\de>0$ exists because $J$ is lower semicontinuous. Now if $\|\yy-\xx\|<\de$, then $y_j \in (x_j-\de,x_j+\de)$, and we are led to
\begin{align*}
m_j(\yy)&\geq F(\yy,y_j)=f(\yy,y_j)+J(y_j)
\\& \geq f(\xx,x_j)-\ve+J(x_j)-\ve=F(\xx,x_j)-2\ve=m_j(\xx)-2\ve,
\end{align*}
proving lower semicontinuity of $m_j$ in this case.

Let now $I_j(\xx)$ be non-degenerate, i.e. $x_j<x_{j+1}$. Take $z \in I_j(\xx)$ be a point with $F(\xx,z)>m_j(\xx)-\ve$. By lower semicontinuity of $J$ there is $\de_1>0$ such that for points $t\in (z-\de_1,z+\de_1)\cap [0,1]$ we have $J(t)>J(z)-\ve$.
We thus find some $w\in (z-\de_1,z+\de_1)\cap (x_j,x_{j+1})$ with $J(w)>J(z)-\ve$. Take $0<\de \le \min\{\de_0,\de_1,w-x_j,x_{j+1}-w\}$ and some $\yy$ with $\|\yy-\xx\|<\de$, then $w\in I_j(\yy)$ and hence
$m_j(\yy)\ge F(\yy,w)=f(\yy,w)+J(w) \ge f(\xx,z)-\ve + J(z)-\ve= F(\xx,z)-2\ve \ge m_j(\xx)-3\ve$. This proves lower semicontinuity also for this case.

\medskip \noindent (c) follows obviously from (a) and (b).
\end{proof}

We can strengthen the assertion (b) in the above lemma, more precisely we can relax the assumption of lower semicontinuity. Indeed, for $J$ lower semicontinuity is a stronger assumption than just postulating the below ``two-sided limsup condition'' for $J$.
\begin{equation}\label{eq:Jbothside}
\limsup_{s\upto t }J(s)\geq J(t)\quad\text{and}\quad\limsup_{s\downto t }J(s)\geq J(t) \qquad (\forall t \in [0,1])
\end{equation}
Of course, at the endpoints $t=0$ and $t=1$ only the one meaningful condition is required. Under this condition, however, we will get lower semicontinuity on a somewhat restricted subset of $S$.
\begin{prop} Let $n\in\NN, ~ \nu_j>0 ~(j=1,\ldots,n)$, and $K$ be a kernel function which is either singular or finite-valued. Further, assume the two-sided limsup condition \eqref{eq:Jbothside} holds for the $n$-field function $J$.

Then for each $j=0,\ldots,n$ the function $m_j$ is lower semicontinuous on the set $S_j:=\{\xx \in \oS~:~ x_j<x_{j+1}\}$.

In particular, all $m_j$ are lower semicontinuous on $S$.
\end{prop}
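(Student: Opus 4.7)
The plan is to adapt the non-degenerate case of the proof of Proposition \ref{prop:mjlscusc}\,(b), replacing the (unavailable) lower semicontinuity of $J$ with the weaker two-sided limsup condition \eqref{eq:Jbothside}. First, if $K$ is singular, then Lemma \ref{lem:mjcont} already delivers extended continuity of $m_j$ on all of $\oS$, hence in particular lower semicontinuity on $S_j$, so one may assume $K$ is finite-valued. In that case $f:\oS\times[0,1]\to\RR$ is uniformly continuous, so for every $\ve>0$ there is some $\de_0>0$ such that \eqref{puresumfuncontinuity} holds.

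Next, fixing $\xx\in S_j$ (so $x_j<x_{j+1}$) and assuming without loss of generality $m_j(\xx)>-\infty$, I would pick $z\in[x_j,x_{j+1}]$ with $F(\xx,z)>m_j(\xx)-\ve$; then automatically $J(z)>-\infty$.

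The heart of the argument is then to produce an interior point $w\in(x_j,x_{j+1})$ enjoying both $|w-z|<\de_0$ and $J(w)>J(z)-\ve$. Here \eqref{eq:Jbothside} is the crucial replacement: if $z$ lies in the open interval $(x_j,x_{j+1})$ either one-sided limsup condition at $z$ supplies such a $w$; if $z=x_j$, the right-limsup condition $\limsup_{s\downto x_j}J(s)\ge J(x_j)$ supplies a $w$ arbitrarily close to $x_j$ from the right, and the assumption $\xx\in S_j$ leaves room in $(x_j,x_{j+1})$ to realize it; symmetrically for $z=x_{j+1}$. (At the ambient endpoints $z=0$ or $z=1$ only the remaining meaningful one-sided limsup is needed, which is exactly what \eqref{eq:Jbothside} postulates.) Finally, setting $\de:=\min\{\de_0,\,w-x_j,\,x_{j+1}-w\}>0$, any $\yy\in\oS$ with $\|\yy-\xx\|<\de$ satisfies $w\in(y_j,y_{j+1})$, so by \eqref{puresumfuncontinuity} and the choice of $w$,
\[
m_j(\yy)\ge F(\yy,w)=f(\yy,w)+J(w)\ge f(\xx,z)-\ve+J(z)-\ve\ge m_j(\xx)-3\ve,
\]
proving lower semicontinuity at $\xx$.

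The only genuine (and mild) obstacle is ensuring that the approximating point $w$ can be chosen strictly inside $I_j(\xx)$ with $J(w)$ close to $J(z)$; this is precisely why the statement must restrict to $S_j$, where the interval has nonempty interior, and why a one-sided limsup would not suffice when $z$ is an interior endpoint of $I_j(\xx)$. The ``in particular'' clause is then immediate, since for $\xx\in S$ one has $x_j<x_{j+1}$ for every $j\in\{0,1,\dots,n\}$, i.e., $S\subseteq S_j$.
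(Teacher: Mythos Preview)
Your proposal is correct and follows essentially the same route as the paper's proof: reduce to finite-valued $K$ via Lemma~\ref{lem:mjcont}, pick a near-maximizer $z\in I_j(\xx)$, and use uniform continuity of $f$ together with \eqref{eq:Jbothside} to pass to an interior point $w$ that survives under small perturbations of the nodes. The only cosmetic difference is that the paper, in the case $z\in(x_j,x_{j+1})$, simply keeps $w:=z$ (yielding the slightly sharper $m_j(\yy)\ge m_j(\xx)-2\ve$ there) rather than invoking \eqref{eq:Jbothside} also in that case; your uniform treatment is fine but unnecessary when $z$ is already interior.
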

\begin{proof}
Similarly to the above, Lemma \ref{lem:mjcont} gives more than asserted in case $K$ is singular, so we can restrict ourselves to the other case, when $K$ is finite-valued hence $f$ is uniformly continuous.

As $\cap_{j=0}^n S_j=S$, the last assertion is clearly entailed by the first.

So let $j$, and also $\xx\in S_j$ be fixed. As before, we can assume $m_j(\xx)>-\infty$, for in case $m_j(\xx)=-\infty$ lower semicontinuity at $\xx$ is obvious. Take $\ve>0$ arbitrarily. Further, take a point $z \in I_j(\xx)$ where $F(\xx,z)>m_j(\xx)-\ve$.

If $z \in (x_j,x_{j+1})$, then consider a $\de_0>0$ corresponding to $\ve>0$ regarding the uniform continuity of $f$ on $\oS\times [0,1]$, and take $\de<\min(\de_0, |z-x_j|,|z-x_{j+1}|)$, see \eqref{puresumfuncontinuity}. Then for all $\yy$ with $\|\yy-\xx\|<\de$ we have $z \in I_j(\yy)$, and we get $m_j(\yy) \ge F(\yy,z)=f(\yy,z)+J(z)>f(\xx,z)-\ve+J(z)=F(\xx,z)-\ve\ge m_j(\xx)-2\ve$.

The somewhat more complicated part is when $z$ is at some endpoint, say $z=x_j$. However, in this case according to the two-sided limsup condition \eqref{eq:Jbothside} for the given $\de_0>0$ there must exist some $w\in (x_j,x_j+\de_0)$ satisfying $J(w)>J(x_j)-\ve$. It follows that with $\de< \min(\de_0, |w-x_j|,|w-x_{j+1}|)$ we will have $w \in I_j(\yy)$ for all $\yy$ with $\|\yy-\xx\|<\de$, and it holds $m_j(\yy)\ge F(\yy,w)= f(\yy,w)+J(w)>f(\xx,x_j)-\ve+J(x_j)-\ve =F(\xx,x_j)-2\ve=F(\xx,z)-2\ve>m_j(\xx)-3\ve$. The case $z=x_{j+1}$ is similar.

Altogether we infer that for any $\ve>0$ there is an appropriate $\de>0$ such that for all $\yy$ with $\|\yy-\xx\|<\de$ it holds $m_j(\yy)\ge m_j(\xx)-3\ve$, which means lower semicontinuity of $m_j$ at $\xx$.
\end{proof}
\begin{cor}\label{cor:Jconttwosided} Let $n\in \NN$, $\nu_j>0$ ($j=1,\ldots,n$) be given positive reals, let $K$ be an either singular or finite valued kernel function and let $J$ be an upper semicontinuous $n$-field function satisfying the two-sided limsup condition \eqref{eq:Jbothside}. Then $\mv$ is extended continuous on $S$.
\end{cor}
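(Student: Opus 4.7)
The plan is to show extended continuity of each component $m_j$ of $\mv$ on $S$, by combining upper semicontinuity from Proposition \ref{prop:mjlscusc}(a) with lower semicontinuity from the preceding proposition, and then observing that extended continuity of $\mv$ on $S$ is equivalent to extended continuity of every $m_j$ on $S$.

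First I would dispose of the singular case trivially: if $K$ is singular, Lemma \ref{lem:mjcont} already yields that every $m_j$ is extended continuous on the whole closed simplex $\oS$, no further assumption on $J$ needed, and in particular $m_j$ is extended continuous on $S$. So for the remainder one may assume $K$ is finite-valued on $[-1,1]$, which in particular gives $K(-1),K(1)>-\infty$, so the hypothesis of Proposition \ref{prop:mjlscusc} is met.

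Next, applying Proposition \ref{prop:mjlscusc}(a) with the assumed upper semicontinuity of $J$, each $m_j$ is upper semicontinuous on $\oS$, hence in particular on $S$. Then, applying the proposition immediately before this corollary (lower semicontinuity under the two-sided limsup condition \eqref{eq:Jbothside}), each $m_j$ is lower semicontinuous on $S_j=\{\xx\in\oS:x_j<x_{j+1}\}$. Since the open simplex satisfies $S\subseteq\bigcap_{j=0}^{n}S_j$, each $m_j$ is lower semicontinuous on $S$.

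A function that is both upper and lower semicontinuous (in the extended sense) on $S$ is extended continuous on $S$; hence each $m_j$, and therefore $\mv=(m_0,\dots,m_n)$, is extended continuous on $S$. There is no real obstacle here beyond verifying that the hypotheses of the two invoked propositions are actually satisfied in each of the two kernel cases (singular versus finite-valued); the corollary is essentially an assembly step, and the non-trivial analytic work has already been done in Lemma \ref{lem:mjcont}, Proposition \ref{prop:mjlscusc}, and the lower-semicontinuity proposition preceding it.
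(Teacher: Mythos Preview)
Your proposal is correct and matches the paper's intended (implicit) argument: the corollary is stated without proof precisely because it is an immediate combination of Proposition \ref{prop:mjlscusc}(a) for upper semicontinuity and the preceding proposition for lower semicontinuity on $S$ (the latter already records that $S=\bigcap_j S_j$ and hence all $m_j$ are lower semicontinuous on $S$), with Lemma \ref{lem:mjcont} covering the singular case outright.
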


\begin{rem}
	 If the kernel is non-singular, one cannot drop the additional assumptions on $J$ from Proposition \ref{prop:mjlscusc} as compared to Lemma \ref{lem:mjcont}. Take e.g. $K\equiv0$ and $J=\log \car_{[1/4,1/2]}$ (which is upper semicontinuous but not lower semicontinuous and does not satisfy \eqref{eq:Jbothside}) and assertion (b) of Proposition \ref{prop:mjlscusc} fails, or $K\equiv0$ and $J=\log \car_{(1/4,1/2)}$ (which is not upper semicontinuous) and assertion (a) of Proposition \ref{prop:mjlscusc} fails.
	\end{rem}

In the following we will use yet another condition on the $n$-field function $J$, which relaxes \eqref{eq:Jbothside}.
This we will call ``the weak limsup condition''.
\begin{equation}\label{cond:weaklimsup}
\limsup_{s\to t }J(s)\geq J(t) \quad \text{for all $t \in [0,1]$.}
\end{equation}
This implies in particular that the finiteness domain $X^c$ has no isolated points, and together with the upper semicontinuity of $J$ yield the condition \eqref{cond:limsup} in Lemma \ref{lem:Nonmajorization} below. Our major example is $J:=\log \car_E$ with $E$ a finite union of non-degenerate closed intervals. The example can be generalized by allowing $E$ to be a perfect set, or invoking a positive (say, continuous) weight function $w$ on $E$ and taking $J:=\log (w\car_E)$.

\begin{lem} \label{lem:YS}
Let $n\in \NN$, $\nu_j>0$ ($j=1,\ldots,n$) be given positive reals, let $K$ be an either singular or finite valued kernel function and
suppose that the $n$-field function $J$ satisfies the weak limsup condition \eqref{cond:weaklimsup}.
Then for every $\xx\in Y\cap \partial S$  there is a sequence $(\xx_k)_{k\in \NN}$ in $Y\cap S$ such that $\xx_k\to \xx$ and
	\[
	\liminf_{k\to \infty}m_j(\xx_k)\geq m_j(\xx) \quad\text{for each $j\in \{0,1,\dots,n$}\}.
	\]
\end{lem}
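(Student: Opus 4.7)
\medskip
\textbf{Proof plan.} The claim is vacuous when $K$ is singular, and a direct perturbation argument when $K$ is finite-valued; I handle the two cases separately.

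\emph{Singular case.} If $K$ is singular, $K(0)=-\infty$, so for every $\xx\in\partial S$ at least one $m_j(\xx)=-\infty$: a coincidence $x_j=x_{j+1}$ with $j\in\{1,\dots,n-1\}$ makes the $i=j$ (and $i=j+1$) term in $f(\xx,x_j)$ equal $\nu_jK(0)=-\infty$, so $m_j(\xx)=F(\xx,x_j)=-\infty$; a boundary incidence $x_1=0$ makes $m_0(\xx)=F(\xx,0)=-\infty$ via $\nu_1 K(0)$, and $x_n=1$ is analogous. Hence $Y\cap\partial S=\emptyset$ and there is nothing to prove.

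\emph{Finite-valued case, setup.} Now $f$ is continuous, hence uniformly continuous, on $\oS\times[0,1]$. Fix $\xx\in Y\cap\partial S$; for each $k\in\NN$ I construct one $\xx_k\in Y\cap S$ with $\|\xx_k-\xx\|<1/k$ and $m_j(\xx_k)\ge m_j(\xx)-1/k$ for every $j$, which is more than enough. Since $\xx\in Y$, for each $j$ choose $t_j\in I_j(\xx)$ with $F(\xx,t_j)\ge m_j(\xx)-\tfrac{1}{3k}$ (so in particular $J(t_j)>-\infty$). Call $j$ \emph{regular} if $t_j\in(x_j,x_{j+1})\setminus\{x_1,\dots,x_n\}$, and set $s_j:=t_j$; otherwise $t_j$ equals some \emph{critical value} $c\in\{0,1,x_1,\dots,x_n\}$ (in particular every degenerate interval is critical). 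For each critical value $c$, list all critical indices $\ell_1<\cdots<\ell_m$ with $t_{\ell_i}=c$; by the weak limsup condition \eqref{cond:weaklimsup}, every punctured neighborhood of $c$ contains infinitely many $s$ with $J(s)\ge J(c)-\tfrac{1}{3k}$, from which pick $m$ such points within $1/(3nk)$ of $c$, disjoint from $\{x_1,\dots,x_n\}$, order them, and label them in this order $s_{\ell_1}<\cdots<s_{\ell_m}$.

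\emph{Perturbation and verification.} Build $\xx_k\in S$ by shifting any endpoint-sitting nodes ($x_1=0$ or $x_n=1$) slightly into $(0,1)$, and by spreading each coincident group of $\xx$-nodes at a critical value $c$ into strictly increasing values within $1/(2k)$ of $c$, interleaved with the already ordered shadows assigned to $c$ so that $s_\ell\in[x_{\ell,k},x_{\ell+1,k}]=I_\ell(\xx_k)$ for each critical $\ell$; the remaining nodes are nudged negligibly so that every regular $s_j=t_j$ stays in the interior of $I_j(\xx_k)$. With $s_j\in I_j(\xx_k)$ for all $j$, uniform continuity of $f$ yields $|f(\xx_k,s_j)-f(\xx,t_j)|<\tfrac{1}{3k}$ for $k$ large, and combined with $J(s_j)\ge J(t_j)-\tfrac{1}{3k}$ this gives
\[
m_j(\xx_k)\ge F(\xx_k,s_j)\ge f(\xx,t_j)+J(t_j)-\tfrac{2}{3k}=F(\xx,t_j)-\tfrac{2}{3k}\ge m_j(\xx)-\tfrac{1}{k},
\]
which in particular forces $\xx_k\in Y$. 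Letting $k\to\infty$ produces the required sequence. The only genuinely delicate step is the combinatorial interleaving of perturbed nodes with shadow points; this is made possible precisely by the weak limsup condition, which supplies arbitrarily many distinct near-optimal shadow candidates at each critical value, so the ordered shadows can always be threaded between the spread-out perturbed nodes.
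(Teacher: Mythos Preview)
Your proof follows essentially the same route as the paper's: the singular case is vacuous since $Y\cap\partial S=\emptyset$, and in the finite-valued case one perturbs coincident nodes using ``shadow'' points near each critical value---supplied by the weak limsup condition---together with uniform continuity of $f$. The paper organizes this block-by-block: for a maximal coincident group $x_\ell=\cdots=x_r$ it first fixes a near-maximizer $z_r\in I_r(\xx)$, then chooses $r-\ell+1$ shadow points $p_\ell<\cdots<p_r$ on one side of the block (and, crucially, strictly between the block and $z_r$ when $z_r\neq x_r$), keeps $x_\ell$ fixed, and moves only $x_{\ell+1},\dots,x_r$ to the midpoints of consecutive $p_i$'s; then it iterates over all blocks. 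This keeps $I_j$ unchanged for $j<\ell$ and $j>r$, so those $m_j$ drop by at most $\varepsilon$ via uniform continuity alone.

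Your global version---choosing all the $t_j$ first and then interleaving perturbed nodes with ordered shadows---is correct in principle, but the step you flag as ``genuinely delicate'' does need more care than you give it, and your fixed shadow radius $1/(3nk)$ is not quite safe. The concrete issue: suppose $c=x_i$ is a critical value, $t_{i-1}=c$ is critical, $t_i\in(c,x_{i+1})$ is regular, and the shadows near $c$ are forced (by the one-sided nature of \eqref{cond:weaklimsup}) to lie to the right of $c$. If $t_i$ happens to lie within $1/(3nk)$ of $c$, a shadow $s_{i-1}$ chosen merely ``within $1/(3nk)$ of $c$'' can overshoot $t_i$; then moving $x_{i,k}$ past $s_{i-1}$ to capture it in $I_{i-1}(\xx_k)$ would eject $t_i$ from $I_i(\xx_k)$. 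The fix is exactly what the paper does: choose the shadow radius adaptively at each critical value, smaller than the distance to the nearest regular $t_j$ on the relevant side (the paper's $\eta\le z_r-x_r$). With that adjustment your argument goes through; the paper's one-block-at-a-time scheme just makes the bookkeeping more transparent.
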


\begin{proof}
If $K$ is singular, then $Y\cap \partial S=\emptyset$, hence the assertion is trivially true.

So let $K$ be non-singular and, by assumption, then also finite valued, so we have uniform continuity of the pure sum of translates function $f$ as is given in \eqref{puresumfuncontinuity}.
We shall prove that for every $\ve>0$ and every $\delta>0$
there is $\yy\in  Y\cap S$ with $\|\yy-\xx\|<\delta $ and
	\[
	 m_j(\yy)\geq m_j(\xx)-\ve \quad \text{for every}\quad  j\in \{0,1,\dots,n\}.
	 \]
Then this will establish also the assertion.

So let $\ve>0$ and $\delta>0$ be given arbitrarily and let $\delta_0\in (0,\delta)$
be chosen to $\ve$ from the uniform continuity of the pure sum of translates function $f$.

\medskip\noindent
As $\xx\in \partial S$, there are $x_\ell=\cdots=x_{r}$ with $\ell<r$ such that no node with index smaller than $\ell$ coincides with $x_\ell$ and no node with index greater than $r$ coincides with $x_r$.
It can happen that $\ell=0$ or $r=n+1$, but not both, so  we can suppose by symmetry that $r<n+1$ and $x_r<1$. Note that $\xx\in Y$ and $I_\ell(\xx)=\{x_r\}$ entails that $m_\ell(\xx)=F(\xx,x_r)>-\infty$, whence also $J(x_r)>-\infty$.

Now if $\ell=0$, then \eqref{cond:weaklimsup} means $\limsup_{t\downto x_r}J(t)\geq J(x_r)>-\infty$. If on the other hand $\ell>0$ and $0 \le x_{\ell-1}<x_\ell=x_r$, then either $\limsup_{t\upto x_r}J(t)\geq J(x_r)>-\infty$ or $\limsup_{t\downto x_r}J(t)\geq J(x_r)>-\infty$, but then by symmetry we can assume the latter again.

It follows that the set $A:=\{t:t\in (x_r,x_{r+1}),\:J(t)>J(x_r)-\ve\} $ has $x_r$ as an accumulation point (from the right), hence for any $\eta>0$ we can take $r-\ell+1$ different points $p_{\ell}<\cdots<p_r \in A\cap (x_r,x_r+\eta)$. We pick a point $z_r\in[x_r,x_{r+1}]$, too, such that $F(\xx,z_r)>m_r(\xx)-\ve$. If $z_r>x_r$ then we  take $\eta:=\min(\de_0,z_r-x_r)$ such that the points $p_\ell,\ldots,p_r$ lie even in $(x_r,z_r)$; otherwise we  just take $\eta:=\min(\de_0,x_{r+1}-x_r)$.

Now we replace in $\xx$, the node $x_{\ell+1}$ by $x'_{\ell+1}:=(p_{\ell}+p_{\ell+1})/2$, the node $x_{\ell+2}$ by $x'_{\ell+2}:=(p_{\ell+1}+p_{\ell+2})/2$, ..., and the node $x_r$ by $x_r':=(p_{r-1}+p_{r})/2$; we write $\xx'$ for the new node system. We then have $p_{i}\in (x'_{i},x'_{i+1})$  and $J(p_i)>J(x_r)-\ve $ for every $i=\ell,\dots, r$.
By construction $z_r \in I_r(\xx')=[x'_r,x_{r+1}]$, too, unless $z_r=x_r$.

\medskip\noindent  Let $j\in \{0,1,\dots,n\}$ be arbitrary. If $j<\ell$ or $j>r$, then we have for $t\in [x'_j,x'_{j+1}]=[x_j,x_{j+1}]$ that
\[
m_j(\xx')\geq F(\xx',t)=f(\xx',t)+J(t)\geq f(\xx,t)-\ve+J(t),
\]
(also using uniform continuity), so we obtain $m_j(\xx')\geq m_j(\xx)-\ve$.

\medskip\noindent If $\ell\leq j \le r$ we have  $p_j\in[x_j',x_{j+1}']\cap A$ and thus, with another reference to uniform continuity (see \eqref{puresumfuncontinuity}), conclude
\begin{align*}
m_j(\xx')&\geq F(\xx',p_j)=f(\xx',p_j)+J(p_j)\geq f(\xx,x_r)-\ve+J(p_j)\\
&\geq   f(\xx,x_r)-\ve+J(x_r)-\ve=F(\xx,x_r)-2\ve.
\end{align*}
Since for $\ell\le j<r$ it holds $m_j(\xx)=F(x_r)$, it follows that  $m_j(\xx')\ge m_j(\xx)-2\ve$, too. It remains to take care of the case $j=r$. In fact, if $j=r$ and $z_r=x_r$, too, then the above furnishes $m_r(\xx')\ge F(\xx,x_r)-2\ve=F(\xx,z_r)-2\ve \ge m_r(\xx)-3\ve$.

So let finally $j=r$ and $z_r>x_r$, hence also $z_r\in I_r(\xx')$. Then by uniform continuity of $f$ and $\|\xx-\xx'\|<\de_0$ we are led to
\begin{align*}
m_r(\xx')\geq f(\xx',z_r)+J(z_r)\geq f(\xx,z_r)-\ve +J(z_r)
=F(\xx,z_r)-\ve \geq m_r(\xx)-2\ve.
\end{align*}
In all, we find for each $j\in\{0,1,\ldots,n\}$ the inequality $m_j(\xx')\ge m_j(\xx)-3\ve$.

Repeating this argument for all $k<n$  distinct maximal groups of coinciding nodes we therefore get $m_j(\xx^{(k)})\ge m_j(\xx)-3k\ve$ and $\xx^{(k)}\in  Y\cap S$, which finishes the proof.
\end{proof}

By analyzing the previous proof one can also see the following.
\begin{lem} \label{lem:YSdense}Let $n\in \NN$, $\nu_j>0$ ($j=1,\ldots,n$) be given positive reals, let $K$ be an either singular or finite valued kernel function and suppose that the finiteness domain $X^c$ of the $n$-field function $J$ has no isolated points.
    Then $Y \cap S$ is dense in $Y$.
\end{lem}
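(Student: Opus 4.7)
The plan is to distill the perturbation construction from the proof of Lemma~\ref{lem:YS}, keeping only the qualitative requirement $\xx_k \in Y$ (rather than any quantitative $\liminf$-comparison of $m_j$-values). Under this relaxation the full weak limsup condition \eqref{cond:weaklimsup} becomes unnecessary, and the ``no isolated points'' hypothesis on $X^c$ alone will suffice. As a first step I would reduce to the case when $K$ is finite-valued on $[-1,1]$: in the singular case $Y \cap \partial S = \emptyset$ by the remark in the excerpt, so $Y \subseteq S$ and the density is trivial.

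Fix now $\xx \in Y \cap \partial S$ and pick, exactly as in Lemma~\ref{lem:YS}, a maximal group of coinciding nodes $x_\ell = \cdots = x_r =: c$ with $\ell < r$. Since $\xx \in Y$ we have $m_\ell(\xx) = F(\xx,c) > -\infty$, and because $K$ is finite-valued this forces $J(c) > -\infty$, i.e., $c \in X^c$. The ``no isolated points'' hypothesis then makes $c$ an accumulation point of $X^c$, and the available side of accumulation is always compatible with the geometry: if $\ell = 0$ then $c = 0$ and accumulation can occur only from the right; if $r = n+1$ then $c = 1$ and accumulation can occur only from the left; in the generic case $0 < \ell < r < n+1$ at least one of the two sides lies inside $(x_{\ell-1}, x_{r+1})$ and is therefore available. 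By symmetry suppose right-side accumulation.

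I would then pick $r - \ell + 1$ distinct points $p_\ell < \cdots < p_r$ of $X^c$ in $(c, c + \eta)$ for a small $\eta > 0$ with $c + \eta < x_{r+1}$, and apply the midpoint-insertion of Lemma~\ref{lem:YS}: set $x'_\ell := c$ and replace $x_{\ell+1}, \ldots, x_r$ by $x'_j := (p_{j-1} + p_j)/2$. Then $p_i$ lies in the interior of $I_i(\xx')$ for each $i \in \{\ell, \ldots, r\}$, and by the finiteness of $K$ one has $m_i(\xx') \geq F(\xx', p_i) = J(p_i) + f(\xx', p_i) > -\infty$. For indices $j \notin \{\ell, \ldots, r\}$ the interval $I_j(\xx')$ either coincides with $I_j(\xx)$ or, in the case $j = \ell - 1$, still equals $I_{\ell-1}(\xx)$ (since $x'_\ell = x_\ell$); hence $m_j(\xx') > -\infty$ follows from $\xx \in Y$ together with the finiteness of $f(\xx', \cdot)$. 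Iterating this construction over the finitely many maximal groups of coinciding nodes, with $\eta$'s chosen small enough that the successive perturbations do not interfere, produces $\xx' \in Y \cap S$ arbitrarily close to $\xx$.

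The main (minor) obstacle, compared with Lemma~\ref{lem:YS}, is that ``no isolated points'' does not a priori specify the side of accumulation, so the ``spread right'' reduction used there is not automatic; one must case-split on the available side and, if necessary, use the symmetric ``spread left'' midpoint construction. Once the direction is chosen appropriately, the rest of the argument goes through verbatim.
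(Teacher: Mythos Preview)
Your proposal is correct and follows essentially the same approach as the paper's own proof: both identify a maximal block of coinciding nodes, use that the common value $c$ lies in $X^c$ and is therefore an accumulation point of $X^c$, pick $r-\ell+1$ nearby points $p_\ell<\cdots<p_r$ of $X^c$, apply the midpoint-insertion to spread the block, verify membership in $Y$, and iterate. Your treatment is in fact slightly more explicit than the paper's on two points: you state the reduction to finite-valued $K$ up front (the paper's argument tacitly uses $f(\xx,c)>-\infty$, which needs this), and you spell out the case split on the side of accumulation of $X^c$ at $c$, whereas the paper handles this with a brief ``e.g.''/symmetry remark.
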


\begin{proof} We only need to prove that any $\xx\in Y\cap \partial S$ can be arbitrarily well approximated from $Y\cap S$. So fix $\xx\in Y\cap\partial S$.
 Then there are $x_\ell=\cdots=x_{r}$ with $\ell<r$ such that no node with index smaller than $\ell$ coincides with $x_\ell$ and no node with index greater than $r$ coincides with $x_r$. It can happen that $\ell=0$ or $r=n+1$, but not both, so  we can suppose by symmetry that $r<n+1$ and $x_r<1$. Note that $\xx\in Y$ and $I_\ell(\xx)=\{x_r\}$ entails that $m_\ell(\xx)=F(\xx,x_r)>-\infty$, hence also $J(x_r)>-\infty$. Take $\delta>0$ arbitrarily.

\medskip\noindent  The set $[0,1]\setminus X=X^c$  has $x_r$ as an accumulation point,
so, e.g., the  set
$A:=\{t:t\in (x_r,x_{r+1}), J(t)>-\infty\} $ has $x_r$ as an accumulation point, too. Hence for any $\eta$ with $0<\eta<\min\{\delta,x_{r+1}-x_r\}$ we can take $r-\ell+1$ different points $p_{\ell}<\cdots<p_r \in A\cap (x_r,x_r+\eta)$. Now we replace in $\xx$, the node $x_{\ell+1}$ by $x'_{\ell+1}:=(p_{\ell}+p_{\ell+1})/2$, the node $x_{\ell+2}$ by $x'_{\ell+2}:=(p_{\ell+1}+p_{\ell+2})/2$,
...,
and the node $x_r$ by $x_r':=(p_{r-1}+p_{r})/2$; we write $\xx'$ for the new node system. We then have $p_{i}\in (x'_{i},x'_{i+1})$  and $J(p_i)>-\infty $ for every $i=\ell,\dots, r$. It follows that $m_i(\xx')>-\infty$  for every $i=\ell,\dots, r$, while $\|\xx-\xx'\|<\delta$. Note also that $I_j(\xx)=I_j(\xx')$ for  $j<i$ and $j>r$, hence $m_j(\xx')>-\infty$ in view of $m_j(\xx)>-\infty$, and we find $\xx'\in Y$, too. 

Repeating this argument for all $k<n$ distinct maximal groups of coinciding nodes we therefore obtain $\xx^{(k)}\in Y\cap S$ with $\|\xx-\xx^{(k)}\|< \delta$.
\end{proof}

\begin{lem} \label{lem:WdenseinY}
Let $n\in \NN$, $\nu_j>0$ ($j=1,\ldots,n$) be given positive reals, let $K$ be an either singular or finite valued kernel function and suppose that the finiteness domain $X^c$ of the $n$-field function $J$ has no isolated points. Then the set
\begin{equation}\label{eq:Wdef}
	W:=\{\ww:\ww\in Y\cap S,\: \rint [w_j,w_{j+1}]\cap X^c\neq\emptyset\text{ for } j=0,1,\dots,n\}
\end{equation}
is dense in $Y$, where $\rint$ denotes the relative interior within $[0,1]$.
\end{lem}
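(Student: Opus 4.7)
By Lemma \ref{lem:YSdense}, $Y\cap S$ is dense in $Y$, so it suffices to show that every $\xx\in Y\cap S$ can be approximated by elements of $W$. Fix such $\xx$ and $\de>0$. I will construct $\ww \in W$ with $\|\ww - \xx\| < \de$ by first selecting witness points in $X^c$ and then interlacing the new nodes between them.

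Call an index $j \in \{0,1,\dots,n\}$ \emph{good} if $\rint I_j(\xx) \cap X^c \neq \emptyset$, and \emph{bad} otherwise. Since $\xx\in Y$, each $I_j(\xx) \cap X^c$ is non-empty; hence for bad $j$ this intersection lies in the relative boundary of $I_j(\xx)$ in $[0,1]$, which equals $\{x_1\}$ for $j=0$, $\{x_n\}$ for $j=n$, and $\{x_j, x_{j+1}\}$ for interior $j$. Select a witness $q_j \in X^c$ for each $j$: in the good case take any $q_j \in \rint I_j(\xx) \cap X^c$; in the bad case set $q_j := x_{j+1}$ if $x_{j+1} \in X^c$, otherwise $q_j := x_j$.

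The central step is the verification $q_0 < q_1 < \cdots < q_n$. The only delicate configuration is two consecutive bad indices $j, j+1$. Here the no-isolated-points assumption on $X^c$ forces $x_{j+1} \notin X^c$: otherwise non-isolation of $x_{j+1}$ would supply $X^c$-points in every small neighborhood of $x_{j+1}$, forcing them to land in $\rint I_j(\xx)$ or $\rint I_{j+1}(\xx)$ and contradicting badness of both. Thus in the bad-bad case non-emptiness of $I_j(\xx)\cap X^c$ and $I_{j+1}(\xx)\cap X^c$ forces $x_j, x_{j+2} \in X^c$, the recipe yields $q_j = x_j$ and $q_{j+1} = x_{j+2}$, and $q_j < x_{j+1} < q_{j+1}$. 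The remaining cases (good-good, good-bad, bad-good) are immediate from $q_j \in I_j(\xx)$, $q_{j+1} \in I_{j+1}(\xx)$, and the fact that for good $k$ the witness $q_k$ lies in the relative interior of $I_k(\xx)$.

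With the strict chain in hand, observe that $x_j \in [q_{j-1}, q_j]$ for each $j=1,\dots,n$, so the non-empty open interval $(q_{j-1}, q_j)$ has $x_j$ in its closure; pick $w_j \in (q_{j-1}, q_j)$ with $|w_j - x_j| < \de$. Then $\ww \in S$ since $0 \le q_0 < w_1$, $w_j < q_j < w_{j+1}$, and $w_n < q_n \le 1$. Each $q_j$ lies in $\rint I_j(\ww)$ by construction; moreover $q_j \neq w_i$ and $|q_j - w_i| < 1$ for every $i$ (as $w_i \in (0,1)$), so every $K(q_j - w_i)$ is finite, $F(\ww, q_j)$ is finite, and $m_j(\ww) \ge F(\ww, q_j) > -\infty$. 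Therefore $\ww \in Y\cap S$ with $q_j \in \rint I_j(\ww)\cap X^c$ for every $j$, i.e., $\ww \in W$, and $\|\ww-\xx\|<\de$. The principal technical obstacle is the chain verification, and specifically the use of the no-isolated-points hypothesis to exclude the bad-bad configuration in which $x_{j+1}$ would be forced to serve as the common $X^c$-witness.
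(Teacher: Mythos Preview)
Your proof is correct and takes a genuinely different route from the paper's. The paper argues by induction on the number of ``exceptional'' indices: given an exceptional $j$ (your ``bad'' $j$), it uses non-isolation of the witness endpoint $x_j\in X^c$ to find a nearby $z\in X^c$ on the \emph{other} side of $x_j$, then slides the single node $x_j$ into $(z,x_j)$, which simultaneously puts $z$ into $\rint I_{j-1}$ and the old $x_j$ into $\rint I_j$, strictly reducing the exceptional count; iteration finishes the job. By contrast, you do a one-shot global construction: select witnesses $q_j\in X^c$ for all $j$, prove the strict chain $q_0<\cdots<q_n$, and interlace the new nodes between consecutive witnesses. The paper's approach is more modular and avoids the four-case ordering analysis; yours avoids iteration and yields $\ww$ in a single explicit step. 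The point at which the no-isolated-points hypothesis enters is the same in both arguments (ruling out a shared boundary witness), but you pack it into the bad--bad case of the chain verification rather than into an inductive step. One small remark: your sentence ``forces $x_j,\,x_{j+2}\in X^c$'' tacitly assumes $0<j<n-1$; the endpoint situations $j=0$ or $j+1=n$ are in fact vacuous (your own argument shows $x_{j+1}\notin X^c$ there contradicts the already-established membership), and it would be cleaner to say so explicitly.
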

\begin{proof}
According to Lemma \ref{lem:YSdense},
$Y\cap S$ is dense in $Y$. We need to prove  that for all $\xx\in Y \cap S$ either $\xx \in W$, or for any $\ve>0$ there are points of $W$ closer to $\xx$ than $\ve$.

So let $\ve>0$ and $\xx \in Y \cap S$ be fixed arbitrarily. Denote by $k\le n+1$ the number of  indices $j\in \{0,1,\dots,n\}$ with $\rint [x_j,x_{j+1}]\cap X^c =\emptyset$ (call such indices ``exceptional''). We prove the assertion by induction on $k$. If $k=0$ then $\xx \in W$, and then there remains nothing to prove. So assume $k>0$ and consider the statement holding for all smaller values of the number of exceptional indices.

As $k>0$, there is an index $j$ with $\rint [x_j,x_{j+1}]\cap X^c =\emptyset$.
This implies either $J(x_j)>-\infty$ or $J(x_{j+1})>-\infty$ (or both), because otherwise we would have $F|_{I_j(\xx)}\equiv -\infty$, hence $m_j(\xx)=-\infty$, contradicting to $\xx \in Y$. Suppose that, e.g., $J(x_j)>-\infty$ holds (the other case being similar). Note that in this case we must have $j>0$ and $x_j>0$, because for $j=0$ and $\xx\in S$, we have $x_0=0\in \rint I_j(\xx)$, and $\rint I_j(\xx)\cap X^c=\emptyset$ prohibits $x_0\in X^c$, i.e., we must have $J(x_0)=-\infty$. 

Let $\eta:=\min(\ve,x_{j+1}-x_j,x_j-x_{j-1})$.
Since $\xx\in Y\cap S$, we have $\eta>0$.
According to the assumption, $x_j$ is a limit point of $X^c$. As $F$ is identically $-\infty$ on $(x_j,x_j+\eta)\subset \rint I_j(\xx)$, we therefore
have some points $z \in (x_j-\eta,x_j) \subset (x_{j-1},x_j)$ satisfying $J(z) >-\infty$.

Now we define $\yy$ to be the node system obtained from $\xx$ by changing $x_j$ to some point $y_j\in (z,x_j)$ (which is possible by construction), and retaining all the other nodes putting $y_i:=x_i~ (i\ne j)$. Then $\|\yy-\xx\|=|y_j-x_j|<x_j-z<\eta\le \ve$. Also, $y_{j-1}:=x_{j-1}\le x_j-\eta <z < y_j<x_j<y_{j+1}:=x_{j+1}$, so  together with $\xx$ also $\yy \in S$. It is easy to see that we also have $\yy \in Y$, because for the only two intervals changed $m_{j-1}(\yy)\ge F(\yy,z)=f(\yy,z)+J(z)>-\infty$ and $m_j(\yy)\ge F(\yy,x_j)=f(\yy,x_j)+J(x_j)>-\infty$, the latter holding because $x_j \in (0,1) \setminus \{y_1,\ldots,y_n\}$, whence $f(\yy,x_j)>-\infty$, while $J(x_j)>-\infty$ by construction.

Note also that $z \in \rint I_{j-1}(\yy)$ and $x_j\in \rint I_j(\yy)$, while $\rint I_i(\xx)=\rint I_i(\yy)$ for all the other indices $i$, so  the number of exceptional indices for $\yy$ is less than that for $\xx$.

By the inductive hypothesis we thus have some $\ww \in W$ such that for the node system $\yy \in Y\cap S$ the distance is $\|\ww-\yy\|< \ve$. The assertion is proved.
\end{proof}

\begin{lem}\label{lem:W}
Let $n\in \NN$, $\nu_j>0$ ($j=1,\ldots,n$) be given positive reals, let $K$ be an either singular or finite valued kernel function and suppose that the $n$-field function $J$ satisfies \eqref{cond:weaklimsup} (so its finiteness domain $X^c$ has no isolated points).
	
Then for every $\xx\in Y\cap S$ there is a sequence $(\xx_k)_{k\in \NN}$ in $W$, where $W$ is defined in \eqref{eq:Wdef}, such that
$\xx_k\to \xx $ and
\[
\liminf_{k\to \infty}m_j(\xx_k)\geq m_j(\xx) \quad\text{for each $j\in \{0,1,\dots,n$}\}.
\]
\end{lem}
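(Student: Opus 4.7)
I would induct on the number $k$ of \emph{exceptional} indices of $\xx$---those $j\in\{0,1,\ldots,n\}$ for which $\rint I_j(\xx)\cap X^c=\emptyset$ (precisely the obstructions to $\xx\in W$). The inductive statement I would set up is the quantitative one: for every $\ve>0$ there exists $\xx^\ve\in W$ with $\|\xx^\ve-\xx\|<\ve$ and $m_i(\xx^\ve)\ge m_i(\xx)-C_n\ve$ for each $i$, where $C_n$ depends only on $n$. Then taking $\ve=1/k$ and $\xx_k:=\xx^{1/k}$ gives the required sequence. The singular case is immediate: by Lemma~\ref{lem:mjcont} the functions $m_i$ are extended continuous, so Lemma~\ref{lem:WdenseinY} yields density of $W$ in $Y$ and the liminf assertion then follows from continuity. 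Hence the real work lies in the finite-valued case, where $f$ is uniformly continuous on $\oS\times[0,1]$.

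\textbf{Inductive step.} Assume $k\ge 1$ and fix an exceptional index $j$. Since $\xx\in Y$ forces $m_j(\xx)>-\infty$ while $J\equiv -\infty$ on $\rint I_j(\xx)$, at least one of $J(x_j),J(x_{j+1})$ is finite. By symmetry I assume $J(x_j)>-\infty$; this forces $j\ge 1$, since for $j=0$ one has $0\in \rint I_0$, and the only surviving possibility $J(x_1)>-\infty$ is handled by the mirror-image shift of $x_1$ to the right; the case $j=n$ is just an instance of the present one with $x_n$ shifted leftward. Apply \eqref{cond:weaklimsup} at $x_j$: there are points of $X^c$ accumulating at $x_j$ with $J$-values approaching $J(x_j)$; as $J\equiv -\infty$ on $(x_j,x_{j+1})$, they must lie on the left. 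Pick such a $z\in(x_{j-1},x_j)\cap X^c$ with $J(z)>J(x_j)-\ve$ (whose existence already shows $I_{j-1}(\xx)$ is itself non-exceptional) and a near-optimal witness $z_{j-1}\in I_{j-1}(\xx)$ with $F(\xx,z_{j-1})>m_{j-1}(\xx)-\ve$. Shift only the $j$-th coordinate, setting $x_j':=x_j-\de$ with $\de>0$ smaller than the uniform-continuity modulus $\de_0(\ve)$ of $f$, smaller than $\min\{x_j-x_{j-1},x_{j+1}-x_j,x_j-z\}$, and smaller than $x_j-z_{j-1}$ whenever $z_{j-1}<x_j$; call the resulting system $\xx'$.

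\textbf{Verifications.} I would then check: (a) $x_j\in\rint I_j(\xx')\cap X^c$, so index $j$ is no longer exceptional, while $z\in\rint I_{j-1}(\xx')\cap X^c$ keeps index $j-1$ non-exceptional, and all other intervals are untouched, so the exceptional count dropped by one; (b) $\xx'\in Y\cap S$, since every $m_i(\xx')>-\infty$ by the witnesses identified; (c) the $m_i$ bounds: for $i\neq j-1,j$ the interval is unchanged and uniform continuity of $f$ gives $m_i(\xx')\ge m_i(\xx)-\ve$; $I_j(\xx')\supset I_j(\xx)$ gives $m_j(\xx')\ge m_j(\xx)-\ve$; for $m_{j-1}$ I split on whether $z_{j-1}<x_j$ (then $z_{j-1}\in I_{j-1}(\xx')$ and uniform continuity gives the bound directly) or $z_{j-1}=x_j$ (then I evaluate at the auxiliary witness $z$, obtaining $F(\xx',z)\ge f(\xx,x_j)-2\ve+J(x_j)-\ve = m_{j-1}(\xx)-3\ve$ by uniform continuity of $f$ and the choice of $z$). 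Iterating at most $n+1$ times with budget $\ve/(n+1)$ at each step lands inside $W$ with total displacement less than $\ve$ and total loss at most $C_n\ve$.

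\textbf{Main obstacle.} The delicate sub-case is $z_{j-1}=x_j$: the only available witness of $m_{j-1}(\xx)$ sits exactly at the node being shifted, and after the perturbation it drifts into $I_j(\xx')$. The replacement by a nearby $z\in(x_{j-1},x_j')$ with $J(z)$ close to $J(x_j)$ is precisely what \eqref{cond:weaklimsup} is designed to provide, and this is where the weak limsup hypothesis is genuinely used---over and above the pure density statement of Lemma~\ref{lem:WdenseinY}. The remaining bookkeeping at the endpoints $j=0$ and $j=n$ is a routine mirror-image variant and requires no new idea.
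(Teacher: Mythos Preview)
Your proposal is correct and follows essentially the same route as the paper's proof: dispose of the singular case via continuity of the $m_j$ together with the density of $W$ in $Y$, and in the finite-valued case repair one exceptional index at a time by sliding the relevant node $x_j$ slightly to the left (or right, by symmetry), using the weak limsup condition \eqref{cond:weaklimsup} to locate a point $z$ in $X^c$ just to the left of $x_j$ with $J(z)$ close to $J(x_j)$, and then checking the $m_i$ estimates via uniform continuity of $f$. The paper phrases this as ``repeating the argument for all exceptional indices'' rather than as a formal induction, but the mechanism is identical.

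Two small remarks. First, in your sub-case $z_{j-1}=x_j$ the final equality $F(\xx',z)\ge F(\xx,x_j)-3\ve = m_{j-1}(\xx)-3\ve$ tacitly assumes $F(\xx,x_j)=m_{j-1}(\xx)$, whereas your choice of $z_{j-1}$ only gives $F(\xx,x_j)>m_{j-1}(\xx)-\ve$; the honest bound is $m_{j-1}(\xx)-4\ve$, which of course changes nothing. Second, your computation $f(\xx',z)\ge f(\xx,x_j)-2\ve$ requires $|z-x_j|<\delta_0$ in addition to $\|\xx'-\xx\|<\delta_0$; this is available from the limsup condition (the approximating points accumulate at $x_j$), but you should say so when you pick $z$. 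Otherwise your treatment of this delicate sub-case is in fact more explicit than the paper's, which simply asserts the existence of a suitable $z_{j-1}\in[x_{j-1},x_j)\cap(x_j-\delta_0,x_j)$ without separating the two possibilities.
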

\begin{proof}
We can assume that $K$ is non-singular, since otherwise the functions $m_j$ are continuous according to Lemma \ref{lem:mjcont}, and then any sequence  $(\xx_k)_{k\in \NN}$ in $W$ converging to $\xx$ will do (and there exists some such sequence by Lemma \ref{lem:WdenseinY}).

We shall prove that for every $\ve, \delta>0$
there is $\yy\in W$ with $\|\yy-\xx\|<\delta$ and
	 \[
	 m_j(\yy)\geq m_j(\xx)-\ve\quad\text{for every $j\in \{0,1,\dots,n\}$,}
	 \]
which will clearly imply the statement of the lemma. Let $\delta_0\in (0,\delta)$ be chosen  to $\ve$ from the uniform continuity of the pure sum of translates function $f$ (see \eqref{puresumfuncontinuity}).

Let $j\in \{0,1,\dots,n\}$, and suppose  $\rint I_j(\xx)\cap X^c=\emptyset$. Since $\xx\in Y$, this implies either $J(x_j)>-\infty$ or $J(x_{j+1})>-\infty$ (or both). Suppose that, e.g., $J(x_j)>-\infty$ holds (the other case being similar). Note that in this case we must have $j>0$ and $x_j>0$, because $\xx\in S$, $J(x_j)>-\infty$ and  $\rint I_j(\xx)\cap X^c=\emptyset$.
Since  $\limsup_{t\to x_j}J(t)\geq J(x_j)$,---by the weak limsup condition \eqref{cond:weaklimsup}---we must have  $\limsup_{t\upto x_j}J(t)\geq J(x_j)>-\infty$, hence also $\limsup_{t\to x_j}F(\xx,t)\geq F(\xx,x_j)$. Moreover, there exists  $z_{j-1}\in[x_{j-1},x_j)\cap (x_j-\delta_0,x_j)$ with  $F(\xx,z_{j-1})\geq m_{j-1}(\xx)-\ve$.
Let $x'_j\in (z_{j-1},x_j) \subset (x_j-\delta_0,x_j)$, and
replace in $\xx$ the node $x_j$ by $x'_j$, the new node system being denoted by $\xx'$. We obviously have $x_j \in \rint I_j(\xx')\cap X^c$ hence in particular $\rint I_j(\xx') \cap X^c \neq \emptyset$.

\medskip\noindent Let $i\in \{0,1,\dots,n\}$. If $i<j-1$ or $i\geq j$ we have for $t\in [x_i,x_{i+1}]\subseteq [x'_i,x'_{i+1}]$ that
\[
m_i(\xx')\geq f(\xx',t)+J(t)\geq f(\xx,t)-\ve+J(t),
\]
so we obtain $m_i(\xx')\geq m_i(\xx)-\ve$.
	
\medskip\noindent  For $i=j-1$, we have  $z_{j-1}\in I_{j-1}(\xx')$ hence
\begin{align}\label{eq:ijminusone}
m_i(\xx')=m_{j-1}(\xx') & \geq f(\xx',z_{j-1})+J(z_{j-1})\geq f(\xx,z_{j-1})-\ve+J(z_{j-1})\\
\notag&=F(\xx,z_{j-1})-\ve\geq m_{j-1}(\xx)-2\ve.
\end{align}
Repeating this argument for all indices $j$, for which $\rint I_j(\xx)\cap X^c=\emptyset$, we arrive at some $\xx^* \in W$ with $\|\ww-\xx^*\|<n\de$ and $m_i(\xx^*)>m_i(\xx)-2 n\ve$ ($i=0,1,\ldots,n$) concluding the proof.
\end{proof}

\begin{cor}\label{cor:seqcont}
Let $n\in \NN$, $\nu_j>0$ ($j=1,\ldots,n$) be given positive reals, let $K$ be an either singular or finite valued kernel function and suppose $J$ is an upper semicontinuous $n$-field function and, moreover, satisfies
\eqref{cond:weaklimsup} (thus  its finiteness domain $X^c$ has no isolated points).

Then $J$ also satisfies the so-called ``limsup condition''
\begin{equation}\label{cond:limsup}
\limsup_{s\to t} J(s)=J(t)\quad \text{for every $t\in[0,1]$,}
\end{equation}
and we have that for every $\xx\in Y$ there is a sequence $(\xx_k)_{k\in \NN}$ in $W$ such that $\xx_k\to \xx $ and
\[
\lim_{k\to \infty}m_j(\xx_k)=m_j(\xx)\quad\text{for each $j\in \{0,1,\dots,n\}$}.
\]
\end{cor}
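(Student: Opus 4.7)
The corollary makes two distinct claims, which I would treat in order. First, the limsup condition \eqref{cond:limsup} is almost immediate: upper semicontinuity of $J$ gives $\limsup_{s\to t}J(s)\le J(t)$ for every $t\in[0,1]$, while \eqref{cond:weaklimsup} provides the reverse inequality $\limsup_{s\to t}J(s)\ge J(t)$, so equality holds. This also ensures that $X^c$ has no isolated points, as asserted, because an isolated $t\in X^c$ would have $\limsup_{s\to t}J(s)=-\infty<J(t)$.

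For the approximation statement, the plan is to combine Lemmas \ref{lem:YS} and \ref{lem:W} with the upper semicontinuity of each $m_j$, which follows from Proposition \ref{prop:mjlscusc}(a). Note that since $K$ is either singular or finite valued, the extra hypothesis $K(-1),K(1)>-\infty$ required in Proposition \ref{prop:mjlscusc} is automatic (trivially in the singular case, and by finiteness in the other), so each $m_j:\oS\to\uR$ is upper semicontinuous. Consequently for any sequence $\xx_k\to\xx$ we have $\limsup_k m_j(\xx_k)\le m_j(\xx)$; since moreover $\xx\in Y$ forces $m_j(\xx)\in\RR$, it suffices to produce a sequence $(\xx_k)\subset W$ with $\xx_k\to\xx$ and $\liminf_k m_j(\xx_k)\ge m_j(\xx)$ for each $j\in\{0,1,\dots,n\}$.

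To construct such a sequence, I would split according to whether $\xx\in S$ or $\xx\in\partial S$. In the easy case $\xx\in Y\cap S$, Lemma \ref{lem:W} directly provides a sequence in $W$ with the required $\liminf$ property. In the boundary case $\xx\in Y\cap\partial S$, I would first apply Lemma \ref{lem:YS} to obtain a sequence $(\yy_k)_{k\in\NN}\subset Y\cap S$ with $\yy_k\to\xx$ and $\liminf_k m_j(\yy_k)\ge m_j(\xx)$ for each $j$, and then, for each $k$, apply Lemma \ref{lem:W} to $\yy_k$ to produce a sequence $(\xx_{k,\ell})_{\ell\in\NN}\subset W$ with $\xx_{k,\ell}\to\yy_k$ as $\ell\to\infty$ and $\liminf_\ell m_j(\xx_{k,\ell})\ge m_j(\yy_k)$. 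A standard diagonal extraction then yields $\xx_k:=\xx_{k,\ell(k)}\in W$ with $\xx_k\to\xx$ and $\liminf_k m_j(\xx_k)\ge m_j(\xx)$.

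The only (mild) obstacle is the bookkeeping in the diagonal selection: for each $k$ one picks $\ell(k)$ so large that simultaneously $\|\xx_{k,\ell(k)}-\yy_k\|<1/k$ and $m_j(\xx_{k,\ell(k)})>m_j(\yy_k)-1/k$ for every $j\in\{0,1,\dots,n\}$. Since these are finitely many conditions, each eventually satisfied by the $\liminf$ bound and the convergence $\xx_{k,\ell}\to\yy_k$, the choice is possible. Combining the resulting $\liminf_k m_j(\xx_k)\ge m_j(\xx)$ with the upper semicontinuity bound $\limsup_k m_j(\xx_k)\le m_j(\xx)$ yields $\lim_k m_j(\xx_k)=m_j(\xx)$ for every $j$, completing the argument.
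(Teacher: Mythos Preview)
Your proposal is correct and follows essentially the same approach as the paper: derive \eqref{cond:limsup} from upper semicontinuity plus \eqref{cond:weaklimsup}, then combine Lemma~\ref{lem:YS} and Lemma~\ref{lem:W} via a diagonal extraction, and finish using the upper semicontinuity of each $m_j$ from Proposition~\ref{prop:mjlscusc}(a). Your explicit case split $\xx\in Y\cap S$ versus $\xx\in Y\cap\partial S$ and your more careful diagonal bookkeeping make the argument slightly cleaner than the paper's version (which in fact appears to have the references to Lemmas~\ref{lem:YS} and~\ref{lem:W} interchanged).
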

\begin{proof}
Upper semicontinuity means $\limsup_{s\to t} J(s) \le J(t)$, while condition \eqref{cond:weaklimsup} is the reverse inequality, hence they together give \eqref{cond:limsup}.

Let $\xx \in Y$. According to Lemma \ref{lem:W} there is a sequence $(\xx_k)$ in $Y\cap S$ such that $\|\xx_k-\xx\|<\frac 1k$ and  $m_j(\xx_k)\ge m_j(\xx)-\frac1k$ for all $k\in \NN$ and for all $j\in \{0,1,\dots,n\}$. According to Lemma \ref{lem:YS} there are sequences $(\xx_{k,\ell})$  in $W$ such that $\|\xx_{k,\ell}-\xx_k\|<\frac 1\ell$ and $ m_j(\xx_{k,\ell})\ge m_j(\xx_k)-\frac1\ell$  for all $k,\ell\in \NN$ and for all $j\in \{0,1,\dots,n\}$. Consider the diagonal sequence $(\xx_{k,k})$ for which we have $\|\xx_{k,k}-\xx\|\leq \|\xx_{k,k}-\xx_k\|+\|\xx_{k}-\xx\|< \frac 2k$ and
\[
m_j(\xx_{k,k})\geq  m_j(\xx_k)-\frac1k\geq  m_j(\xx)-\frac2k
\]
for every  $k\in \NN$ and for all $j\in \{0,1,\dots,n\}$.
It follows that $\liminf_{k\to\infty} m_j(\xx_{k,k}) \ge m_j(\xx)$, while $\limsup_{k\to\infty} m_j(\xx_{k,k}) \le m_j(\xx) $ holds according to Proposition \ref{prop:mjlscusc} (a). Therefore, $\lim_{k\to\infty} m_j(\xx_{k,k}) = m_j(\xx)$ .
\end{proof}

\section{A result about the failure of strict majorization}
\label{sec:interproof}

\begin{lem}\label{lem:Nonmajorization} Let $n\in \NN$, $\nu_j>0$ ($j=1,\ldots,n$) be given positive reals, let $K$ be a monotone \eqref{cond:monotone} kernel function, and let $J$ be an $n$-field function satisfying the ``limsup condition'' \eqref{cond:limsup} (in particular, $J$ is upper semicontinuous).

Then \emph{strict majorization} $m_i(\xx)>m_i(\yy)$ for every $i=0,1,\ldots,n$ cannot hold between any two node systems $\xx,\yy\in Y$.
\end{lem}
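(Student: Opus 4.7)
The plan is to argue by contradiction: after reducing to the well-behaved subset $W$, I would perturb the kernel $K$ into a singular one and then invoke Theorem \ref{thm:mainold} for the perturbed problem.

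Suppose $\xx,\yy\in Y$ satisfy $m_i(\xx)>m_i(\yy)$ for every $i\in\{0,1,\ldots,n\}$, with gap $\delta:=\min_i(m_i(\xx)-m_i(\yy))>0$. The limsup condition \eqref{cond:limsup} implies both upper semicontinuity of $J$ and the weak limsup condition \eqref{cond:weaklimsup}, so Corollary \ref{cor:seqcont} applies and provides approximating sequences in $W$ with componentwise convergence of all $m_j$-values. Choosing approximants sufficiently close to preserve the gap, we may assume from now on that $\xx,\yy\in W$. In particular all nodes are distinct and lie in $(0,1)$, and every interval $I_j(\xx), I_j(\yy)$ meets $X^c$ in its relative interior.

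For $\eta>0$ define the perturbed kernel $K_\eta(t):=K(t)+\eta\log|t|$. Since $\log|\cdot|$ is concave on $(-1,0)\cup(0,1)$, satisfies \eqref{cond:monotone}, and is singular at $0$, the function $K_\eta$ is again a monotone kernel, now singular. Write $F_\eta, m_j^{(\eta)}, Y_\eta$ for the corresponding sum of translates function, interval maxima, and regularity set. Because $\xx,\yy\in W$, each relevant interval contains points of $X^c$ distinct from all nodes, at which $F_\eta$ is finite; hence $\xx,\yy\in Y_\eta$. The crux is to establish the convergence $m_j^{(\eta)}(\xx)\to m_j(\xx)$ as $\eta\downarrow 0$ for every $j$. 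The upper bound $m_j^{(\eta)}(\xx)\le m_j(\xx)$ is immediate from $\log|\cdot|\le 0$ on $[-1,1]$. For the lower bound, pick a maximizer $t_j^\star\in I_j(\xx)$ of the upper semicontinuous function $F(\xx,\cdot)$: if $t_j^\star\notin\{x_1,\ldots,x_n\}$, then $F_\eta(\xx,t_j^\star)\to F(\xx,t_j^\star)=m_j(\xx)$ directly; otherwise $K$ is necessarily non-singular and $J(t_j^\star)>-\infty$, so \eqref{cond:limsup} furnishes a sequence $t_m\to t_j^\star$ in $I_j(\xx)\setminus\{x_1,\ldots,x_n\}$ with $F(\xx,t_m)\to m_j(\xx)$, and a diagonal argument in $(m,\eta)$ yields $\liminf_{\eta\downarrow 0}m_j^{(\eta)}(\xx)\ge m_j(\xx)$.

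Combined with the trivial inequality $m_i^{(\eta)}(\yy)\le m_i(\yy)$, for all sufficiently small $\eta>0$ we obtain $m_i^{(\eta)}(\xx)>m_i(\xx)-\delta/2>m_i(\yy)\ge m_i^{(\eta)}(\yy)$ for every $i$, so strict majorization persists in the perturbed problem on $Y_\eta$. But $K_\eta$ is singular and monotone and $J$ is upper semicontinuous, so Theorem \ref{thm:mainold} forbids strict majorization on $Y_\eta$, yielding the desired contradiction. The main obstacle is precisely the convergence $m_j^{(\eta)}(\xx)\to m_j(\xx)$: the maximizer $t_j^\star$ can coincide with a node $x_k$, where the added term $\eta\log|t-x_k|$ diverges to $-\infty$; to overcome this, the membership $\xx\in W$ (providing interior points of $X^c$ in each $I_j$ away from all nodes) and the limsup condition \eqref{cond:limsup} (permitting one to approximate $m_j(\xx)$ along such points) are jointly indispensable.
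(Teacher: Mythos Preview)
Your overall strategy coincides with the paper's: reduce to $\xx,\yy\in W$ via Corollary~\ref{cor:seqcont}, perturb $K$ to a singular monotone kernel, and then invoke Theorem~\ref{thm:mainold}. The gap is in your treatment of the convergence $m_j^{(\eta)}(\xx)\to m_j(\xx)$ when the maximizer $t_j^\star$ coincides with a node $x_k$.

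You assert that \eqref{cond:limsup} ``furnishes a sequence $t_m\to t_j^\star$ in $I_j(\xx)\setminus\{x_1,\ldots,x_n\}$ with $F(\xx,t_m)\to m_j(\xx)$''. But the limsup condition is one-sided: it guarantees approximants $t_m\to t_j^\star$ with $J(t_m)\to J(t_j^\star)$, yet gives no control over which side of $t_j^\star$ they come from. If $t_j^\star=x_j$ is the left endpoint of $I_j(\xx)$ and the approximants lie in $I_{j-1}(\xx)$, they are useless for bounding $m_j^{(\eta)}(\xx)$ from below. A concrete obstruction: take $n=1$, $K\equiv 0$, $J(t)=t$ on $[0,1/2]$ and $J(t)=1/4$ on $(1/2,1]$, and $\xx=(1/2)$. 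Then $J$ satisfies \eqref{cond:limsup}, $m_1(\xx)=J(1/2)=1/2$, but with $K_\eta(t)=\eta\log|t|$ one computes $m_1^{(\eta)}(\xx)=\sup_{t\in(1/2,1]}\bigl(1/4+\eta\log(t-1/2)\bigr)\to 1/4$, not $1/2$. So $m_j^{(\eta)}(\xx)\to m_j(\xx)$ fails in general, and no diagonal argument can rescue it.

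The paper repairs exactly this defect by an additional perturbation of the \emph{node system}: for each boundary point $t_i=x_j\in\B$ it picks an approximant $t^*$ from \eqref{cond:limsup} (on whichever side it happens to lie), then shifts the node $x_j$ to $x_j':=\tfrac12(\min(t,t^*)+\max(t,t^*))$, so that one of $t,t^*$ lands in the relative interior of each of the two adjacent intervals $I_{j-1}(\xx')$ and $I_j(\xx')$. This guarantees points $s_i\in\rint I_i(\xx')$ with $F(\xx',s_i)>m_i(\xx)-2\ve$ for every $i$, whence (using $K^{(\eta)}=K+\log_-(|t|/\eta)$, which agrees with $K$ once $|t|\ge\eta$) the perturbed interval maxima $m_i^{(\eta)}(\xx')$ are bounded below by $m_i(\xx)-2\ve$ for $\eta$ smaller than the distance of the $s_i$ to the nodes. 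Your argument can be salvaged by inserting precisely this node-perturbation step before passing to the singular kernel.
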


\begin{proof}
We can assume that $K$ is real valued, i.e., non-singular, otherwise the statement is contained in Theorem  \ref{thm:mainold}. (Recall that once we have \eqref{cond:monotone}, non-singularity entails finiteness of $K$.)

We argue by contradiction and suppose that for some $\xx,\yy\in Y$ one has
\begin{equation}\label{eq:mjxy}
m_j(\xx)>m_j(\yy)\quad \text{for each $j=0,\dots,n$.}
\end{equation}
According to Corollary \ref{cor:seqcont} we can assume that $\xx, \yy \in W\subseteq S$.

Since $K$ is real valued the pure some of translates function $f$ is continuous, hence uniformly continuous, on the compact set $\ol{S}\times[0,1]$. Therefore, for each  $\ve>0$, in particular for $\ve:=\frac12\min\{m_j(\xx)-m_j(\yy):j=0,1,\dots,n\}$,
there is $\delta>0$ such that \eqref{puresumfuncontinuity} holds.

Since $m_j$ is a supremum, there exist points $t_j \in I_j(\xx)$  with $F(\xx,t_j)>m_j(\xx)-\ve$ ($j=0,1,\ldots,n$). This implies that $J(t_j)>-\infty$, because $m_j(\xx)>-\infty$.  Let $\T:=\{t_0,t_1,\ldots,t_n\}$.

Note that it can happen that $\#\T <n+1$, in case two points $t_i$ coincide; that is the case when $t_i=x_{i+1}=t_{i+1}$. However, only this type of coincidence can happen, as $t_i\in I_i(\xx)$, and $\xx\in S$ excludes intersection of more than two of the base intervals $I_i(\xx)$.

Let us separate two subsets of $\T$: the ``interior part'' $\I:=\{t_i \in\T~:~ t_i\in \rint I_i(\xx)\}$ and the ``boundary part'' $\B:=\{t_i\in \T ~:~ t_i\in (I_i(\xx)\setminus\rint I_i(\xx))\}=\T \cap \{x_1,\ldots,,x_n\}$. Let $\alpha_0:=\min\{ |t-x_k|: t \in \I, k=1,\ldots,n\}$ if $\I\neq \emptyset$, otherwise set $\alpha_0:=1$. Since for $t\in\I$ we have $t=t_i\in \rint I_i(\xx)$ for some $i\in\{0,1,\ldots,n\}$, this quantity is positive.

Take $\alpha\in (0,\alpha_0)\cap (0,\delta)$  arbitrarily. In view of the assumption \eqref{cond:limsup}, for each $t\in \B$ there is a  $t^*\in (t-\al,t+\al)\cap [0,1]$, distinct from $t$, such that $J(t^*) > J(t)-\ve$, too (and in fact there are infinitely many such points). Note that $t\in \B$ meant that we had (with one or two indices $i$) $t=t_i=x_j$ for some uniquely determined node $x_j$ with $1\le j\le n$ (all $x_j$ are different for $\xx \in S$). For this index $j$, corresponding to the selected $t$, let us write $w_{j-1}:=\min(t,t^*)$ and $z_j:=\max(t,t^*)$.

Now we define a new node system $\xx'$ with the following simple rule. If for some $j$ the node $x_j$ is matching with some of the points $t\in \T$ (i.e. $t=t_i=x_j\in \B$), then we replace $x_j$ by $x_j':=\frac12(w_{j-1}+z_j)$. If $x_j\not\in \B=\T\cap \{x_1,\ldots,x_n\}$, then we just leave $x_j':=x_j$.
Note that if at all $x_j'\ne x_j$, then we have $|x_j'-x_j|=|x_j-\frac{w_{j-1}+z_j}{2}|=|x_j-\frac{t+t^*}{2}|=\frac{|t^*-x_j|}{2} <\al/2$. This means in particular that if $\al<\min \{x_{j+1}-x_j:j=0,\ldots,n\}$, too (which minimum is positive because $\xx\in S$), then we have $0<x_1'<\cdots<x_n'<1$, hence the ordering of the nodes remained intact, and $\xx'\in S$, too.

There are two important properties of the new node system $\xx'$. The first one has already been observed: $\|\xx'-\xx\|< \al/2\leq \de$. Therefore we also have $|f(\xx',s')-f(\xx,s)|<\ve$ for all $s,s'\in [0,1]$ with $|s'-s|<\de$. It follows that in particular $|F(\xx',w_{j-1})-F(\xx,x_j)|, |F(\xx',z_j)-F(\xx,x_j)| <\ve$ and $F(\xx',w_{j-1}),F(\xx',z_j)>F(\xx,x_j)-\ve$. Second, we claim that for each $i\in\{0,1,\dots,n\}$ there is $s_i\in\rint I_i(\xx')$ satisfying
\begin{equation}\label{eq:miF}
F(\xx',s_i)>m_i(\xx)-2\ve.
\end{equation}
To see this, consider first the case when $t_i\in \I$. Then $t_i\in \rint I_i(\xx)$, and by construction its distance from $\{x_1,\ldots,x_n\}$ is at least $\alpha_0>\al$, hence even for $\xx'$ we have $s_i:=t_i\in \rint I_i(\xx')$. By choice of the $t_i$ and uniform continuity of $f$, however, we must have $F(\xx',s_i)=F(\xx',t_i)>F(\xx,t_i)-\ve>m_i(\xx)-2\ve$, as needed. Second, consider the case when $t_i\in \B$, so  $t_i=x_j$ for some $j$. By construction $w_{j-1}<x_j'<z_j$. So if $i=j-1$ then $x'_{j-1}\le x_{j-1}+\al/2<x_j-\al/2<w_{j-1}<x_j'$ and $w_{j-1} \in \rint I_{j-1}(\xx')$. So if $i=j-1$ we can choose $s_i=w_{j-1}\in \rint I_{j-1}(\xx')$ and have $F(\xx',s_i)=F(\xx',w_{j-1})>F(\xx,x_j)-\ve>m_i(\xx)-2\ve$, for in case $i=j-1$ and $x_j=t_{j-1}$ we had $F(\xx,x_j)=F(\xx,t_{j-1})>m_{j-1}(\xx)-\ve$. Similarly, if $i=j$, then we choose $s_j=z_j$, noting that $x_j'<z_j<x_j+\al/2<x_{j+1}-\al/2<x_{j+1}'$ and $z_j\in\rint I_j(\xx')$. Then we have with $s_j:=z_j$ that $F(\xx',s_i)=F(\xx',z_{j})>F(\xx,x_j)-\ve>m_i(\xx)-2\ve$, for in case $i=j$ and $x_j=t_j$ we had $F(\xx,x_j)=F(\xx,t_j)>m_j(\xx)-\ve$.
This establishes the asserted, second property of $\xx'$.

In all, from \eqref{eq:miF} we get for each $i=0,1,\ldots,n$ that $m_i(\xx')\ge F(\xx',s_i)>m_i(\xx)-2\ve$. It follows that, in particular, $\xx'\in W$, too.

\medskip\noindent
Finally, set $\eta_0:=\min\{|x'_j-s_i|:i,j=1,\dots,n\}>0$, and consider the singular kernel functions $K^{(\eta)}(t):=K(t)+\log_{-}(|t|/\eta)$ where $\log_-=\min (\log,0)$.  Denote by $F^{(\eta)}$ and $m_j^{(\eta)}$ the sum of translates function and the interval maxima functions for the kernel $K^{(\eta)}$ and the same $n$-field function $J$.
Since $K^{(\eta)}\leq K$, we immediately conclude $m_j^{(\eta)}\leq m_j$ for each $j\in \{0,1,\dots,n\}$. Note that for $\eta<\eta_0$ we have
$F^{(\eta)}(\xx',s_j)=F(\xx',s_j)$, because all the $\log_{-}$ terms vanish. Therefore, for $\eta<\eta_0$ and $j\in \{0,1,\dots,n\}$ we conclude $m_j^{(\eta)}(\xx') \ge F(\xx',s_j)> m_j(\xx)-2\ve>m_j(\yy)\ge m_j^{(\eta)}(\yy)$, so  $m_j^{(\eta)}(\xx')> m_j^{(\eta)}(\yy)$. Now, since $K^{(\eta)}$ is singular and, by $\yy\in W$, each $m_j^{(\eta)}(\yy)$, $j=0,1,\dots,n$ is finite---hence $\yy$ is in the regularity set $Y^{(\eta)}$ belonging to the perturbed kernel and the original field function---a contradiction with Theorem  \ref{thm:mainold} ensues.
\end{proof}

\section{The case of upper semicontinuous field functions}

\begin{lem}\label{lem:Japproxlem}
Let $g_k:[a,b]\to \uR$ be upper semicontinuous functions with $g_{k+1}\le g_k$ for each $k\in \NN$.
Let $g(x)=\lim_{k\to \infty} g_k(x)$.
Then we have  $\lim_{k\to\infty}\max g_k=\max g$.
\end{lem}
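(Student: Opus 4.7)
The plan is to reduce the statement to a compactness-plus-upper-semicontinuity argument, very much in the spirit of the classical Dini-type results, using that the pointwise limit of a decreasing sequence of upper semicontinuous functions is itself upper semicontinuous.

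First I would record the structural facts. Each $g_k$ is upper semicontinuous on the compact interval $[a,b]$, hence attains its supremum; call a maximizer $x_k$, so $g_k(x_k)=\max g_k$. The numerical sequence $\max g_k$ is monotone non-increasing because $g_{k+1}\le g_k$, so its limit $L:=\lim_{k\to\infty}\max g_k\in \uR$ exists. Likewise, $g=\inf_k g_k$ is upper semicontinuous as a pointwise infimum of upper semicontinuous functions, and so $\max g$ is attained on $[a,b]$ (with the convention that $\max g=-\infty$ if $g\equiv -\infty$).

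The easy inequality $\max g\le L$ is immediate: since $g\le g_k$ pointwise, one has $\max g\le \max g_k$ for every $k$, and passing to the limit gives $\max g\le L$.

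For the reverse inequality $L\le \max g$ I would use compactness of $[a,b]$ to extract a convergent subsequence $x_{k_j}\to x^*\in[a,b]$ of the maximizers. Fix any index $m\in \NN$; then for all $j$ large enough one has $k_j\ge m$, hence $g_{k_j}\le g_m$, and consequently $g_{k_j}(x_{k_j})\le g_m(x_{k_j})$. Taking $\limsup$ as $j\to\infty$ and using upper semicontinuity of $g_m$ at $x^*$ yields
\[
L=\lim_{j\to\infty} g_{k_j}(x_{k_j})\le \limsup_{j\to\infty} g_m(x_{k_j})\le g_m(x^*).
\]
This holds for every $m$, so letting $m\to\infty$ gives $L\le \inf_m g_m(x^*)=g(x^*)\le \max g$, finishing the argument.

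The only conceptual subtlety, and hence the only place one has to be a bit careful, is the handling of the extended real value $-\infty$: if $L=-\infty$ the claim is trivial, while if $L>-\infty$ the maximizers $x_k$ satisfy $g_k(x_k)\ge L>-\infty$, so there is no issue taking the limsup above in $\uR$. Apart from that, the proof is essentially a one-line application of compactness and upper semicontinuity, and I do not expect any genuine obstacle.
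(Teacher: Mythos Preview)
Your proof is correct and follows essentially the same approach as the paper: both arguments take maximizers $x_k$ of $g_k$, extract a convergent subsequence $x_{k_j}\to x^*$, use the monotonicity $g_{k_j}\le g_m$ for $k_j\ge m$ to dominate by a fixed $g_m$, and then invoke upper semicontinuity of $g_m$ at $x^*$ to obtain $L\le g_m(x^*)$, whence $L\le g(x^*)\le\max g$. The only cosmetic difference is that the paper unpacks the upper semicontinuity via an explicit $\alpha>\max g$ and a $\delta$-neighborhood, while you phrase it directly via the $\limsup$ inequality; the content is the same.
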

\begin{proof}
By the upper semicontinuity the maxima in question exist,
we set $\gamma:=\max g$ and $\gamma_k:=\max g_k$, and observe that $\gamma^*:=\lim_{k\to \infty} \gamma_k$ exists and satisfies $\gamma^*\geq \gamma$.

Take $t_k\in[a,b]$ with $g_k(t_k)=\gamma_k$.
By compactness we may pass to a subsequence, and
assume right away that $t_k\to t^*$ for some  $t^*\in[a,b]$.
Let $\alpha>\gamma\geq g(t^*)$.
Since $g_k(t^*)\to g(t^*)$ there is $k_0\in \NN$ such that $g_{k_0}(t^*)<\alpha$.
Since $g_{k_0}$ is upper semicontinuous,
there is $\delta>0$ such that for each $s\in[t^*-\delta, t^*+\delta]\cap [a,b]$ we have $g_{k_0}(s)<  \alpha$.
Since $t_n\to t^* $,
there is $n_0\in \NN$ such that $t_n\in[t^*-\delta, t^*+\delta] $ for every $n\geq n_0$.
Now if $n\geq \max\{k_0,n_0\}$, then $t_n\in [t^*-\delta, t^*+\delta]$ and
\[
\gamma_n=g_n(t_n)\leq g_{k_{0}}(t_n)<\alpha.
\]
This yields $\gamma^*\leq \gamma$, proving the statement.
\end{proof}
		
\blue{ITT TARTUNK}	
\begin{prop}[No strict majorization]\label{prop:nonmajorize}
Let $n\in \NN$, $\nu_1,\dots,\nu_n>0$, let $K$ be a monotone  \eqref{cond:monotone}  kernel function and let $J$ be an upper semicontinuous $n$-field function. Then there are no $\xx,\yy\in Y$ with $m_j(\xx)>m_j(\yy)$ for each $j\in \{0,1,\dots,n\}$.
\end{prop}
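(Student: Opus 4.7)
The strategy is to reduce this proposition, where $J$ is only upper semicontinuous, to the already established Lemma~\ref{lem:Nonmajorization}, which assumed the stronger limsup condition \eqref{cond:limsup}. To do so, I would approximate $J$ from above by a decreasing sequence of upper semicontinuous $n$-field functions $J_k$ that \emph{do} satisfy \eqref{cond:limsup}, apply Lemma~\ref{lem:Nonmajorization} to each $J_k$, and pass to the limit via Lemma~\ref{lem:Japproxlem}.

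For every $k\in\NN$ I would set
\[
J_k(t):=\sup\{\,J(s)\mid s\in[0,1],\ |s-t|\le 1/k\,\}\qquad(t\in[0,1]).
\]
Upper semicontinuity of $J$ together with compactness ensures that the supremum is attained, say by $s_0=s_0(t,k)\in[t-1/k,t+1/k]\cap[0,1]$. A routine verification shows that $J_k$ is a bounded above, upper semicontinuous $n$-field function with $J_k\downarrow J$ pointwise (the limit is $\max\{J(t),\limsup_{s\to t,\,s\ne t}J(s)\}$, which equals $J(t)$ by upper semicontinuity of $J$). The crucial additional property is that $J_k$ satisfies \eqref{cond:limsup}: indeed, $J_k(t)=J(s_0)$, and for $s$ approaching $t$ from the appropriate side (both sides when $|s_0-t|<1/k$, or the side toward which $s_0$ lies when $|s_0-t|=1/k$) we still have $|s_0-s|\le 1/k$, so $J_k(s)\ge J(s_0)=J_k(t)$, giving $\limsup_{s\to t}J_k(s)\ge J_k(t)$; combined with upper semicontinuity this is equality. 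Hence Lemma~\ref{lem:Nonmajorization} applies to each $J_k$ paired with the unchanged kernel $K$.

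Suppose now, for contradiction, that $\xx,\yy\in Y$ satisfy $m_i(\xx)>m_i(\yy)$ for every $i\in\{0,1,\ldots,n\}$. Let $m_i^{(k)}$ and $Y^{(k)}$ denote the interval maxima and regularity set associated with $J_k$. Since $J_k\ge J$, we have $m_i^{(k)}(\zz)\ge m_i(\zz)>-\infty$ for $\zz\in\{\xx,\yy\}$ and all $i$, so $\xx,\yy\in Y^{(k)}$. By Lemma~\ref{lem:Nonmajorization} applied to $J_k$, for each $k$ there exists an index $i_k\in\{0,1,\ldots,n\}$ with $m_{i_k}^{(k)}(\xx)\le m_{i_k}^{(k)}(\yy)$. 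By pigeonhole some fixed $i$ occurs infinitely often, say along a subsequence $(k_\ell)$. For each fixed $\zz$, the functions $F_k(\zz,\cdot)=J_k+f(\zz,\cdot)$ form a decreasing sequence of (extended) upper semicontinuous functions on the compact interval $I_i(\zz)$ with pointwise limit $F(\zz,\cdot)$, so Lemma~\ref{lem:Japproxlem} yields $m_i^{(k)}(\zz)\to m_i(\zz)$ as $k\to\infty$. Specializing to $\zz=\xx$ and $\zz=\yy$ and passing to the limit along $(k_\ell)$ gives $m_i(\xx)\le m_i(\yy)$, contradicting our assumption.

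The only real delicacy is verifying the limsup condition for $J_k$, particularly the edge case where the supremum defining $J_k(t)$ is attained only at a boundary point $s_0=t\pm 1/k$ of the averaging window (so that only a one-sided approach to $t$ witnesses the inequality); but since \eqref{cond:limsup} asks only that the full two-sided $\limsup_{s\to t}J_k(s)$ exceed $J_k(t)$, a one-sided witness already suffices. A minor additional check is that $J_k$ remains an $n$-field function, which is immediate since $J_k\ge J$ enlarges the finiteness domain.
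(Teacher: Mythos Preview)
Your argument is correct and follows essentially the same route as the paper: approximate $J$ from above by a decreasing sequence of upper semicontinuous field functions satisfying \eqref{cond:limsup}, invoke Lemma~\ref{lem:Nonmajorization} for each approximant, and contradict via Lemma~\ref{lem:Japproxlem}. The paper simply takes $J^{(k)}$ to be \emph{continuous} (the standard decreasing approximation of an upper semicontinuous function), which makes \eqref{cond:limsup} automatic and spares your verification for the local-sup construction; it also derives the contradiction more directly, observing that for large $k$ one has $m_j^{(k)}(\xx)\ge m_j(\xx)>m_j^{(k)}(\yy)$ for \emph{all} $j$ simultaneously, so no pigeonhole step is needed.
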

	
 \begin{proof}
We may assume that $K$ is non-singular, otherwise the statement is already contained in Theorem \ref{thm:mainold}.
Let $J^{(k)}$ be a sequence of continuous $n$-field functions such that $J^{(k)}\downto J$ pointwise.
Denoting the corresponding interval maxima by $m_j^{(k)}$, Lemma \ref{lem:Japproxlem} yields $m_j^{(k)}\downto m_j$ pointwise on $\oS$.
Suppose that for some $\xx,\yy\in Y$  we have $m_j(\xx)>m_j(\yy)$ for each $j\in \{0,1,\dots,n\}$. Then for large enough $k$ we have
$m^{(k)}_j(\xx)\geq m_j(\xx)>m^{(k)}_j(\yy)\geq m_j(\yy)>-\infty$  for every $j\in \{0,1,\dots,n\}$
(in particular, $\xx,\yy\in Y^{(k)}$, where $Y^{(k)}$ denotes the regularity set for the case of the  field function $J^{(k)}$, which is continuous, hence satisfies \eqref{cond:limsup}). This is, however, an impossibility in view of Lemma \ref{lem:Nonmajorization}.
	\end{proof}

\begin{prop}\label{prop:eqexistsmm}
Let $n\in \NN$, $\nu_1,\dots,\nu_n>0$, let $K$ be a monotone \eqref{cond:monotone}  kernel function and let $J$ be an upper semicontinuous $n$-field function.
Then there is an equioscillation point $\ee\in Y$ with
\[
\mol(\ee)=\inf_{\xx\in \oS}\mol(\xx)=M(\oSS),
\]
\end{prop}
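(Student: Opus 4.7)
The plan is a double approximation. Since $J$ is bounded above and upper semicontinuous, pick continuous functions $J^{(k)}:[0,1]\to\RR$ with $J^{(k)}\downto J$ pointwise. Independently, for $\eta>0$ introduce the singular monotone kernels $K^{(\eta)}(t):=K(t)+\log_{-}(|t|/\eta)$ (as in Lemma~\ref{lem:Nonmajorization}). For every pair $(k,\eta)$, Theorem~\ref{thm:mainold} applied to $(K^{(\eta)},J^{(k)})$ yields an equioscillation point $\ww^{(k,\eta)}\in S$ with common value $L^{(k,\eta)}=M^{(k,\eta)}(\oSS)$. I then take the inner limit $\eta\to 0$ (with $k$ fixed) and afterwards the outer limit $k\to\infty$.

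For the inner limit, $m_j^{(k,\eta)}$ is continuous on $\oS$ by Lemma~\ref{lem:mjcont} (singular $K^{(\eta)}$), and $m_j^{(k)}$ is continuous by Proposition~\ref{prop:mjlscusc}(c) (continuous $J^{(k)}$, monotone $K$ with finite endpoint values). Since $K^{(\eta)}\upto K$ pointwise on $[-1,1]\setminus\{0\}$ and $F^{(k)}(\xx,\cdot)$ is continuous (so the finitely many node values do not affect suprema), one has $m_j^{(k,\eta)}\upto m_j^{(k)}$ pointwise and monotonically; by Dini's theorem this is uniform on the compact set $\oS$. The analogous statement holds for $\mol^{(k,\eta)}\upto\mol^{(k)}$. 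Extract a subsequence $\ww^{(k,\eta_m)}\to\ww^{(k)}\in\oS$; uniform convergence together with continuity of $m_j^{(k)}$ gives $L^{(k,\eta_m)}=m_j^{(k,\eta_m)}(\ww^{(k,\eta_m)})\to m_j^{(k)}(\ww^{(k)})$ for each $j$, so the common limit $L^{(k)}=M^{(k)}(\oSS)$ realizes $m_j^{(k)}(\ww^{(k)})=L^{(k)}$ for all $j$: $\ww^{(k)}$ is an equioscillation point for $(K,J^{(k)})$ attaining the minimax level $M^{(k)}(\oSS)$.

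For the outer limit, pick $\xx^*\in\oS$ with $\mol(\xx^*)=M(\oSS)$ (which exists by continuity of $\mol$, Lemma~\ref{lem:mbarcont}). Lemma~\ref{lem:Japproxlem} applied to $F^{(k)}(\xx^*,\cdot)\downto F(\xx^*,\cdot)$ gives $\mol^{(k)}(\xx^*)\downto M(\oSS)$, which combined with $M^{(k)}\ge M(\oSS)$ forces $L^{(k)}\downto M(\oSS)$. Extract a further subsequence $\ww^{(k_\ell)}\to\ee\in\oS$. For each $j$, continuity of $F^{(k)}(\ww^{(k)},\cdot)$ on the compact interval $I_j(\ww^{(k)})$ provides a maximizer $t_j^{(k)}$ with $F^{(k)}(\ww^{(k)},t_j^{(k)})=L^{(k)}$; pass to a further subsequence so $t_j^{(k_\ell)}\to t_j^*\in I_j(\ee)$. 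The decisive bound
\[
\limsup_{\ell\to\infty} J^{(k_\ell)}\bigl(t_j^{(k_\ell)}\bigr)\;\le\; J(t_j^*)
\]
comes from the two-step squeeze: for any fixed $k_0$ and $k_\ell\ge k_0$, monotonicity gives $J^{(k_\ell)}(t_j^{(k_\ell)})\le J^{(k_0)}(t_j^{(k_\ell)})\to J^{(k_0)}(t_j^*)$ by continuity of $J^{(k_0)}$, and then $J^{(k_0)}(t_j^*)\downto J(t_j^*)$ as $k_0\to\infty$. Combined with continuity of the pure sum of translates $f$, this yields
\[
M(\oSS)=\lim_\ell L^{(k_\ell)}\le J(t_j^*)+f(\ee,t_j^*)=F(\ee,t_j^*)\le m_j(\ee)\qquad (j=0,1,\ldots,n).
\]

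To conclude, continuity of $\mol$ and $\mol^{(k)}\ge\mol$ give
\[
\mol(\ee)=\lim_\ell\mol\bigl(\ww^{(k_\ell)}\bigr)\le\lim_\ell\mol^{(k_\ell)}\bigl(\ww^{(k_\ell)}\bigr)=\lim_\ell L^{(k_\ell)}=M(\oSS),
\]
whereas $\mol(\ee)\ge M(\oSS)$ is trivial; hence $\mol(\ee)=M(\oSS)$. Combined with $m_j(\ee)\ge M(\oSS)$ from the previous step, this forces $m_j(\ee)=M(\oSS)=\mol(\ee)$ for every $j$; in particular $m_j(\ee)$ is finite, so $\ee\in Y$ is the sought equioscillation point attaining the minimax. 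The main technical obstacle is the inner limit $\eta\to 0$: the Dini-based uniform convergence works only because $J^{(k)}$ is continuous (making $m_j^{(k)}$ continuous); a single-approximation attempt with the only-upper-semicontinuous $J$ would leave $m_j$ merely upper semicontinuous, and the equioscillation identity could drop under the singular perturbation.
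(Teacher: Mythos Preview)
Your overall strategy is sound and genuinely different from the paper's. The paper first treats the \emph{strictly concave} case directly: it takes a minimizer $\ee$ of $\mol$ and shows by node--perturbation (using the interval perturbation Lemma~\ref{lem:widening} and the Trivial Lemma~\ref{lem:trivi}) that $\ee$ must equioscillate; then it approximates a general monotone $K$ by the strictly concave $K^{(\eta)}(t)=K(t)+\eta\sqrt{|t|}$ and passes to the limit using only upper semicontinuity of $m_j$ (Proposition~\ref{prop:mjlscusc}(a)). You instead bypass the perturbation argument entirely by invoking Theorem~\ref{thm:mainold} through a double approximation (singular kernels $K^{(\eta)}$ plus continuous fields $J^{(k)}$). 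This is elegant in that it reduces everything to the already-known singular case; the price is the extra approximation layer in $J$, needed precisely so that $m_j^{(k)}$ is continuous and the inner limit can be controlled.

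There is, however, a genuine gap in the inner limit. Your claim that $m_j^{(k,\eta)}\upto m_j^{(k)}$ pointwise on $\oS$ is false on the part of $\partial S$ where $I_j$ degenerates: if $x_j=x_{j+1}$, then $m_j^{(k,\eta)}(\xx)=-\infty$ for every $\eta>0$ (by singularity of $K^{(\eta)}$), while $m_j^{(k)}(\xx)=F^{(k)}(\xx,x_j)$ is finite. So Dini's theorem does not apply to $m_j^{(k,\eta)}$ on $\oS$, and your line ``uniform convergence together with continuity of $m_j^{(k)}$ gives $m_j^{(k,\eta_m)}(\ww^{(k,\eta_m)})\to m_j^{(k)}(\ww^{(k)})$'' is not justified as written.

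The gap is easily repaired. Dini \emph{does} apply to $\mol^{(k,\eta)}\upto\mol^{(k)}$, since for $\mol$ the pointwise convergence holds everywhere on $\oS$ (the sup over $[0,1]$ of the continuous $F^{(k)}(\xx,\cdot)$ is unaffected by the finitely many node values). Thus $\mol^{(k,\eta_m)}(\ww^{(k,\eta_m)})\to\mol^{(k)}(\ww^{(k)})$, whence $\mol^{(k)}(\ww^{(k)})=L^{(k)}$. For the individual $m_j$ you only need the one-sided estimate
\[
L^{(k)}=\lim_m m_j^{(k,\eta_m)}(\ww^{(k,\eta_m)})\le \limsup_m m_j^{(k)}(\ww^{(k,\eta_m)})=m_j^{(k)}(\ww^{(k)}),
\]
which follows from $m_j^{(k,\eta)}\le m_j^{(k)}$ and continuity of $m_j^{(k)}$ (Proposition~\ref{prop:mjlscusc}(c)). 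Together with $\mol^{(k)}(\ww^{(k)})=L^{(k)}$, this forces $m_j^{(k)}(\ww^{(k)})=L^{(k)}$ for all $j$, and the rest of your argument goes through unchanged.
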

\begin{proof}
We may assume that $K$ is non-singular,
otherwise the statement is already known,
see Theorem  \ref{thm:mainold}.

First we shall assume that $K$ is strictly concave and hence by \eqref{cond:monotone} also strictly monotone.
Since $\mol$ is continuous on $\oS$ (see Lemma \ref{lem:mbarcont}),
it attains its
infimum on $\oS$.
Take any such minimum point $\ee$.
We claim that $\ee$ is an equioscillation point. Assume the contrary, i.e.,
that there is $j\in \{0,1,\dots,n\}$ with $m_j(\ee)<\mol(\ee)$.
Let us take the smallest such $j$.

\medskip
If $j=0$, then $e_1<1$
otherwise $I_0(\ee)=[0,1]$
and $I_1(\ee)=\ldots=I_n(\ee)=\{1\}$,
hence $m_0(\ee)\ge m_1(\ee)=\ldots =m_n(\ee)$.
So we can move the node $e_1$ (and all others coinciding with it) slightly to the right.
By strict monotonicity and by Lemma \ref{lem:trivi} for the new node system $\ee'$ the values of $m_i(\ee')$, $i>0$  will strictly decrease provided $m_i(\ee)>-\infty$,
i.e.,  $\max_{i>0}m_i(\ee')<\max_{i>0}m_i(\ee)=\mol(\ee)$.
By the upper semicontinuity of $m_0$ (see  Proposition \ref{prop:mjlscusc} (a))
we can achieve that $m_0(\ee')<\mol(\ee)$, too. Altogether we obtain $\mol(\ee')<\mol(\ee)$,
a contradiction.

\medskip
The case $j=n$ can be handled analogously,
because in this case $e_n>0$, and
we can move $e_{n}$ to the left.

\medskip
Next we consider the case when $0<j<n$.
By the choice of $j$ we have $e_j>0$
(because, if $e_j=0$, then
$I_0(\ee)=\ldots=I_{j-1}(\ee)=\{0\}$,
so
$m_0(\ee)=m_j(\ee)<\mol(\ee)$ and $j$ was chosen minimal).
If $e_{j+1}=1$,
then $\mol(\ee)>m_j(\ee)\geq m_{j+1}(\ee)=\cdots =m_n(\ee)$,
and we can move $e_j$  (and all nodes coinciding with it) slightly to the left such that for
the new node system $\ee'$ one has $\mol(\ee')<\mol(\ee)$.
This is again a contradiction.

\medskip
So it remains the case when $0<j<n$, and $0<e_j\leq e_{j+1}<1$.
In this case we move $e_j$ slightly to the left,
$e_{j+1}$ slightly to the right (and all the other nodes coinciding with them), in such a way that with the total weights $p:=\sum_{i ~:~e_i=e_j} \nu_i$ and $q:=\sum_{i~:e_i=e_{j+1}} \nu_i$ and with $\alpha:=e_j', a:=e_j, b:=e_{j+1}, \beta:=e_{j+1}'$ we will have $\kappa=1$ for $\kappa$ in \eqref{eq:mudef}. Then we arrive at a new node system $\ee'$ with $m_j(\ee')$ still below $\mol(\ee)$, if the move is small enough, in view of $m_j(\ee)<\mol(\ee)$ and $m_j$ being upper semicontinuous.

For the other indices $i\ne j$, however, for the respective intervals $I_i(\ee')\subset I_i(\ee)$, moreover, by Lemma \ref{lem:widening} (c) and (d) also the corresponding values of the pure sum of translates function $f(\ee',\cdot)$ decreased strictly, so  by Lemma \ref{lem:trivi} also $m_i(\ee')<m_i(\ee)$ whenever $m_i(\ee)>-\infty$, hence in particular $m_i(\ee')<\mol(\ee)$. This holding both for $i=j$ and also for all $i\ne j$, finally we obtain $\mol(\ee')<\mol(\ee)$, contradicting to minimality of $\mol(\ee)$.
So finally we see that $\ee$ must be an equioscillation point.
The assertion is thus proved for the case of strictly concave monotone kernel functions.	
	
\medskip
Let $K$ be a general non-singular monotone  \eqref{cond:monotone}  kernel function
and consider for $\eta>0$ the strictly concave and monotone kernel functions
$K^{(\eta)}(t)=K(t)+\eta\sqrt{|t|}$.
We also introduce $m_j^{(\eta)}$,
$\mol^{(\eta)}$,
and $M^{(\eta)}(\oSS)$, for the corresponding quantities in the case of the kernel function $K^{(\eta)}$.
Since for $\eta\downto 0$ we have $K^{(\eta)}\downto K$,
we have $m_j^{(\eta)}\downto m_j$ and $\mol^{(\eta)}\downto \mol$ pointwise,
and since $\mol:\oS\to \RR$ is continuous
(see Proposition \ref{lem:mbarcont}), $\mol^{(\eta)}\downto \mol$ even uniformly.
It follows that $M^{(\eta)}(\oSS)\downto M(\oSS)$.
Let $\ee^{(\eta)}\in \oS$
be such that
$\mol^{(\eta)}(\ee^{(\eta)})
=M^{(\eta)}(\oSS)$,
so $\ee^{(\eta)}$ is an equioscillation point
by the already established first part.
Letting $\eta\downto 0$ we find a sequence $\eta_k\downto 0$
and $\ee \in \oS$
such that $\ee^{(\eta_k)}\to \ee$
as $k\to \infty$.

We claim that $\mol(\ee)=M(\oSS)$, the inequality $M(\oSS)\leq \mol(\ee)$ being trivial. Let $a>M(\oSS)$. Then for all sufficiently large $k$ we have $a\geq M^{(\eta_k)}(\oSS)$, so we can conclude
\[
a\geq M^{(\eta_k)}(\oSS)=\mol^{(\eta_k)}(\ee^{(\eta_k)})\geq \mol(\ee^{(\eta_k)}).
\]
Letting $k\to\infty$ we conclude,
by the continuity of $\mol$ and by $\ee^{(\eta_k)}\to \ee$,
that
$a\geq \mol(\ee)$ and $M(\oSS)\geq \mol(\ee)$ follows.
The claim is proved.

\medskip\noindent
Next we claim that $\ee$ is an equioscillation point.
Suppose the contrary, i.e., that for some $j\in \{0,1,\dots,n\}$
we have $m_j(\ee)<\mol(\ee)$.
Then there is $k_0\in \NN$ such that $m_j^{(\eta_{k_0})}(\ee)<\mol(\ee)$.
Since $m^{(\eta_{k_0})}_j$ is upper semicontinuous,
see Proposition \ref{prop:mjlscusc},
there is $\delta>0$ such that for every $\xx\in \oS $
with $\|\xx-\ee\|<\delta$
one has
$m_j^{(\eta_{k_0})}(\xx)
<\mol(\ee)$, too.

There is $n_0\in \NN$ such that for every $k\geq n_0$
we have $\|\ee^{(\eta_{k})}-\ee\|<\delta$.
So for $k\geq\max\{k_0,n_0\}$ we can write (since $\mol(\ee)=M(\oSS)$)
\[
m_j^{(\eta_{k})}(\ee^{(\eta_k)})
\leq
m_j^{(\eta_{k_0})}(\ee^{(\eta_k)})
<
\mol(\ee)
\leq
\mol(\ee^{(\eta_k)})
\leq
\mol^{(\eta_k)}(\ee^{(\eta_k)}).
\]
This is a contradiction,
since $m_i^{(\eta_{k})}(\ee^{(\eta_k)})=\mol^{(\eta_k)}(\ee^{(\eta_k)})$ for each $i\in \{0,1,\dots,n\}$.
\end{proof}

\begin{prop} \label{prop:mainJusc}
Let $n\in\NN$, $\nu_1,\ldots,\nu_n>0$,
$K$ be a monotone \eqref{cond:monotone} kernel function and $J$ be an  upper semicontinuous $n$-field function.
Then there exists an equioscillation point  $\ee\in Y$ with
\[
\mol(\ee)=\mul(\ee)=m(\oSS)=M(\oSS).
\]
Moreover, strict majorization does not hold on $Y$, i.e., there are no $\xx, \yy \in Y$
with $m_j(\xx)>m_j(\yy)$ for every $j\in \{0,1,\ldots,n\}$.
In particular, the equioscillation value is unique.
\end{prop}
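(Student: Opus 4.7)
The plan is to assemble three already-established ingredients---Proposition \ref{prop:eqexistsmm}, Proposition \ref{prop:nonmajorize}, and Lemma \ref{lem:eqmS}---and to squeeze uniqueness of the equioscillation value out of them almost for free. There is no real combinatorial or analytic obstacle left at this point; the subtle work lies in the two preceding propositions.

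First, I would apply Proposition \ref{prop:eqexistsmm} to produce an equioscillation point $\ee \in Y$ satisfying $\mol(\ee) = M(\oSS)$. Because $\ee$ equioscillates, all the $m_j(\ee)$ coincide with a single value $\mu := \mol(\ee) = \mul(\ee)$. Finiteness of $\mu$ follows either from $\ee \in Y$ or simply from finiteness of $\mol$ on $\oS$ (Lemma \ref{lem:mbarcont}), and it is precisely this finiteness that makes $\ee$ eligible for the nonmajorization machinery.

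Second, Proposition \ref{prop:nonmajorize} provides the failure of strict majorization on $Y$. Combining this with the equioscillation point $\ee$ just produced, Lemma \ref{lem:eqmS} yields $m(\oSS) = \mu$. Together with the trivial inequality $M(\oSS) \leq m(\oSS)$ coming from the existence of an equioscillation point (Remark \ref{rem:nonmaj0}), one obtains the full chain
\[
\mul(\ee) = \mol(\ee) = \mu = M(\oSS) = m(\oSS).
\]

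Third, uniqueness of the equioscillation value is immediate: if $\ee' \in \oS$ is any equioscillation point with common value $\mu'$, then $\mu' = \mol(\ee') \geq M(\oSS) = \mu$ and $\mu' = \mul(\ee') \leq m(\oSS) = \mu$, so $\mu' = \mu$. The hardest step to state carefully, though not difficult, is the book-keeping guaranteeing that $\ee$ lies in $Y$ and that the common equioscillation value $\mu$ is finite, so that Lemma \ref{lem:eqmS} genuinely applies; everything else is bookkeeping on already-proven statements.
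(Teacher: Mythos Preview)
Your proposal is correct and follows essentially the same route as the paper's own proof: invoke Proposition \ref{prop:eqexistsmm} for an equioscillation point $\ee\in Y$ with $\mol(\ee)=M(\oSS)$, then Proposition \ref{prop:nonmajorize} for the failure of strict majorization, and finally Lemma \ref{lem:eqmS} to conclude $m(\oSS)=\mol(\ee)$. Your explicit derivation of the uniqueness of the equioscillation value is a small addition the paper leaves implicit, but otherwise the arguments match.
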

\begin{proof}
	We can assume that $K$ is non-singular, otherwise we already know the statement from Theorem \ref{thm:mainold}.
	
By Proposition \ref{prop:eqexistsmm} there is an equioscillation point $\ee\in \oS$ with $\mol(\ee)=M(\oSS)$. By Proposition \ref{prop:nonmajorize} we have no strict majorization on $Y$, so by Lemma \ref{lem:eqmS} $\mol(\ee)=m(\oSS)$ follows.
\end{proof}

\section{Not upper semicontinuous field functions}\label{sec:furtherminimax}

In the following denote the \emph{upper semicontinuous regularization} of a function $\phi: [0,1]\to \uR$ by $\phi^*$, i.e.
\[
\phi^*(t):=\lim_{r\to 0} \sup_{s\in (t-r,t+r)\cap [0,1]}\phi(s).
\]
Then $\phi^*$ is an upper semicontinuous function,
moreover, it is the least upper semicontinuous function above $\phi$, see, e.g., p.~62 in \cite{Ransford}.

\begin{lemma}\label{lem:mbarinv} Let $K_j:[-1,1]\to \uR$ be extended continuous functions and $J:[0,1]\to \uR$ be an arbitrary upper bounded function.  Consider the corresponding generalized sum of translates function $F$ from \eqref{eq:Fsumgen}. Then the supremum  $\mol(\xx):=\sup_{[0,1]} F(\xx,\cdot)$ remains unchanged if $J$ is substituted by its upper semicontinuous regularization $J^*$.
\end{lemma}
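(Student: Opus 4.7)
The plan is to prove the two inequalities $\mol(\xx) \le \mol^*(\xx)$ and $\mol^*(\xx) \le \mol(\xx)$ separately, where $\mol^*(\xx) := \sup_{[0,1]}[J^*(t)+f(\xx,t)]$ with $f(\xx,t):=\sum_{j=1}^n K_j(t-x_j)$. The first is immediate because $J\le J^*$ pointwise (by the very definition of upper semicontinuous regularization) implies $F(\xx,\cdot)\le J^*+f$ pointwise, whence the supremum inequality.

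For the reverse inequality, I would fix $t_0\in[0,1]$ and show $J^*(t_0)+f(\xx,t_0)\le \mol(\xx)$. Since $J$ is bounded above, so is $J^*$, hence $J^*(t_0)<+\infty$. If $J^*(t_0)=-\infty$ or $f(\xx,t_0)=-\infty$, then the left-hand side is $-\infty$ and the inequality holds trivially; so I may assume both $J^*(t_0)$ and $f(\xx,t_0)$ are finite real numbers. By the definition of $J^*$, I can pick a sequence $s_k\to t_0$ in $[0,1]$ with $J(s_k)\to J^*(t_0)$. Since each $K_j$ is extended continuous and $f(\xx,t_0)>-\infty$ forces every $K_j(t_0-x_j)$ to be finite, each $K_j(s_k-x_j)\to K_j(t_0-x_j)$ in $\RR$, so summing finitely many yields $f(\xx,s_k)\to f(\xx,t_0)$ in $\RR$. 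Consequently
\[
F(\xx,s_k)=J(s_k)+f(\xx,s_k)\longrightarrow J^*(t_0)+f(\xx,t_0).
\]
Since each $F(\xx,s_k)\le \mol(\xx)$, passing to the limit gives the desired bound. Taking the supremum over $t_0$ completes the argument.

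The only real delicacy is ensuring convergence of $f(\xx,s_k)$ to $f(\xx,t_0)$ on the set where $J^*(t_0)$ is finite; this is where the extended continuity of the $K_j$ (together with the finiteness of $f(\xx,t_0)$) is used to promote extended-valued convergence to convergence in $\RR$. Everything else is a straightforward manipulation of suprema and of the definition of $J^*$.
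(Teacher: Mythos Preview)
Your proof is correct. The paper's argument is more compressed: it observes that, because $f(\xx,\cdot)=\sum_j K_j(\cdot-x_j)$ is extended continuous, the function $J^*+f(\xx,\cdot)$ coincides with the upper semicontinuous regularization of $F(\xx,\cdot)=J+f(\xx,\cdot)$, and then invokes the general fact that a function and its upper semicontinuous regularization share the same supremum. Your argument unpacks this last fact directly via the approximating sequence $s_k\to t_0$ with $J(s_k)\to J^*(t_0)$, which has the advantage of being entirely self-contained and of making explicit where the finiteness of $f(\xx,t_0)$ is used to upgrade extended-valued convergence to real-valued convergence. The two routes are essentially the same idea viewed at different levels of abstraction.
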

\begin{proof} Obviously, replacing $J$ by $J^*$ furnishes $\mol^*(\xx):=\sup_{[0,1]}F^*(\xx,\cdot)$, where $F^*(\xx,\cdot)$ is the upper semicontinuous regularization of $F(\xx,\cdot)$. As the supremum of a function equals the supremum of its upper semicontinuous regularization, the statement is clear.
\end{proof}
Consider the upper semicontinuous regularization $J^*$ of $J$ and denote the corresponding sum of translate function and the corresponding quantities by $F^*$,  $M^*$ and $m^*$, respectively.

\begin{lem}\label{lem:mjinv} Let $K_j:[-1,1]\to \uR$ be extended continuous functions, singular in the sense that $K_j(0)=-\infty$, and let $J:[0,1]\to \uR$ be an arbitrary upper bounded function.  Consider the corresponding generalized sum of translates function $F$ from \eqref{eq:Fsumgen}. Then for each $j=0,1,\ldots,n$ the $j$th interval maximum function $m_j(\xx):=\sup_{[x_j,x_{j+1}]} F(\xx,\cdot)$ remains unchanged if $J$ is substituted by its upper semicontinuous regularization $J^*$. In particular, $\mv:\oS\to\uR$ remains unchanged under replacing $J$ by $J^*$.
\end{lem}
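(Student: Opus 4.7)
The plan is to prove both inequalities. The direction $m_j(\xx) \le m_j^*(\xx)$ is immediate from $J \le J^*$, which gives $F(\xx, \cdot) \le F^*(\xx, \cdot)$ pointwise, hence the desired inequality upon taking $\sup_{[x_j,x_{j+1}]}$. For the reverse, the crux is to recognize that $F^*(\xx, \cdot)$ coincides with the upper semicontinuous regularization of $F(\xx, \cdot)$ as a function of $t$; write it $F^\sharp(\xx, \cdot)$ for clarity. Once this is established, the statement $m_j^*(\xx) \le m_j(\xx)$ reduces to comparing suprema over $[x_j, x_{j+1}]$, which is largely a neighborhood argument.

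For the identification $F^*(\xx, \cdot) = F^\sharp(\xx, \cdot)$, observe that $f(\xx, \cdot) = \sum_i K_i(\cdot - x_i)$ is extended continuous as a sum of extended continuous functions, hence $F^*(\xx, \cdot) = J^*(\cdot) + f(\xx, \cdot)$ is usc and dominates $F(\xx, \cdot)$, giving $F^*(\xx, \cdot) \ge F^\sharp(\xx, \cdot)$. For the reverse, fix $t \in [0, 1]$. If $f(\xx, t) = -\infty$ then both sides equal $-\infty$. Otherwise $f(\xx, t)$ is finite, in which case the extended continuity of $f(\xx, \cdot)$ upgrades to ordinary continuity at $t$, and
\[
\limsup_{s \to t} F(\xx, s) = \limsup_{s \to t} J(s) + f(\xx, t) = J^*(t) + f(\xx, t) = F^*(\xx, t),
\]
completing the identification.

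Finally I check $F^*(\xx, t) \le m_j(\xx)$ for every $t \in [x_j, x_{j+1}]$. When $t \in (x_j, x_{j+1})$, a small open neighborhood of $t$ sits inside $[x_j, x_{j+1}]$, so the limsup defining $F^\sharp(\xx, t)$ is bounded by $\sup_{[x_j, x_{j+1}]} F(\xx, \cdot) = m_j(\xx)$. The endpoints are where the singularity enters decisively: if $t = x_j$ or $t = x_{j+1}$ agrees with an actual node $x_i$ for some $1 \le i \le n$, then the $i$-th summand of $f(\xx, t)$ equals $K_i(0) = -\infty$, forcing $f(\xx, t) = -\infty$ and hence $F^*(\xx, t) = -\infty \le m_j(\xx)$. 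The only endpoints not of this form are $t = 0$ (when $j = 0$) and $t = 1$ (when $j = n$), where approximation within $[0,1]$ is automatically one-sided into $[x_0, x_1]$ resp.\ $[x_n, x_{n+1}]$, so the interior argument applies (and if that subinterval degenerates to a single node, we are back in the nodal case). Taking the supremum over $t$ yields $m_j^*(\xx) \le m_j(\xx)$, and the ``in particular'' statement about $\mv$ is just this equality combined across $j = 0, 1, \ldots, n$.

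The only genuinely delicate point in the whole argument is the endpoint analysis, and it is precisely the singularity hypothesis $K_i(0) = -\infty$ that rules out $F^*(\xx, x_j)$ absorbing large values from the neighboring interval $I_{j-1}(\xx)$; without it, the identity $m_j = m_j^*$ could plausibly fail at the interval endpoints.
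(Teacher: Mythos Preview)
Your proof is correct and follows essentially the same approach as the paper. Both arguments hinge on the identification of $F^*(\xx,\cdot)$ with the upper semicontinuous regularization of $F(\xx,\cdot)$ and on the observation that the singularity $K_i(0)=-\infty$ forces $F(\xx,\cdot)=F^*(\xx,\cdot)=-\infty$ at the nodal endpoints of $I_j(\xx)$; the paper phrases this as reducing the supremum to the relative interior $\rint I_j(\xx)$ (where local and global regularizations coincide because the set is open in $[0,1]$), while you carry out the same endpoint/interior split explicitly, but the substance is the same.
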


Observe that without singularity this statement does not remain valid. Indeed, if $K\equiv 0$ and $J=\log {\bf 1}_{[0,1/2)}$, then $J^*=\log {\bf 1}_{[0,1/2]}$, and for $n=1$ and $\xx=(1/2)$ the quantities $m_1(1/2)=-\infty$ and $m_1^*(1/2)=0$ are different.

\begin{proof}
	Consider the upper semicontinuous regularization $J^*$ of $J$ and denote the corresponding sum of translate function and the corresponding quantities by $F^*$,  $m_j^*$, respectively.	Let some $\xx\in \oS$ and an index $j\in \{0,1,\ldots,n\}$ be fixed arbitrarily. As any relative boundary point of $I_j(\xx)$ must be some node $x_i$ from the node system $\xx$, we already have $f(\xx,x_i)=-\infty$, and we get both for $J$ and for $J^*$ that $F(\xx,x_i)=-\infty$ and $F^*(\xx,x_i)=-\infty$. It follows that we can neglect boundary points, and $m_j(\xx)=\sup_{t \in \rint I_j(\xx)} F(\xx,t)$ and $m^*_j(\xx)=\sup_{t \in \rint I_j(\xx)} F^*(\xx,t)$ (with the convention $\sup\emptyset=-\infty$).

However, $F^*(\xx,t)|_{\rint I_j(\xx)}$, originally defined by $F^*(\xx,t):=f(\xx,t)+J^*(t)$, where $J^*$ is the (global) upper semicontinuous regularization of $J$, becomes the upper semicontinuous regularization of $F(\xx,t)$ on $\rint I_j(\xx)$. This is not necessarily true on sets which are not relatively open, but hold true on all relatively open subsets $A\subset [0,1]$. Indeed, for any point $t\in A$ there is a neighborhood $(t-\de,t+\de)\cap [0,1]\subset A$, hence $\limsup_{s \to t} J(s)=\limsup_{s\to t, s\in A} J(s)$, and the ``global'' definition of upper semicontinuous regularization coincides the restricted to $A$ definition. Thus we see that $m_j(\xx)$ is the supremum of a function $F(\xx,\cdot)|_{\rint I_j(\xx)}$ on $\rint I_j(\xx)$, and $m_j^*(\xx)$ is the supremum of its upper semicontinuous regularization on the same domain, therefore they match. The assertion is proved.
\end{proof}

In view of the above considerations with replacing the field function $J$ by its upper semicontinuous regularization $J^*$, we can drop the condition of upper semicontinuity from Theorem \ref{thm:mainold}.

\begin{prop}\label{prop:new32} Let $K$ be a singular \eqref{cond:infty} and monotone \eqref{cond:monotone} kernel function, and let $J$ be any field function.
Then $M(\oSS)=m(\oSS)$ and there exists some node system $\ww\in \oS$, with the three properties that it is an equioscillation point, it attains the simplex maximin and also it attains the simplex minimax: $\mul(\ww)=m(\oSS)=M(\oSS)=\mol(\ww)$.

Moreover, strict majorization $m_i(\xx)>m_i(\yy)$ for every $i=0,1,\ldots,n$ cannot hold for any $\xx, \yy \in Y$.
\end{prop}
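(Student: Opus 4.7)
The plan is to reduce the statement to Theorem \ref{thm:mainold} by passing to the upper semicontinuous regularization $J^*$ of $J$, and then invoking the singularity of $K$ via Lemma \ref{lem:mjinv} to transfer conclusions back to the original field function $J$. The key point is that for singular kernels, all relevant quantities $m_j$, $\mol$, $\mul$, $M(\oSS)$, $m(\oSS)$ and $Y$ are invariant under this regularization.

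First, I would check that $J^*$ is itself an $n$-field function: since $J^*$ is the least upper semicontinuous majorant of $J$, we have $J\leq J^*$ pointwise, so $J^*$ is bounded above by $\sup J<\infty$, and its finiteness domain contains $X^c_J$, hence has (weighted) cardinality strictly exceeding $n$. Moreover $J^*$ is by construction upper semicontinuous. Thus Theorem \ref{thm:mainold} applies to the sum of translates function $F^*$ built with kernel $K$ and field $J^*$, yielding a node system $\ww\in\oS$ (in fact in the corresponding regularity set) which is an equioscillation point for $F^*$, attains both the simplex minimax and the simplex maximin, and has the property that strict majorization fails between any two points of the regularity set associated with $J^*$.

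Next, since $K$ is singular, Lemma \ref{lem:mjinv} (applied with $K_j=\nu_j K$, for which $K_j(0)=-\infty$) gives $m_j(\xx)=m_j^*(\xx)$ for every $\xx\in\oS$ and every $j\in\{0,1,\ldots,n\}$. Consequently $\mol=\mol^*$, $\mul=\mul^*$, $M(\oSS)=M^*(\oSS)$, $m(\oSS)=m^*(\oSS)$, the regularity sets $Y$ and $Y^*$ coincide, and a point is an equioscillation point for $F$ if and only if it is one for $F^*$. Therefore the conclusions of Theorem \ref{thm:mainold} for $J^*$ transfer verbatim to $J$: the same $\ww$ is an equioscillation point for $F$, achieves $\mul(\ww)=m(\oSS)=M(\oSS)=\mol(\ww)$, and strict majorization cannot hold between any $\xx,\yy\in Y$.

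The only conceptual subtlety lies in Lemma \ref{lem:mjinv}, which has already been established; it is precisely the singularity $K(0)=-\infty$ that forces $F(\xx,\cdot)=-\infty$ at every nodal (hence boundary) point, so the suprema $m_j(\xx)$ are effectively taken over the relatively open intervals $\rint I_j(\xx)$, on which $F^*(\xx,\cdot)$ is the upper semicontinuous regularization of $F(\xx,\cdot)$, and suprema are preserved under regularization. Thus no real obstacle remains beyond a careful bookkeeping of this equivalence.
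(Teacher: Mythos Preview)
Your proposal is correct and follows essentially the same approach as the paper: the paper's proof simply states that the assertion follows from Lemma \ref{lem:mjinv} (applied with $K_j:=\nu_j K$) and Theorem \ref{thm:mainold}. You have supplied the details the paper leaves implicit---that $J^*$ is again an $n$-field function, and that the identity $m_j=m_j^*$ forces $Y=Y^*$, $\mol=\mol^*$, $\mul=\mul^*$ and the coincidence of equioscillation points---but the strategy is identical.
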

\begin{proof} The assertion follows from Lemma \ref{lem:mjinv} (applied with $K_j:=\nu_jK$) and Theorem \ref{thm:mainold}.
\end{proof}

\begin{lemma}\label{lem:mmcsill} Let $K$ be a monotone \eqref{cond:monotone} kernel function, $\nu_i>0$ $(i=1,\ldots,n)$. Let $J$ be an arbitrary $n$-field function and let $J^*$ be its upper semicontinuous regularization. Denote by $m^*(\oSS)$ the corresponding maximin quantity  and $m_j^*$ the corresponding interval maximum functions for the case of the function $J^*$ as a field (note that, indeed, $J^*$ is an $n$-field function). Then $m(\oSS)= m^*(\oSS)$.
\end{lemma}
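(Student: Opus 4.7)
The plan is to prove the non-trivial direction $m(\oSS)\ge m^*(\oSS)$; the reverse inequality is immediate from $J\le J^*$, which forces $m_j\le m_j^*$ pointwise. If $K$ is singular, Lemma \ref{lem:mjinv} applied with $K_j:=\nu_jK$ gives $m_j=m_j^*$ pointwise on $\oS$ and the equality follows at once. So I henceforth assume $K$ non-singular, hence (by monotonicity) finite valued, which makes $f$ uniformly continuous on $\oS\times[0,1]$.

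The starting point will be Proposition \ref{prop:mainJusc} applied to the upper semicontinuous $n$-field function $J^*$: it yields an equioscillation point $\ee\in Y^*$ with $m_j^*(\ee)=m^*(\oSS)$ for every $j\in\{0,1,\dots,n\}$. Fix $\ve>0$. Since $F^*(\ee,\cdot)=J^*+f(\ee,\cdot)$ is upper semicontinuous on each compact interval $I_j(\ee)$, I pick points $t_j\in I_j(\ee)$ with $F^*(\ee,t_j)=m^*(\oSS)$. The defining formula $J^*(t)=\lim_{r\to 0}\sup_{(t-r,t+r)\cap[0,1]}J$ yields a clean dichotomy: either $J^*(t_j)=J(t_j)$, whence $F(\ee,t_j)=F^*(\ee,t_j)=m^*(\oSS)$ and no adjustment is needed for this index; or $J^*(t_j)>J(t_j)$, in which case infinitely many points $s\neq t_j$ exist in every neighborhood of $t_j$ with $J(s)>J^*(t_j)-\ve$.

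The crucial step is to build a node system $\yy\in\oS$ close to $\ee$ with a selection $s_j\in\rint I_j(\yy)$ satisfying $J(s_j)>J^*(t_j)-\ve$ for each index needing adjustment. Because the $I_j(\ee)$ form an abutting closed cover, the $t_j$ can coincide only in consecutive blocks $t_j=\cdots=t_{j'}=:\tau$ forced to equal a shared node $\tau=e_{j+1}=\cdots=e_{j'}$ of $\ee$. I treat each block locally: for a block of size $q$ at a node $\tau$ in the nontrivial subcase $J^*(\tau)>J(\tau)$, I select $q$ distinct approximants $s^{(1)}<\cdots<s^{(q)}$ in a small neighborhood of $\tau$ with $J(s^{(i)})>J^*(\tau)-\ve$, and place the perturbed nodes at the midpoints $(s^{(i)}+s^{(i+1)})/2$. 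This keeps nodes ordered, puts each $s^{(i)}$ in the relative interior of its own new interval, and makes $\|\yy-\ee\|$ as small as desired; distinct blocks are spatially separated, so the local perturbations do not interact. Blocks in the trivial subcase $J^*(\tau)=J(\tau)$ are left untouched. Uniform continuity of $f$ then yields, in the nontrivial subcase,
\[
F(\yy,s_j)=J(s_j)+f(\yy,s_j)>(J^*(t_j)-\ve)+(f(\ee,t_j)-\ve)=F^*(\ee,t_j)-2\ve=m^*(\oSS)-2\ve,
\]
while in the trivial subcase $F(\ee,t_j)=m^*(\oSS)$ holds directly. Hence $\mul(\yy)\ge m^*(\oSS)-2\ve$, so $m(\oSS)\ge m^*(\oSS)-2\ve$, and letting $\ve\downto 0$ finishes the argument.

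The main obstacle I anticipate is the combinatorial bookkeeping of the perturbation in the cluster case: picking enough distinct $J$-large points near a shared boundary node, distributing the perturbed nodes among them, and simultaneously controlling node ordering, the magnitude of $\|\yy-\ee\|$, the distances $|s_j-t_j|$, and the separation of the new nodes from the chosen $s_j$. The dichotomy $J^*(t_j)=J(t_j)$ versus $J^*(t_j)>J(t_j)$ is precisely what eliminates the potential ``spike'' obstruction (a $J$-value isolated at $t_j$ with no nearby companions), while the spatial separation of distinct blocks ensures that the local adjustments do not interfere globally.
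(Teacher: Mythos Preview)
Your strategy is correct and genuinely different from the paper's. The paper argues by contradiction: assuming $m(\oSS)<m^*(\oSS)$ it picks any $\xx$ with $\mul^*(\xx)>m(\oSS)$ minimizing the number of ``bad'' indices $\J(\xx)=\{j:m_j(\xx)\le m(\oSS)\}$, then moves a single cluster of nodes (using the same boundary-point observation you isolate, namely that $m_j^*(\xx)>m_j(\xx)$ forces $J^*(x_j)>J(x_j)$ or $J^*(x_{j+1})>J(x_{j+1})$) to strictly decrease $\#\J(\xx)$. Continuity of $m_i$ on the slice $A_{i,j,\xx}$ (Lemma~\ref{lem:mjcontrest2}) controls the remaining indices. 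So the paper iterates one index at a time, whereas you try a single global perturbation of the equioscillation point $\ee$. Your route is more direct and conceptually pleasant; the paper's route trades the global combinatorics for a short induction.

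There is, however, a genuine loose end in your midpoint construction. For a block of size $q$ you move $q-1$ nodes, which is exactly right when $q\ge 2$. But a \emph{singleton} $t_j$ at a node (say $t_j=e_{j+1}$, $t_{j+1}\neq e_{j+1}$) in the nontrivial case $J^*(e_{j+1})>J(e_{j+1})$ moves zero nodes, and then your approximant $s^{(1)}$ may land on the wrong side of $e_{j+1}$ (this happens precisely when $\limsup_{s\uparrow e_{j+1}}J(s)<J^*(e_{j+1})$), so $s^{(1)}\notin I_j(\yy)$. This case does occur and cannot always be eliminated by re-choosing the $t_j$'s: with $M_0=\{e_1\}$, $M_1\subset\{e_1,e_2\}$, $M_2=\{e_2\}$ one is forced into a singleton at one of the two nodes, and nothing prevents both nodes from having one-sided approximants pointing inward. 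The repair is easy --- for such a singleton also shift the node $e_{j+1}$ past the approximant, noting that the neighbouring selection $t_{j+1}$ (or $s_{j+1}$) lies strictly beyond $e_{j+1}$ and can be kept inside the shrunken $I_{j+1}(\yy)$ --- but it interacts with adjacent blocks (and with the possibility $\ee\in\partial S$, which Proposition~\ref{prop:eqexistsmm} does not exclude) and must be argued, not just asserted. The paper's one-step-at-a-time scheme is precisely what sidesteps this bookkeeping.
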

\begin{proof}
We may suppose that $K$ is non-singular, hence in view of  \eqref{cond:monotone} also finite valued, otherwise the statement is a direct consequence of Lemma \ref{lem:mjinv}. The inequality $m(\oSS)\le m^*(\oSS)$ is obvious, so we are to prove the converse inequality $m(\oSS)\ge m^*(\oSS)$.

Suppose for a contradiction that $m(\oSS)<m^*(\oSS)$, so the set $A:=\{\xx\in\oS:\mul^*(\xx)>m(\oSS)\}$ is non-empty.

For $\yy\in \oS$ let
\[
\I(\yy):=\{ i~:~ m_i(\yy)>m(\oSS)\} \qquad \text{and} \qquad \J(\yy):=\{j~:~ m_j(\yy)\le m(\oSS) \}.
\]
Note that $\J(\yy)$ is not empty, since $\mul(\yy)\leq m(\oSS)$.

Take now  an $\xx\in A$ such that the cardinality $\card\J (\xx)$ is minimal. Let $j\in \J(\xx)$. We will modify $\xx$ to $\xx'$ such that $m_j(\xx')>m(\oSS)$ while $\I(\xx)\subseteq \I(\xx')$. These yield  $\card\J(\xx')<\card\J(\xx)$, a contradiction to the minimality of $\card \J(\xx)$, proving the statement.

We have $m^*_j(\xx)\ge \mul^*(\xx) >m(\oSS)\geq m_j(\xx)$. Since a function and its upper semicontinuous regularization have the same supremum on each open set, we have
\[
\sup_{t\in\rint[x_j,x_{j+1}]}F^*(\xx,t) = \sup_{t\in \rint[x_j,x_{j+1}]}F(\xx,t).
\]

Therefore, $m_j^*(\xx)>m_j(\xx)$ is only possible if $F^*(\xx,x_j)=m^*_j(\xx)>m_j(\xx)$ or $F^*(\xx,x_{j+1})=m^*_j(\xx)>m_j(\xx)$; by symmetry, let us assume the first case of these. 
So in particular we must have $\limsup_{t\upto x_j}J(t)=J^*(x_j)> J(x_j)$.

So there is $\delta>0$ such that for every $\eta$ with $0<\eta<\delta$
\begin{equation}\label{eq:JJstar}
\sup_{s\in (x_j-\eta,x_j)}J(s)\geq J^*(x_j) > J(x_j) .
\end{equation}
Let $\veps>0$ be such that $ m^*_j(\xx)-2\veps>m(\oSS)$ and
\[
m_i(\xx)-\veps>m(\oSS)\quad\text{for each $i\in \I(\xx)$}.
\]
 By the uniform continuity of $f$, to the given $\ve>0$ we can take a $\delta_0\in (0,\delta)$ such that \eqref{puresumfuncontinuity} holds.

Let $\calX_j:=\{k\in \{1,\dots,n\}: x_k=x_j\}$.

Take $i\in \{0,1,\dots,n\}$ with $x_j\not\in I_i(\xx)$, if there is any. 
Consider the set
\[
A_{i,j,\xx}:=\{\zz\in \oS: z_k=z_j\not\in I_i(\xx)\text{ for each $k\in \calX_j$ and $z_k=x_k$ for each $k\not\in \calX_j$}\}.
\]
By Lemma \ref{lem:mjcontrest2} the function $m_i$ is continuous on the set $A_{i,j,\xx}$ and, of course,  $\xx\in A_{i,j,\xx}$.
So we can take $\al_i>0$, also with $\al_i<\min ( |x_j-x_i|, |x_j-x_{i+1}|)$, such that for $\zz\in A_{i,j,\xx}$ satisfying $\|\zz-\xx\|<\al_i$ we have
\[
m_i(\zz)\geq m_i(\xx)-\veps.
\]

Next we define a perturbation of $\xx$. Set $\al:=\min( \de_0, \min \{ \al_i ~:~ x_j\not\in I_i(\xx) \})$ and pick $x_j'\in (x_j-\al,x_j)$ such that
\[
J(x_j')\geq J^*(x_j)-\veps,
\]
which is possible in view of \eqref{eq:JJstar}. Now, we move all the nodes that coincide with $x_j$  to $x_j'$ and leave all other nodes unchanged, thus obtaining the new node system $\xx'$. Note that $\xx'$ is indeed a node system, i.e. the ordering of the nodes remain in the natural order, in view of the choice of the $\al_i$, and $\xx'\in A_{i,j,\xx}$ for every $i$ with $x_j\not \in I_i(\xx)$.

\medskip Now, let $i\in \{0,1,\dots,n\}$ be arbitrary.
If $x_j\not\in I_i(\xx)$, then $\xx'\in A_{i,j,\xx}$ and $\|\xx'-\xx\|<\al\le \al_i$, so by the above
\[
m_i(\xx')\geq m_i(\xx)-\veps.
\]
Thus for such $i$, we have $m_i(\xx')>m(\oSS)$ (i.e., $i\in \I(\xx')$) if $m_i(\xx)>m(\oSS)$ (i.e., if $i\in \I(\xx)$).

On the other hand, if $x_j\in I_i(\xx)$, then either $x_j=x_i$ or $x_j=x_{i+1}$, hence also $x_j'=x_i'$ or $x_j'=x_{i+1}'$, and then $x_j'\in I_i(\xx')$, too. Therefore,
\begin{align*}
m_i(\xx')&\geq F(\xx',x_j')=f(\xx',x_j')+J(x_j')\geq f(\xx,x_j)-\veps+J^*(x_j)-\veps\\
&=F^*(\xx,x_j)-2\veps=m^*_j(\xx)-2\veps>m(\oSS).
\end{align*}
In particular, choosing $i=j$ we see $m_j(\xx')>m(\oSS)$---i.e., $j\in \I(\xx')$. Further, from the argumentations in the cases $x_j\not\in I_i(\xx)$ and $x_j \in I_i(\xx)$ altogether also $ \I(\xx)\subseteq \I(\xx')$ follows.
The proof is complete.
\end{proof}

\bigskip

\begin{proof}[Proof of  Theorem \ref{thm:main}]
Consider the upper semicontinuous regularization $J^*$ of $J$ and denote the corresponding sum of translate function and the corresponding quantities by $F^*$,  $M^*$ and $m^*$, respectively. Then $m^*(\oSS)=M^*(\oSS)$ by Proposition \ref{prop:mainJusc} and $M^*(\oSS)=M(\oSS)$ by Lemma \ref{lem:mbarinv}. Altogether, we already have
\begin{equation}\label{eq:mmcsill}
m(\oSS)\leq m^*(\oSS)=M^*(\oSS)=M(\oSS).
\end{equation}
Furthermore, the equality $M(\oSS)=m(\oSS)$ follows from \eqref{eq:mmcsill} and Lemma \ref{lem:mmcsill}. Further, if $\ee$ is any equioscillation point (there may not be any in general!), then $M(\oSS)\le \mol(\ee)=\mul(\ee) \le m(\oSS)$ so one has equality everywhere. Also, there exists a point $\ww\in \oS$ with $\mol(\ww)=M(\oSS)$ in view of the continuity of $\mol$ on $\oS$, furnished by Lemma \ref{lem:mbarcont}. Moreover, if $J$ is upper semicontinuous, then according to Proposition \ref{prop:eqexistsmm} there exists an equioscillation point.

\bigskip
Regarding majorization, assume for a contradiction that strict majorization holds for some node systems $\xx,\yy\in Y$, that is, $m_j(\yy) > m_j(\xx)> -\infty$ for all $j=0,1,\ldots,n$. Take $\ve>0$ with
\[
\ve <\min\{m_j(\yy)-m_j(\xx): \  j=0,1,\ldots,n \}.
\]
Further, for all $j=0,1,\ldots,n$, there exists $u_j\in I_j(\yy)$ such that $F(\yy,u_j)> m_j(\yy)-\ve$ and $v_j\in I_j(\xx)$ such that $F(\xx,v_j)>-\infty$. Let us consider the sets $U:=\{u_j~:~ j=0,1,\ldots,n\}\subset [0,1]$ and $V:=\{v_j~:~ j=0,1,\ldots,n\}\subset [0,1]$. If $\# (U\cup V)= n+1$, then we are fine, but there can occur some coincidences among the $u_j$ and $v_k$, which can cause $\# (U\cup V)$ being smaller than $n+1$. To avoid arising complications, we can  take any subset $W\subset X^c$ with the weighted cardinality (with weights $1/2$ at endpoints $0, 1$) exceeding $n$. This is the exact condition for $J$ being an $n$-field function, thus such a set $W$ exists (even if it may not be disjoint from $U\cup V$). In any case, consider now the union $A:=U\cup V\cup W$.

Now we introduce the new upper bounded function
\[
\widetilde{J}(t):=\begin{cases}
J(t), & \text{ if } t\in A, \\
-\infty, &\text{ otherwise.}
\end{cases}
\]
Observe that $\widetilde{J}$ is upper semicontinuous, and it is an $n$-field function, too, as it clearly satisfies the finiteness condition for that category. Keeping the kernel $K$ and the coefficients  $\nu_i,~i=1,\ldots,n$, but taking $\widetilde{J}$ in place of $J$ we get the respective sum of translates function $\widetilde{F}$ and interval maxima $\widetilde{m}_j$, respectively. It is clear that $\widetilde{J}\le J$, hence $\widetilde{F}\le F$ and $\widetilde{m}_j\le m_j$, too. By the choice of $v_0,v_1,\dots, v_n$ we have $\widetilde m_j(\xx)>-\infty$ for every $j\in\{0,1,\dots,n\}$, i.e., $\xx$ belongs to the regularity set $\widetilde Y$ for the modified situation.

Finally,
\[
m_j(\yy)-\ve < F(\yy,u_j) = \widetilde{F}(\yy,u_j) \le \widetilde{m}_j(\yy),
\]
therefore
\[
\widetilde{m}_j(\xx) \le m_j(\xx) < m_j(\yy)-\ve \le \widetilde{m}_j(\yy),
\]
which shows that we have strict majorization with the upper semicontinuous $n$-field function $\widetilde{J}$, contradicting Proposition \ref{prop:nonmajorize}.
Theorem \ref{thm:main} is proved.
\end{proof}

\section{Concluding remarks and open problems}\label{sec:conclusion}

With the present study we have arrived at a considerably complete description
 of Fenton type minimax theorems for the interval.
Therefore, it is in order to comment on the developments thus far
and the many directions where further work may be interesting.

Our above Theorem \ref{thm:main} formulates a Fenton type result under essentially minimal assumptions,
once one accepts the concavity assumption on $K$.
Neither singularity, nor the cusp condition \eqref{cond:inftyprime} of Fenton is assumed here,
and neither strict concavity (or monotonicity), nor any smoothness assumptions or
conditions involving derivatives are used.
Furthermore, the field function can be almost arbitrary, satisfying only the necessary requirement
so that $F(\xx,\cdot)$ becomes non-constant $-\infty$ for every $\xx$.
Beside convexity of $K$,
 the only extra assumption is its  monotonicity, which, on the other hand,
was used many times throughout the argumentations.
It was already shown in \cite{Minimax} that the monotonicity condition cannot be just dropped:
The assertions badly fail to hold without it.
However, the role of monotonicity is still a bit mysterious,
because in \cite{TLMS2018} we did not require it for the periodic (i.e., torus) case,
while in \cite{Homeo} concerning the case of singular kernels on the interval,
we could prove a homeomorphism theorem under a weaker assumption
\[
K'(t)-K'(t-1)\geq c> 0 \quad \text{for a.e.{} $t\in[0,1]$},
\]
called ``periodized $c$-monotonicity''.
At the the same time in the periodic case (which would correspond to $c=0$)
also some singularity-type assumptions were needed for the field $J$.

\medskip

For not necessarily singular kernels the argumentation had to follow
a different route than the one for singular kernels. Indeed, a crucial difference
is that in case of singular kernels a surprisingly general homeomorphism theorem was found.
This result, Theorem 2.1 of \cite{Homeo}, however, must necessarily fail in case the kernels are non-singular, as is discussed in Example 8.2 of the mentioned latter paper.

The yields of the homeomorphism theorem, so useful in case of singular kernels, are thus irrelevant
for the minimax results here in the case of non-singular kernels.
However, it would be interesting to clarify the following matters
(this has been done for singular $K$ in \cite{Homeo}).
Do we still have that, under some suitable assumptions, the sum of translate function,
and even the functions $m_j$ and the maximum vector $\mv$ are locally
Lipschitz functions (on an appropriate domain)?
Note that in full generality the $m_j$ are not even continuous, as is seen from the above mentioned Example 8.2 of \cite{Homeo}, so any affirmative answer could be formulated on some appropriate domain and under some suitable assumptions at best. Further, do we have local Lipschitz continuity for the functions
$\mol$ and $\mul$, too? If yes, is it still true that the a.e.{} existing
derivative provides a non-singular Clarke derivative (generalized Jacobian)
for the difference function $\Phi:=(m_1-m_0,\ldots,m_{n+1}-m_n)$,
as happens for singular kernels, see Section 6 in \cite{Homeo}?
If yes, then $\Phi$ is a locally invertible locally Lipschitz mapping.
Is it  a global homeomorphism between $Y$ and its image $\Phi(Y)\subsetneqq \RR^n$?
At first glance, there is quite a difficulty here. Indeed, in the singular case
we could apply the well-known topology result usually attributed to Hadamard and, according to our search, recorded first by Eilenberg \cite{Eilenberg1935} in the thirties (and recurred several times since then, see the references in \cite{Homeo}), which says that if a mapping is a local homeomorphism and
 is unbounded as we approach the boundary, 
then it is a global homeomorphism, too. However, in the case $K$ is non-singular, even if local invertibility may be proven, the properness of the difference mapping $\Phi$ is not at all clear.

\medskip

As said, for non-singular kernels there is no homeomorphism theorem,
a key ingredient to prove the failure of majorization $m_j(\xx)\leq m_j(\yy)$
for every $j\in \{0,1,\dots,n\}$ for $\xx,\yy\in Y$, called \emph{intertwining};
this  is Theorem 4.1 of \cite{Minimax}.
However, on a completely different route---by elementary but involved arguments---in \cite{Ural}
we could arrive at such intertwining results even for non-singular
(but strictly concave and monotone) kernels, unfortunately only if $n\in \{1,2,3\}$.
In \cite{Ural} we  thus also formulated the conjecture that
the same phenomenon of intertwining occurs for larger $n$, too---a somewhat mysterious challenge which we could not progress with any further.
A possible approach is to seek the ``next best thing'' and replace the global homeomorphism by global injectivity. We have to admit that, although we have \emph{local} injectivity,  global injectivity of $\Phi$, appears to be totally out of grasp.
Nevertheless, to us it seems plausible that in cases when global injectivity of $\Phi$ remains true---and maybe other favorable circumstances, such as continuity of the functions $m_j$, hold---then also the intertwining result extends.

\medskip
The weaker property, the exclusion of \emph{strict} majorization,
is completely settled in the present paper, as a part of the main result Theorem \ref{thm:main}.
This also indicates the plausibility of the conjecture put forward in \cite{Ural}.
Given that we have the statement for small $n$, an inductive argument would seem natural,
but as the number of cases in the combinatorics of the possible inequalities
between the nodes $x_i, y_i$ grow super-exponentially, the case when $n>3$ appeared intractable.
Of course, there can be also some other approach, proving the statement for sufficiently nice,
say continuous fields, and then one could try to argue for general fields as we did in the proof of
Theorem \ref{thm:main} at the end of Section \ref{sec:furtherminimax}.

\medskip

In this paper we looked for the possibly least restrictive conditions
under which the \emph{original} result of Fenton may be extended.
We  thus have dealt with intervals, normalized to $[0,1]$ and $[-1,1]$.
However, other setups are also important. Analogous questions for the
torus $\TT:=\RR/\ZZ$ (periodic/unit circle) were already brought up
by Ambrus, Ball, Erd\'elyi \cite{AmbrusBallErdelyi} in connection with
the so-called strong polarization constant problem---we have accounted for
the development in this direction shortly in the Introduction and in more detail in \cite{Minimax}.
Even if the condition of monotonicity does not play a role in the torus setup, the corresponding
Fenton theory still stays much less developed in this case. In our work \cite{TLMS2018} we considered the situation when also $J$ (called $K_0$ there) is concave, but later developments in the interval case, relaxing considerably the assumptions on $J$, were not followed on $\TT$. Therefore, the direction of relaxing conditions on $J$ in the periodic case remains the subject of future investigations.

\medskip

An interesting observation, however trivial, is that an essential difference
between the interval and torus setup is that for the interval certain parts of
the defining domain of the kernel functions $K$ are translated in, and other parts
are translated out from the space (i.e., $[0,1]$) where we consider the maxima and the minimax questions.
In the torus setup all the translates live on the whole $\TT$,
 where also the maxima etc. are considered.
That difference seems to classify two cases of reasonable
setups---our feeling is that this may be connected with the necessity
or dispensability of extra monotonicity conditions.
Note that interpreting the torus setup by taking $K$ periodic $\mod 1$,
it is clear that periodicity and the monotonicity condition \eqref{cond:monotone}
can both hold only in the trivial, i.e. $K\equiv\text{constant}$ cases.

\medskip

As for other meaningful setups, the cases of the real line and the positive halfline must be mentioned.
These are important because of their numerous applications; for example, much of weighted
approximation theory and weighted potential theory were developed right for them.
(In fact, Fenton's original application, in the theory of entire functions,
would also fit more naturally to the situation of the infinite halfline.)
In both cases, the kernel  $K$ is  defined on $\RR$ being concave on  $(-\infty,0)$ and $(0,\infty)$.
The differences are in the definition of $J$, and in the possible locations of nodes in $\RR$
or in $[0,\infty)$, respectively. The translating in -- translating out aspect makes the halfline similar to the interval case, and $\RR$ similar to the torus case.

Also, for infinite intervals some kind of admissibility condition is required for $J$,
since otherwise the suprema on the unbounded intervals may be infinite. Such admissibility conditions are also present in weighted logarithmic potential theory on the real line resp.{} halfline. At this point we need to have a closer look at our normalization conditions. In logarithmic potential theory very often the total weight of translates---the total measure with which we consider potentials---is normalized to 1. If so, also the weight function $w$, or its logarithm $J:=\log w$ can be considered with power (weight) 1. If on the other hand we consider degree $n$ polynomials without normalizing them, then the total measure of translates becomes $n$ and then also $w^n$ becomes the natural counterpart. This motivates our normalization with considering the constant $\nu:=\sum_i \nu_i$, and assume, as a natural admissibility condition, that we have
\begin{equation}\label{eq:admissible}
\lim_{|t|\to \infty} \nu K(t)+J(t)=-\infty.
\end{equation}
In  the case of the halfline $[0,\infty)$ this should be modified by writing ``$t\to +\infty$''  only.

This admissibility condition should be compared to the ones
generally used in logarithmic potential theory of weighted approximation,
for example to (iii) in Definition I.1.1 for an admissible weight $w:\Sigma\to [0,\infty)$,
$\Sigma\subset \CC$ in the book \cite{Saff-Totik} (the relation to our framework is $\log w=J$).

The situation there with an arbitrary, closed $\Sigma$, where the weight is defined, and corresponding questions of minimax etc.{} are posed, is seemingly more general than ours restricting to intervals.
However, for absolute value maximization etc.~ one can easily reconfigure
the system of conditions by extending $w$ as identically $0$ ($J\equiv -\infty$) outside $\Sigma$
(a modification not spoiling the upper semicontinuity condition postulated in (ii)
in the mentioned Definition I.1.1 in view of the assumption that $\Sigma$ is closed).

So, the logarithmic potential theory setup considers $\Sigma^c$ as a set
where $w$ is not defined, and invokes still another set $\Sigma_0 \subset \Sigma$,
where it may not vanish, and which set must have positive inner capacity.
The distinction is, however, inessential: the setup corresponds to allowing $J=\log w$
being $-\infty$ on $\Sigma^c \cup (\Sigma\setminus \Sigma_0)$, i.e. altogether on $\Sigma_0^c$.
So our setup, when we allow $X:=J^{-1}(\{-\infty\})$ to be any set which leaves
more than $n$ points in its complement, is already much more general
(as we do not assume positive capacity) for sets $\Sigma_0\subseteq \RR$. The same way, we are more general \emph{in our basic setup} when skipping the condition of upper semicontinuity of $J$ or $w$, assumed in (ii) of the said definition
(and throughout logarithmic potential theory). Needless to say, $\log|\cdot|$ is just one particular example of concave (and also singular and strictly monotone) kernel functions.

Altogether, our setup is more general than that of weighted approximation theory
and logarithmic potential theory, so we can expect general results which
have direct consequences in various special settings of the latter, like e.g. Chebyshev
constants of perfect sets, Bojanov type approximation theorems etc.
On the other hand it is clear that Fenton theory, at least in its current degree of development,
cannot deal with the most important questions of asymptotic nature
when some limits are taken with respect to the number of translates etc. These asymptotic questions form the most important goals of logarithmic potential theory, and it is natural that for handling them some additional conditions need to be required. We have seen that Fenton's approach can give surprisingly precise results in a greater generality---for the minimax, equioscillation and majorization questions studied for given $n$ and coefficients $\nu_i$. However, we have not addressed asymptotic questions of potential theory with this approach. There is such a chapter of general potential theory, see, e.g., \cite{choquet:1953}, \cite{choquet/deny:1956}, \cite{ohtsuka:1961}, \cite{fuglede:1960}, \cite{zorii-lodz,zorii-ukr}, \cite{FN08}, \cite{FR06c}, where even without concavity much of potential theory is developed,
but it remains to future work to connect these and our Fenton setup.

\medskip

Let us point out just one natural question, which might be of the same
crucial importance in the Fenton theory of infinite intervals
as it plays in logarithmic potential theory.
Namely, in the latter the existence of a so-called
Mhaskar-Rakhmanov-Saff 
number  $a_n:=a_n(w)$ is proved, with the property that once we consider a polynomial $p$ of degree
not exceeding $n$ with the given admissible weight $w$ (or $w^n$),
then $\|p\|_w:=\|p w\|_{\infty}$ (or $\| pw^n\|_{\infty}$) equals the supremum norm on $[-a_n,a_n]$.
(This is generally expressed by saying that a weighted polynomial ``lives'' on the
Mhaskar-Rakhmanov-Saff interval $[-a_n,a_n]$.) Taking logarithms of absolute values and
specifying $x_i$ as the zeros of $p$, with taking $\nu_i=1$
this translates to $\sup_{t\in \RR} (\log|p(t)|+\log w(t))= \sup_{t\in [-a_n,a_n]}(\log|p(t)|+\log w(t))$.
Our question is if in general, for arbitrary concave kernels and possibly also very general fields $J$ satisfying \eqref{eq:admissible} this remains in effect?

%

\medskip
Last, but not least, let us mention a question, which goes in a different direction.
Already with the general homeomorphism theorem for singular kernels,
we could deal with \emph{different} kernels $K_j$, not just \emph{different multiples}
$\nu_j K$ of the same base kernel function $K$.
It is hard to formulate a real life analogy, like potential theory, for this case,
but maybe in physical systems where different forces and laws, such as strong forces and gravity etc. superpose,
some heuristics can be found.
At any rate, the question is challenging: How much of the Fenton theory goes through,
and under which (possibly minimally restrictive) conditions?
Note that at several instances we have already formulated general results,
sometimes even dropping concavity, but these partial results did not yet compile into a unified theory.
In particular, minimax and maximin results as well as equioscillation property
of extremal points, can possibly hold true, at least for suitably tailored fields.
In our current work we were aiming at allowing the possibly most general field functions;
but it remains an open problem to clarify whether the basic findings of Fenton's theory can be established also for different kernels under some suitable assumptions.

%
\providecommand{\bysame}{\leavevmode\hbox to3em{\hrulefill}\thinspace}
\providecommand{\MR}{\relax\ifhmode\unskip\space\fi MR }
\providecommand{\MRhref}[2]{%
  \href{http://www.ams.org/mathscinet-getitem?mr=#1}{#2}
}
\providecommand{\href}[2]{#2}

\medskip

\noindent
\hspace*{5mm}
\begin{minipage}{\textwidth}
\noindent
\hspace*{-5mm}Bálint Farkas\\
 School of Mathematics and Natural Sciences,\\
 University of Wuppertal\\
  Gau{\ss}stra{\ss}e 20\\
 42119 Wuppertal, Germany\\
\end{minipage}

\medskip

\noindent
\hspace*{5mm}
\begin{minipage}{\textwidth}
\noindent
\hspace*{-5mm}Béla Nagy\\
 Department of Analysis,\\
 Bolyai Institute, University of Szeged\\
 Aradi vértanuk tere 1\\
  6720 Szeged, Hungary\\
\end{minipage}

\medskip

\noindent
\hspace*{5mm}
\begin{minipage}{\textwidth}
\noindent
\hspace*{-5mm}
Szilárd Gy.{} Révész\\
 Alfréd Rényi Institute of Mathematics\\
 Reáltanoda utca 13-15\\
 1053 Budapest, Hungary \\
\end{minipage}

\end{document}